\newtheorem{remark}{Remark}[section]
\newcommand{\mcThA}{\mathcal{T}_h^\mathcal{A}}
\newcommand{\mcTh}{\mathcal{T}_h}
\newcommand{\mcThcut}{\mathcal{T}_h^{\textrm{cut}}}
\newcommand{\mcThin}{\mathcal{T}_h^{\textrm{in}}}
\newcommand{\mcThag}{\mathcal{T}_h^{\mathrm{ag}}}
\newcommand{\mcFhgh}{\mathcal{F}_h^{\mathrm{gh}}}
\newcommand{\mcFh}{\mathcal{F}_h}
\newcommand{\Vh}{V_h}
\newcommand{\Vhag}{V_h^{\textrm{ag}}}
\newcommand{\Vhagmin}{V_h^{\textrm{ag},-}}
\newcommand{\tn}{|\mspace{-1mu}|\mspace{-1mu}|}
\newcommand{\Phag}{P^{\textrm{ag}}_h}
\title{Stability and conditioning of immersed finite element methods: analysis and remedies}
\author[1,2]{Frits de Prenter\footnote{Corresponding author: f.deprenter@tudelft.nl}}
\author[3]{Clemens Verhoosel}
\author[3]{Harald van Brummelen}
\author[4]{Mats Larson}
\author[5]{Santiago Badia}
\affil[1]{Faculty of Aerospace Engineering, Delft University of Technology, 2600 AA Delft, The Netherlands} 
\affil[2]{Reden -- Research Development Netherlands, 7555 RJ Hengelo (Ov.), The Netherlands} 
\affil[3]{Department of Mechanical Engineering, Eindhoven University of Technology, 5600 MB Eindhoven, The Netherlands} 
\affil[4]{Department of Mathematics and Mathematical Statistics, Ume\aa\ University, 901 87 Ume\aa, Sweden} 
\affil[5]{School of Mathematics, Monash University, Victoria 3800, Australia}
\date{17 August 2022}
\begin{document}

\maketitle

\begin{abstract}
\noindent This review paper discusses the developments in immersed or unfitted finite element methods over the past decade. The main focus is the analysis and the treatment of the adverse effects of small cut elements. We distinguish between adverse effects regarding the stability and adverse effects regarding the conditioning of the system, and we present an overview of the developed remedies. In particular, we provide a detailed explanation of Schwarz preconditioning, element aggregation, and the ghost penalty formulation. Furthermore, we outline the methodologies developed for quadrature and weak enforcement of Dirichlet conditions, and we discuss open questions and future research directions. 
\end{abstract}

\tableofcontents

\clearpage

{\Large\noindent\textbf{Nomenclature}}
\begin{table}[h!]
  \begin{center}
    \label{tab:table1}
    \begin{tabular}{l | l | l}
      Symbol & Description & Introduced\\
      \hline
      $\mathcal{A}$ & ambient or embedding domain & 2.1 \\
      $\Omega$, $\Omega_h$ & problem (of physical) domain, union of active elements & 2.1 \\
      $\partial\Omega_D$, $\partial\Omega_N$ & Dirichlet and Neumann boundary & 2.1 \\
      $n$, $\partial_n = n \cdot \nabla$ & outer normal, normal derivative & 2.1 \\
      $T$, $T_\Omega$ & element, restriction of cut element $T$ to $\Omega$ & 2.1 \\
      $h$, $h_{T_\Omega}$ & element size, generalized thickness of cut element $T$ & 2.1 \\
      $\eta_i$, $\eta$, $\eta^*$ & volume fraction (resp.\ of $T_i$, smallest, threshold) & 3.1 \\
      $\mcThA$, $\mcTh$, $\mcThcut$, $\mcThin$, $\mcThag$ & mesh (resp.\ ambient, active, cut, internal, aggregated) & 2.1 \& 4.3 \\
      $F$, $\mcFh$ & element face, set of element faces & 4.4 \\
      $\phi$, $\{\phi_i\}_{i=1}^N$ & basis function, basis & 2.1 \\
      $p$ & discretization order & 2.1 \\
      $\mathcal{P}_p(T)$ & polynomial space of order $p$ over $T$ & 4.3 \\
      $\mathcal{Q}_p(T)$ & tens.\ prod.\ space of 1D polyn.\ of order $p$ over $T$ & 4.3 \\
      $\Vh$, $\Vhag$ & approximation space, aggregated approximation space & 2.1 \& 4.3 \\
      $N$ & system size & 2.1 \\
      $u$ & analytical solution & 2.1 \\
      $u_h$ & finite element solution (gen.\ corr.\ to $\mathbf{x}$) & 2.1 \\
      $v_h$, $w_h$ & finite dimensional functions (gen.\ corr.\ to $\mathbf{y}$ and $\mathbf{z}$) & 2.1 \& 3.1 \\
      $\pi_h(\cdot)$ & interpolation operator & 4.5 \\
      $a_h(\cdot,\cdot)$, $l_h(\cdot)$ & bilinear operator \eqref{eq:bilinop}, linear operator \eqref{eq:linop} & 2.1 \\
      $s_h(\cdot,\cdot)$ & ghost-penalty operator & 4.4 \\
      $\llbracket \cdot \rrbracket$, $[\cdot]$ & jump operator \eqref{eq:jumpdef}, difference operator \eqref{eq:shpatch} & 4.4 \\
      $\beta$ & Nitsche parameter & 2.1 \\
      $\tau$ & ghost-penalty parameter & 4.4 \\
      $\| \cdot \|_{\Omega}$ & $L^2$-function norm over domain $\Omega$ & 2.1 \\
      $(\cdot,\cdot)_{\Omega}$ & $L^2$-function inner product over domain $\Omega$ & 4.4 \\
      $\| \cdot \|_{a_h}$ & $a_h(\cdot,\cdot)$ operator norm & 2.1 \\
      $\tn \cdot \tn_{\beta}$ & $\beta$-norm (referred to as energy norm in \cite{Prenter2018}) & 4.1 \\
      $\tn \cdot \tn_h$, $\tn \cdot \tn_{h,\bigstar}$ & unstabilized and stabilized energy norm & 4.5 \\
      $\ell^2(N)$ & vector space of size $N$ & 2.1 \\
      $\mathbf{x}$, $\mathbf{y}$, $\mathbf{z}$ & (coefficient) vectors & 2.1 \& 3.1 \\
      $\mathbf{A}$, $\mathbf{b}$ & system matrix, right-hand-side vector & 2.1 \\
      $\mathbf{B}$ & preconditioner & 3.2 \\
      $\| \cdot \|_2$ & Euclidean vector or matrix norm & 2.1 \\
      $\| \cdot \|_{\mathbf{A}}$ & $\mathbf{A}$-matrix vector energy norm & 2.1 \\
      $\kappa(\cdot)$ & condition number & 3.1 % \\
    \end{tabular}
  \end{center}
\end{table}

\clearpage

% introduction
\section{Introduction}

Over the past decades, the finite element method (FEM) has become an essential tool in scientific research and engineering. In its standard form, the finite element method requires the construction of a mesh that fits to the boundary of the considered geometry. For problems of practical interest, such a boundary-fitting mesh is constructed using mesh generators. Automatically generating boundary-fitting meshes can lack robustness, however, in the sense that manual intervention is required to, for example, repair distorted elements or non-matching surfaces. This is particularly the case when the geometry is very complex, such as for (possibly non-water-tight) CAD objects with many patches and trimming curves, geometries presented in the form of scan data, or settings in which frequent remeshing is required (for example in fluid-structure interactions). For such problems, immersed methods have been demonstrated to be capable of establishing a more efficient analysis pipeline, as illustrated by the examples in Figure~\ref{fig:examples}.

The pivotal idea of immersed finite element methods is to embed a complex geometry into a geometrically simple ambient domain, on which a regular mesh can be built easily. Basis functions defined on this ambient-domain mesh are then restricted to the problem geometry, after which the solution is approximated by a linear combination of these restricted basis functions. Although this discretization is conceptually straightforward, non-standard treatment of various aspects (which are discussed below) is required on account of the fact that the mesh does not fit the boundaries of the geometry.
Using a wide variety of techniques to treat these non-standard aspects, the concept of immersed finite element methods has successfully been applied in a broad range of fields, such as solid mechanics~\cite{Duester2008,Schillinger2012,Schillinger2012a,Schillinger2012b,Zander2012,Joulaian2014,Ruess2014,Rank2012,Ruess2013}; shell analysis~\cite{Rank2011,Rank2012,Schmidt2012,Ruess2013,Guo2015,Bauer2017,Guo2017}; interface problems~\cite{Dolbow2009,Schillinger2011,Annavarapu2012,Hautefeuille2012,Wadbro2013}; fluid mechanics~\cite{Massing2014,Schott2014,Hansbo2014,Xu2016,Schott2015,Schott2016,Hsu2016,Hoang2017,Massing2018,Winter2018}; fluid-structure interaction~\cite{Bazilevs2012,Burman2014,Rueberg2014,Massing2015,Kadapa2016,Kadapa2017,Wang2017,Wu2017,Kadapa2018}, in particular fluid-structure interaction for biomedical applications~\cite{Hsu2014,Kamensky2015,Hsu2015,Kamensky2017}; scan-based analysis of both man-made and biological materials~\cite{Yang2011,Ruess2012,Duester2012,Verhoosel2015,Varduhn2016,Elhaddad2017,Duczek2015pore,Wuerkner2018,Hoang2019}; shape and topology optimization~\cite{Parvizian2012,Nadal2013,Dijk2013,Bandara2016,Groen2017,Villanueva2017,Burman2018}; and many more.

\begin{figure}[h!]
  \centering
  \begin{subfigure}{.49\textwidth}
   \centering
   \includegraphics[height=4cm]{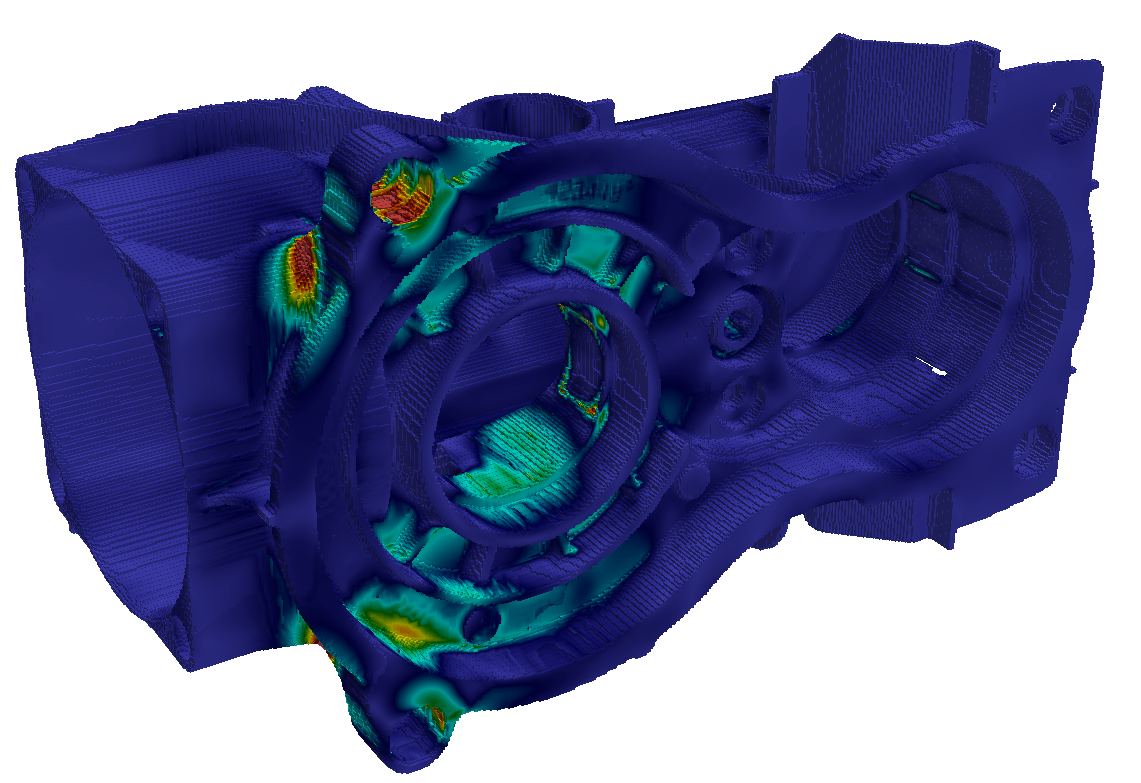}
   \caption{\label{fig:gearbox}}
  \end{subfigure}
  \hfill
  \begin{subfigure}{.49\textwidth}
   \centering
   \includegraphics[height=4.5cm]{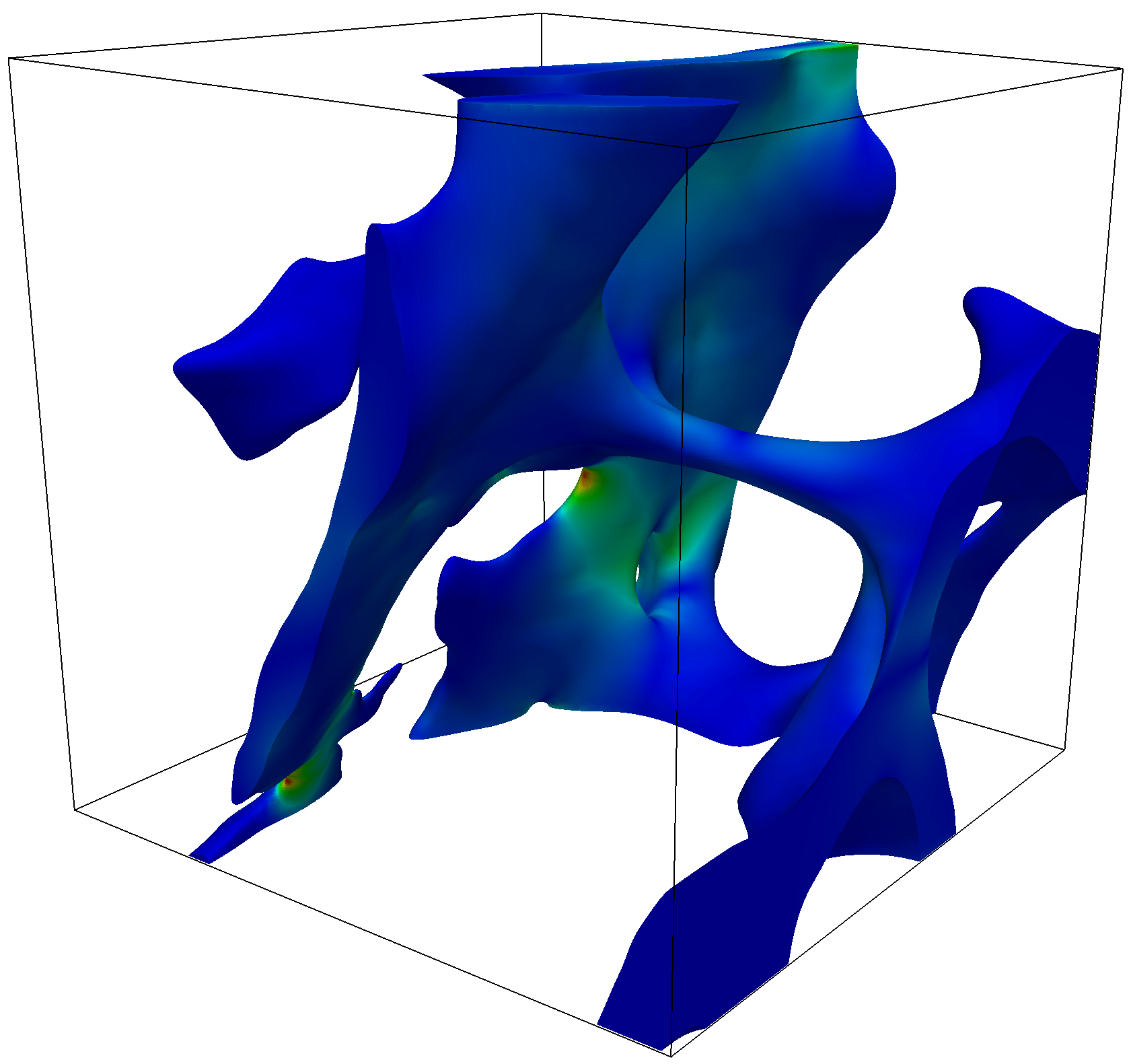}
   \caption{\label{fig:bone}}
  \end{subfigure}
 \caption{Examples of applications of immersed FEM. Figure~(a) shows the stress in an aluminum die cast gearbox housing \cite{Jomo2019}. The geometry is implicitly defined from CT data of a product to investigate stress concentrations around pores. Figure~(b) depicts the stress in a specimen of trabecular bone, rendered from CT data \cite{Prenter2019}.\label{fig:examples}}
\end{figure}

The immersed analysis concept was originally proposed in the context of the finite difference method by Peskin \cite{Peskin1972} in 1972. This immersed boundary method (IBM) and its enhancements have been employed in a wide range of applications ever since (see, \emph{e.g.}, Ref.~\cite{Mittal2005} for a contemporary review). The application of the immersed element concept in the finite element setting can be traced back to the work on the partition of unity method \cite{Melenk1996,Babuska1997}, generally referred to as the generalized or extended finite element method (GFEM \cite{Duarte2000,Strouboulis2000} or XFEM \cite{Belytschko1999,Moes1999}), where elements are cut in order to construct enrichment functions. The concept of cutting finite elements as an unfitted meshing technique was pioneered by Hansbo \cite{hansbo2002unfitted}. This work can be considered as the first instance of an immersed finite element method. The pace in the development and impact of immersed finite element methods increased significantly with the introduction of the finite cell method (FCM) \cite{Parvizian2007,Duester2008,Rank2012,SchillingerRuess2015}, which combines the cut element concept with higher-order basis functions, and CutFEM \cite{BurmanHansbo2012,Hansbo2014,Massing2014,Burman2015,Burman2018codim}, which generally employs the ghost penalty to enhance numerical stability \cite{Burman2010,BurmanHansbo2012}. Besides these prominent immersed FEM techniques, other notable examples are the aggregated finite element method (AgFEM) \cite{Badia2018,Badia2018,badia2021linking}, the Cartesian grid finite element method (cgFEM) \cite{Nadal2013,Marco2015}, weighted extended B-splines (WEB-splines) \cite{Hoellig2001,Hoellig2005} and immersed B-splines (i-splines) \cite{Sanches2011}. Discontinuous Galerkin methods can readily be used on cut meshes after aggregation of cells, since they can be posed on polytopal meshes \cite{Johansson2012}. In recent years, the immersed analysis concept has been considered in conjunction with isogeometric analysis \cite{Hughes2005,Marussig2018Review} (often referred to as iga-FCM \cite{Rank2012} or immersogeometric analysis \cite{Kamensky2015}). In this setting, immersed methods have been demonstrated to be capable of leveraging the advantageous properties of the spline basis functions used in isogeometric analysis, while enhancing the versatility of the simulation workflow for cases where boundary-fitting spline geometries are not readily available, \emph{e.g.}, in scan-based analyses.

Immersed finite element methods are typically confronted by three computational challenges in comparison to standard mesh-fitting finite elements, \emph{viz.}: \emph{i)} the numerical evaluation of integrals over cut elements; \emph{ii)} the imposition of (essential) boundary conditions over the immersed or unfitted boundaries; and \emph{iii)} the stability of the formulation in relation to small cut elements. A myriad of advanced techniques has been developed over the past decades to resolve these challenges, which has made immersed finite element techniques a competitive simulation strategy for a wide range of problems. This review focuses on the third challenge, \emph{i.e.}, the effects of small cut elements on the performance of immersed finite element methods. We restrict ourselves to a high-level consideration of the first two of these challenges, and we refer the reader to, \emph{e.g.}, Ref.~\cite{SchillingerRuess2015} for a detailed review on these topics. 

Small cut elements typically give rise to stability and conditioning problems. In this article we review three prominent methods to resolve these problems, which have been developed in recent years, \emph{viz.}: (Schwarz) preconditioning, ghost-penalty stabilization (commonly used in CutFEM), and aggregation of cut elements (commonly used in AgFEM). In the discussion of these techniques, it is important to note that small cut elements do not only affect the conditioning of the problem, but also the accuracy of the solution. More specifically, direct application of Nitsche’s method for the weak imposition of essential boundary conditions requires a Nitsche parameter that scales inversely proportional with the size (in particular the thickness) of the cut element. As observed in, \emph{e.g.}, \cite{Prenter2018}, the unboundedness of this parameter can deteriorate the accuracy of the solution. When using preconditioning techniques to resolve the small-cut-element problem, it is important to realize that these techniques do not resolve this potential issue regarding the accuracy. In contrast to preconditioning techniques, ghost penalty stabilization and aggregation ensure well-posedness with a Nitsche parameter inversely proportional to the ambient-domain mesh size, in addition to controlling the condition number. This makes the immersed finite element approximation using these stabilization techniques robust with respect to the cut element configurations, and preserves the error estimates of boundary-fitted finite element methods. It should be noted that, besides ghost penalty stabilization and aggregation, several other techniques have been developed to resolve the stability problems on small cut elements. However, in general, these do not simultaneously resolve the conditioning problem. An overview of these techniques is also presented in this review.

This review article has three objectives, \emph{viz.}: \emph{(i)} to clarify that the small-cut-element problem is multi-faceted; \emph{(ii)} to present the different techniques in an accessible theoretical framework, so that the implications of practical choices become apparent to a non-expert audience; and \emph{(iii)} to provide a comparison of the techniques for conditioning and stabilization of immersed finite element methods. With the myriad of immersed finite element techniques available, naturally comes the luxury problem of choosing which technique is most suitable in a particular situation. The rigorous theoretical underpinning of the methods as reviewed in this article is instrumental to aiding in the selection of a particular method, as it provides a fundamental understanding of the relation between small cut elements, conditioning, and stability and accuracy.

It should be noted that immersed methods are not the only techniques to create a robust workflow to deal with complex or implicitly defined geometries. One alternative is the shifted boundary method \cite{Main2018II,Main2018I,Atallah2020,Atallah2021}. The shifted boundary method aims to replace an immersed problem with a similar boundary-fitted problem on the interior element mesh, by projecting boundary conditions from the real (unfitted) boundary to the interior element boundaries. This concept was already introduced in~\cite{Bramble1972}, and is also applied 
in~\cite{Burman2018}. This method bypasses the aforementioned three computational challenges of immersed finite element methods, but instead introduces other 
challenges, such as a non-trivial treatment of boundary conditions (including the projection of the boundary data), and a non-obvious geometrical treatment. Another approach is to use hybridizable techniques on unfitted meshes~\cite{burman2018hho,badia2021conditioning}. Hybridizable methods can naturally be posed on polytopal meshes, giving additional geometrical flexibility compared to standard finite element methods. As a result, these methods can readily be used on the meshes obtained after the intersection of the boundary representation and background mesh and possibly after aggregation of elements. The impact of small cut elements and small cut faces (these schemes add unknowns on the mesh skeleton) on stability and condition numbers has only been studied very recently in the context of hybrid high-order methods, see~\cite{badia2021conditioning}. A detailed discussion of these alternative techniques is beyond the scope of this work.

This article is structured as follows. In Section~\ref{sec:immersedMethods} we introduce the basic formulation based on a model problem and discuss the developments regarding the three computational challenges associated with immersed finite element methods. In Section~\ref{sec:conditioningFull} a compact analysis of the ill-conditioning problem is presented, and Schwarz preconditioning is discussed as a natural technique to resolve this problem. Section~\ref{sec:stabilization} then considers stabilization techniques, specifically the ghost-penalty method and aggregation technique, and presents a theoretical framework required to analyze the stability and conditioning properties of these techniques. A discussion on the current state of the field and concluding remarks are finally presented in Section~\ref{sec:discussion}. Note that the results presented in this manuscript are reproduced from previous publications by the authors, which are referenced in the text or in the captions.

% immersed fem
\vspace{1mm}

\section{Immersed finite element methods}\label{sec:immersedMethods}

\vspace{1mm}

In this section we introduce the immersed finite element framework. Section~\ref{sec:method_formulation} presents the concept of immersed finite element methods and specifies the formulations based on a model problem. In Section~\ref{sec:method_challenges} the most prominent challenges of immersed finite element methods, compared to standard boundary-fitted finite element methods, are discussed.

\vspace{1mm}

\subsection{Formulations}\label{sec:method_formulation}

\vspace{1mm}

\begin{figure}
     \centering
     \begin{subfigure}[b]{.4\textwidth}
         \centering
          \begin{tikzpicture}
           \node[anchor=south west] at (0mm,0mm) {\includegraphics[height=56mm]{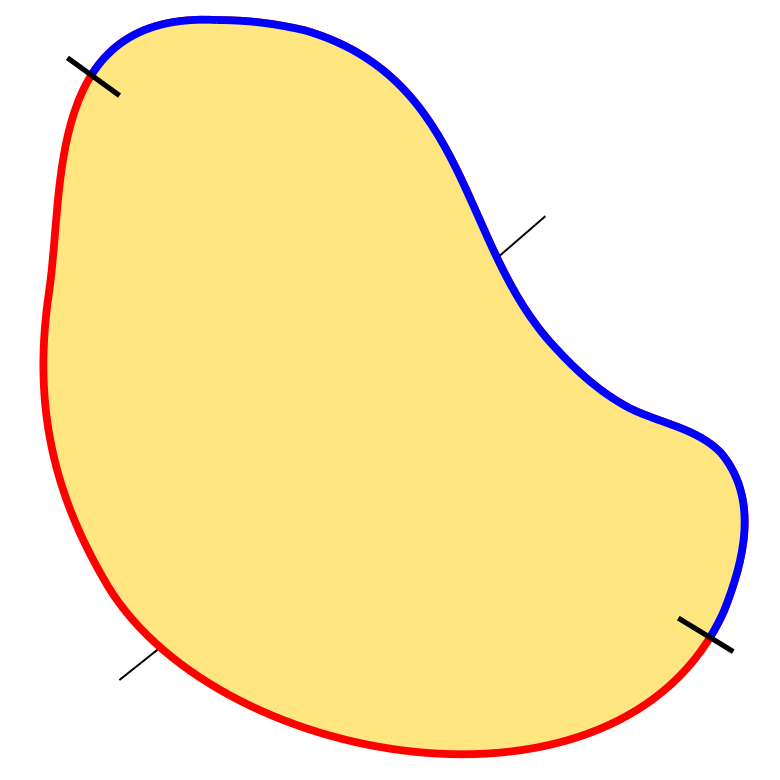}};
           \node at (6.6mm,5.8mm) {$\partial\Omega_N$};
           \node at (44mm,44mm) {$\partial\Omega_D$};
           \node at (26.4mm,26.4mm) {$\Omega$};
          \end{tikzpicture}      
         \caption{Physical domain $\Omega$ with Dirichlet boundary $\partial\Omega_D$ and Neumann boundary $\partial\Omega_N$}
         \label{fig:domainOmega}
     \end{subfigure}
     \hspace{20mm}
     \begin{subfigure}[b]{.4\textwidth}
         \centering
          \begin{tikzpicture}
           \node[anchor=south west] at (0mm,0mm) {\includegraphics[height=56mm]{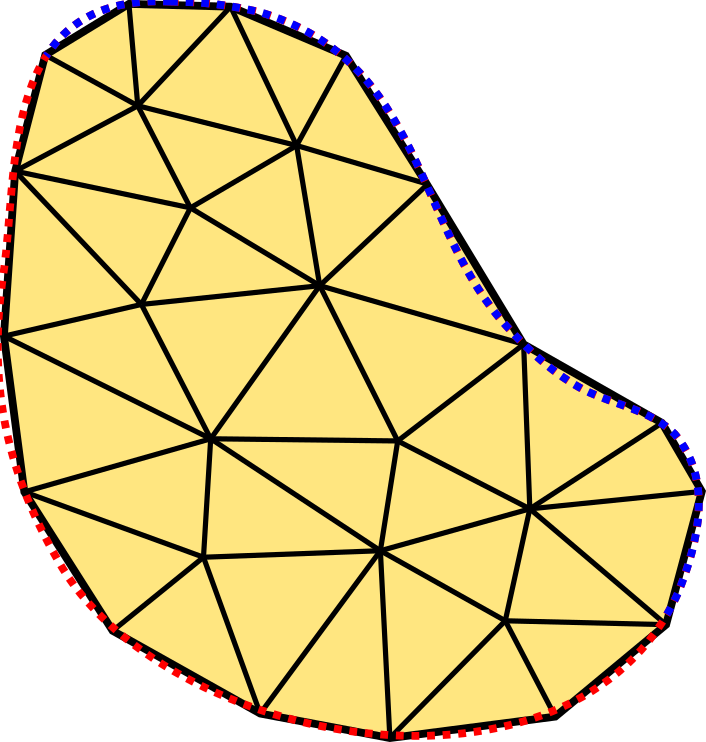}};
           \node at (7.2mm,25mm) {$T_i$};
           \draw[|-|, thin] (37.6mm,43.2mm) -- (43.6mm,33.2mm) node [black,midway,yshift=2mm,xshift=2mm,rotate=0] {$h$};
          \end{tikzpicture}      
         \caption{Boundary-fitted discretization $\mcTh$ that approximates the geometry $\Omega$}
         \label{fig:boundaryFittingDiscretization}
     \end{subfigure}
     \\
     \begin{subfigure}[b]{.4\textwidth}
         \centering
          \begin{tikzpicture}
           \node[anchor=south west] at (0mm,0mm) {\includegraphics[height=56mm]{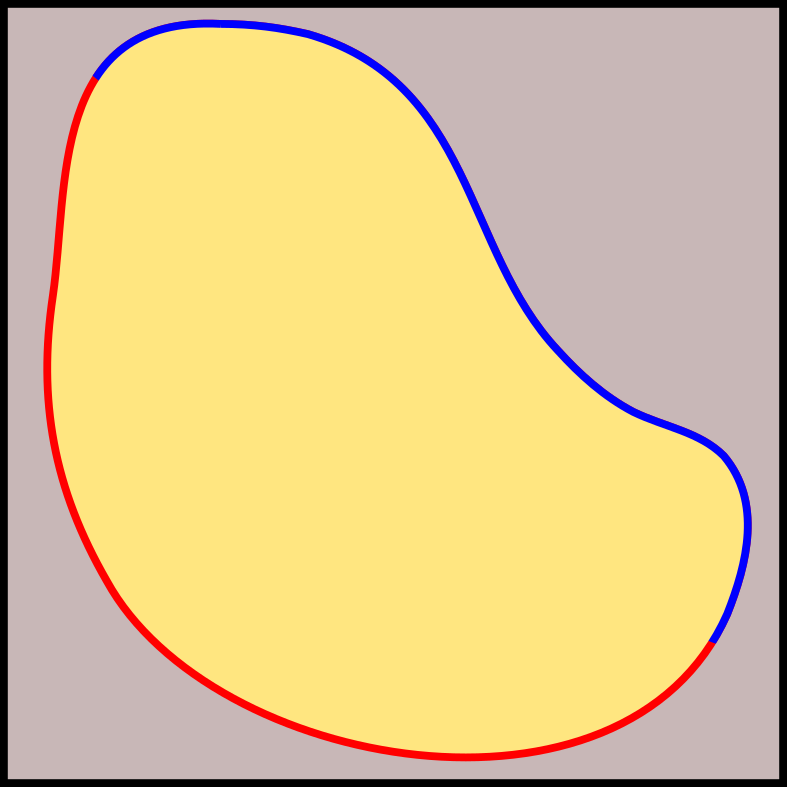}};
           \node at (26.4mm,26.4mm) {$\Omega$};
           \node at (54mm,4mm) {$\mathcal{A}$};
          \end{tikzpicture}      
         \caption{Embedding of the physical domain $\Omega$ in the ambient domain $\mathcal{A}$}
         \label{fig:embeddingAmbient}
     \end{subfigure}
     \hspace{20mm}
     \begin{subfigure}[b]{.4\textwidth}
         \centering
          \begin{tikzpicture}
           \node[anchor=south west] at (0mm,0mm) {\includegraphics[height=56mm]{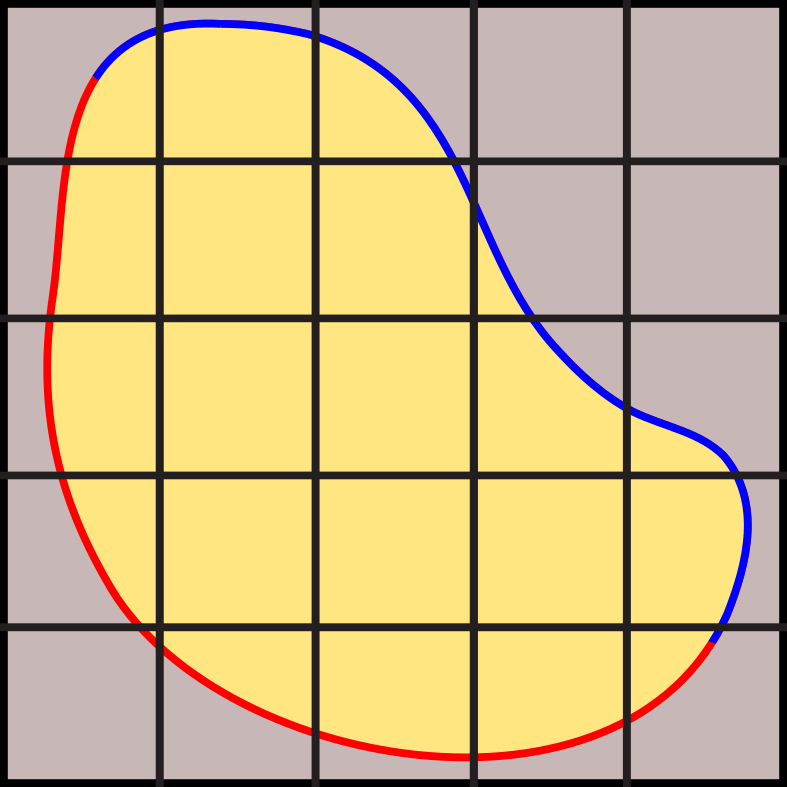}};
           \node at (40mm,51.2mm) {$T_1$};
           \node at (17.6mm,28.8mm) {$T_2$};
           \node at (28.8mm,4.8mm) {$T_3$};
           \draw[|-|, thin] (60mm,1.2mm) -- (60mm,12.4mm) node [black,midway,yshift=0mm,xshift=3mm,rotate=0] {$h$};
          \end{tikzpicture}      
         \caption{Immersed discretization with $\{T_1,T_2,T_3\}\subset\mcThA$, $\{T_2,T_3\}\subset\mcTh$, and $T_3\in\mcThcut$}
         \label{fig:immersedDiscretization}
     \end{subfigure}
\caption{Schematic representation the domain $\Omega$, a boundary-fitted discretization, the embedding in $\mathcal{A}$, and an unfitted discretization.}
\label{fig:poisson}
\end{figure}

As a model problem, we consider the Poisson equation on the domain $\Omega \subset \mathbb{R}^{d}$, with $d\in\{2,3\}$ being the number of spatial dimensions (Figure~\ref{fig:poisson}). The domain $\Omega$ has a boundary $\partial \Omega$ with outward-pointing unit normal vector $n$. The boundary consists of complementary parts $\partial\Omega_D$ and $\partial\Omega_N$ on which Dirichlet (or essential) and Neumann (or natural) conditions are prescribed with boundary data $g_D$ and $g_N$, respectively. The field variable $u:\Omega \rightarrow \mathbb{R}$ is subject to the strong formulation
\vspace{1mm}\begin{equation}
\text{(Strong)} \begin{cases}
  -\Delta u = 0 & \text{in } \Omega\\
  u = g_D & \text{on } \partial\Omega_D\\
  \partial_n u = g_N & \text{on } \partial \Omega_N
 \end{cases}
 \label{eq:poissonstrong}
\end{equation}\vspace{1mm}
where $\partial_n= n \cdot \nabla$ denotes the normal gradient operator.

In boundary-fitted finite elements, the domain $\Omega$ is subdivided into elements $T$ (generally at the expense of a geometrical error), which together comprise the mesh $\mcTh$ (Figure~\ref{fig:boundaryFittingDiscretization}). The size of element $T$ is denoted by $h_T$ and a global mesh size parameter is defined as $h = \mathrm{max}_{T\in\mcTh} h_T$. This manuscript only considers quasi-uniform discretizations, that is $h_T \approx h$ $\forall T \in \mcTh$. On the mesh, shape function are introduced, which form the basis of the approximate solution $u_h$. Integration is performed via standard quadrature rules for polynomials on simplices or hypercubes. Homogeneous Dirichlet conditions are usually imposed strongly by removing the basis functions with support on $\partial \Omega_D$. The span of the remaining basis functions is referred to as $V_{h,0} \subset H^1_0(\Omega)$. The Bubnov-Galerkin finite element method imposes inhomogeneous boundary conditions through a so-called lifting function, $\ell(g_D) \in H^1(\Omega)$, which is generally constructed using the removed basis functions. The Bubnov-Galerkin finite element formulation corresponding to the strong form \eqref{eq:poissonstrong} can then be condensed into
\begin{equation}
 \text{(Boundary-fitted FE)}\begin{cases}
  \text{find } u_h = w_h + \ell(g_D) \text{ for }  w_h \in V_{h,0}  \text{ such that:}\\
 a(w_h,v_h) = l(v_h) \qquad \forall v_h \in V_{h,0}
 \end{cases}
 \label{eq:fittingfem}
\end{equation}
where the symmetric bilinear form $a(w_h,v_h)=\int_\Omega \nabla w_h \cdot \nabla v_h \, \text{d}V$ is continuous and coercive on $H^1_0(\Omega)$, and the linear form $l(v_h)=\int_{\partial\Omega_N}  g_N v_h \, \text{d}S-\int_\Omega \nabla \ell(g_D) \cdot \nabla v_h \, \text{d}V$ is also continuous on $H^1_0(\Omega)$. These conditions are sufficient to guarantee well-posedness of the problem \eqref{eq:fittingfem} to find $w_h \in H^1_0(\Omega)$ and $u_h \in H^1(\Omega)$ for sufficiently smooth boundary data \cite{ErnGuermond,Evans}. Because of the coercivity of the bilinear form, it induces the operator norm $\| v_h \|^2_{a} = a(v_h, v_h)$, which is identical to the $H^1_0(\Omega)$-seminorm for the boundary-fitted finite element formulation of the Poisson problem.

In immersed finite element methods, the domain $\Omega$ is embedded into an ambient domain $\mathcal{A}$ (Figure~\ref{fig:embeddingAmbient}). Instead of generating a boundary-fitted partitioning of the domain $\Omega$, the ambient domain is partitioned by the background mesh $\mcThA$ (Figure~\ref{fig:immersedDiscretization}). Since the ambient domain is geometrically simple, mesh generation is trivial. But, as the element boundaries do not coincide with the boundaries of the domain $\Omega$, evaluating integrals requires dedicated procedures (see Section~\ref{sec:quadrature}). Hence, the complexity of the geometry is essentially captured by the integration procedure, instead of by the mesh generation. This geometrical simplification also has implications on the Galerkin formulation.

To specify the immersed finite element formulation, we define the active mesh $\mcTh$ as the set of elements $T$ which intersect the domain $\Omega$, \emph{i.e.},
\begin{align}
 \mcTh = \{  T \in \mcThA \mid T_\Omega \neq \emptyset \}
 \label{eq:activemesh}
\end{align}
where $T_\Omega = T \cap \Omega$.
Furthermore, the active domain is defined as the union of all the active elements $\Omega_h = \cup_{T \in \mcTh} T \supseteq \Omega$. The set of active elements that are cut by (and therefore intersect) the boundary is defined as
\begin{align}
 \mcThcut = \{ T \in \mcTh \mid  T \setminus T_\Omega \neq \emptyset \}
\end{align}
and the set of internal elements that are fully supported on the domain $\Omega$ is defined as
\begin{align}
 \mcThin = \{ T \in \mcTh \mid  T \subset \Omega \}
\end{align}

Analogous to the boundary-fitted case, shape functions are introduced on the active mesh $\mcTh$ and an approximate solution $u_h$ is formed as a linear combination of the restriction of the shape functions to $\Omega$. The space spanned by these functions is referred to as $V_h$. In contrast to the boundary-fitted finite element formulation, in the unfitted setting it is generally not feasible to create a finite dimensional subspace of $V_{h,0} \subset H^1_0(\Omega)$, which is needed to strongly impose Dirichlet conditions. Instead, Dirichlet conditions are generally imposed weakly. The most common approach for weakly enforcing Dirichlet boundary conditions is via Nitsche's method \cite{Nitsche1971}, which gives rise to the immersed finite element formulation
\begin{equation}
\text{(Immersed FE)}\begin{cases}
  \text{find } u_h \in V_{h}  \text{ such that:}\\
 a_h(u_h,v_h) = l_h(v_h) \qquad \forall v_h \in V_{h}
 \end{cases}
 \label{eq:immersedfem}
\end{equation}
where the bilinear and linear form are defined as
\begin{subequations}
\begin{align}
a_h(u_h,v_h) &= \int_{\Omega} \nabla u_h \cdot \nabla v_h {\rm d}V + \int_{\partial\Omega_D} \left(\beta u_h v_h - u_h \partial_n v_h - v_h \partial_n u_h\right) {\rm d}S\label{eq:bilinop}\\
l_h(v_h) &= \int_{\partial\Omega_N} g_N v_h {\rm d}S + \int_{\partial\Omega_D} \left(\beta g_D v_h - g_D \partial_n v_h\right) {\rm d}S\label{eq:linop}
\end{align}
\label{eq:immersedOperator}%
\end{subequations}
The parameter $\beta$, commonly referred to as the Nitsche or penalty parameter, must be chosen large enough to ensure coercivity of the bilinear form $a_h(\cdot,\cdot)$ in the discrete space $V_h$. A sufficient condition is $\beta > \max_{v_h \in V_{h}} \| \partial_n v_h \|_{\partial\Omega_D}^2/\| \nabla v_h \|_{\Omega}^2$, which can be computed by solving a generalized eigenvalue problem \cite{Embar2010}. This can result in arbitrarily high values of $\beta$ over the entire boundary $\partial\Omega_D$. It is therefore common to select a local, element-wise, Nitsche parameter satisfying $\beta|_T > \max_{v_h \in V_{h}|_T} \| \partial_n v_h \|_{T\cap\partial\Omega_D}^2/\| \nabla v_h \|_{T_\Omega}^2$ by solving a local generalized eigenvalue problem for each element. Based on dimensional considerations, one can infer that~$\beta|_T$ should be inversely proportional to a generalized thickness, $h_{T_\Omega}$, of $T_\Omega$ normal to $T\cap\partial\Omega_D$ \cite{Warburton2003,Evans2013}. We refer to $h_{T_\Omega}$ as a \emph{generalized} thickness, because $T_\Omega$ can be irregularly shaped and, consequently, a well-defined length scale cannot generally be provided. While, with an element-wise parameter, a single small cut element does not result in a high value of $\beta$ over the entire boundary, an element-wise parameter is still locally unbounded, which can lead to numerical issues \cite{Prenter2018}. With appropriate stabilization (see Section~\ref{sec:stabilization}), the properties of the immersed formulation revert to those of the boundary-fitted setting considered in the original paper by Nitsche \cite{Nitsche1971}, and a global parameter inversely proportional to the mesh width of the background mesh suffices. In the remainder of this manuscript, both element-wise and global Nitsche parameters will be indicated by $\beta$ and the notation $a_h(\cdot,\cdot)$ and $l_h(\cdot)$ will be used in both stabilized and unstabilized formulations, with the choice of the Nitsche parameter following from the context. Similar to the boundary-fitted setting, the coercive and bounded bilinear form~$a_h$ induces an equivalent operator norm $\| v_h \|^2_{a_h} = a_h(v_h, v_h)$ in the discrete space $V_h$. This operator norm coincides with the matrix energy norm of a corresponding coefficient vector, which renders it useful in the analysis of condition numbers of linear-algebraic systems emerging from immersed formulations.

A myriad of immersed finite element variants to the problem \eqref{eq:immersedfem}-\eqref{eq:immersedOperator} exist, the most prominent of which will be discussed in Section~\ref{sec:dirichlet}. A particularly noteworthy variation to \eqref{eq:immersedOperator} is the penalty method, which corresponds to the case where the terms that involve normal gradients in the above-mentioned operators are left out. This makes the method formally inconsistent with the strong form \eqref{eq:poissonstrong}, but simplifies the implementation and ensures coercivity independent of the parameter $\beta$. In this review, we do not discuss the penalty method in detail and focus on the application of Nitsche's method. In general, the use of the penalty method instead of Nitsche's method has a negligible impact on the conditioning, but does affect the accuracy of the solution.

To solve the immersed finite element formulation \eqref{eq:immersedfem} numerically, it is recast as a linear algebra problem
\begin{align}
\mathbf{A} \mathbf{x}  = \mathbf{b}
\label{eq:linearsystem}
\end{align}
where the components of the matrix $\mathbf{A} \in \mathbb{R}^{N \times N}$ and right hand side vector $\mathbf{b} \in \mathbb{R}^N$ correspond to
\begin{align}
\mathbf{A}_{ij} &= a_h(\phi_j,\phi_i)  &   b_{i} &= l_h(\phi_i)
\label{eq:matrixcomponents}
\end{align}
with $\phi_i$ the $i$-th basis function and $N$ the number of dimensions of the finite dimensional function space $V_h$. The approximate solution $u_h$ is given by
\begin{equation}
 u_h = \sum_i x_i\phi_i 
\end{equation}
with $x_i$ the components of the coefficient vector $\mathbf{x} \in \ell^2(N)$ in equation \eqref{eq:linearsystem}. In the remainder we employ two norms for this coefficient vector, \emph{viz.} the $\ell^2$ vector norm $\| \mathbf{x} \|_2^2 = \mathbf{x}^T \mathbf{x}$ and, as $\mathbf{A}$ is Symmetric Positive Definite (SPD), the matrix energy norm
\begin{align}
  \| \mathbf{x} \|_{\mathbf{A}}^2 = \mathbf{x}^T \mathbf{A} \mathbf{x}
\end{align}
Note that, on account of \eqref{eq:matrixcomponents}, this matrix energy norm is equal to the operator norm, \emph{i.e.}, $\| \mathbf{x} \|_{\mathbf{A}} = \| u_h \|_{a_h}$.

In the remainder of this manuscript we focus our presentation on the single field Poisson problem \eqref{eq:poissonstrong}, discretized by quasi-uniform meshes with $C^0$-continuous piecewise polynomial basis functions of order $p$. The presented analyses and methods naturally extend to vector-valued problems that can be expressed as a Cartesian product of scalar fields (\emph{i.e.}, one field per space dimension). Mixed formulations and discretizations with local refinements are not discussed in detail, but, unless otherwise specified, the provided insights also carry over mutatis mutandis to these cases. Curl-conforming and div-conforming problems (\emph{i.e.}, posed in $H(\text{curl};\Omega)$ or $H(\text{div};\Omega)$, respectively), suffer from similar problems with stability and conditioning on small cut elements as the grad-conforming problems (posed in $H^1(\Omega)$) considered here. With respect to conditioning, maximum continuity splines as commonly used in isogeometric analysis \cite{Hughes2005,Cottrell2009} can behave differently from $C^0$-continuous bases. Therefore, Section~\ref{sec:conditioningFull} also considers B-spline bases as a special case. 

\subsection{Challenges in immersed finite elements}\label{sec:method_challenges}
Although the immersed finite element formulation introduced above fits within the general framework of the conventional finite element method, the application of immersed finite elements involves the consideration of various specific challenges. In this section we discuss the most prominent of these, \emph{viz.}: \emph{(i)} numerical evaluation of integrals over cut elements, \emph{(ii)} imposition of Dirichlet boundary conditions, and \emph{(iii)} stability and conditioning of the formulation.

%Challenge: Cut element integration
% !TeX root=Section2.tex
\subsubsection{Cut-element integration}\label{sec:quadrature}

In boundary-fitted FEM, integrals over the domain $\Omega$ are split into element-wise integrals. The elements generally correspond to polygons such as simplices or hypercubes, and the integrands are usually element-wise polynomials. For this reason, standard quadrature rules can readily be used. Integration in immersed methods is more involved, as the integration procedure should adequately approximate integrals on, in principle, arbitrarily shaped cut elements. This problem closely relates to the special treatment of discontinuous integrands in enriched finite element methods such as XFEM and GFEM.

In immersed finite element methods the geometry representation is independent of the mesh. In general, there are two ways to represent the geometry, \emph{viz.}\ implicit representations (\emph{e.g.}, voxel data \cite{Elhaddad2017,Korshunova2020} or a level set function \cite{Verhoosel2015}) and explicit boundary representations (\emph{e.g.}, spline surfaces \cite{Schillinger2012}, B-rep objects \cite{Rank2012}, or isogeometric analysis on trimmed CAD objects \cite{Schmidt2012,Ruess2014}). For all geometry representations, dedicated techniques are required to evaluate volume integrals over the intersection of active elements with the domain. A myriad of such integration procedures has been developed over the years in the context of immersed FEM (see \cite{Abedian2013,SchillingerRuess2015} for reviews/comparisons) and enriched FEM (see \cite{Fries2010}), an overview of which is presented below. Techniques to integrate over trimmed boundaries are strongly dependent on the geometry description. In explicit representations, the geometry description itself can sometimes be leveraged, whereas generally a boundary-reconstruction procedure is required in the case of implicit boundary representations. The reader is referred to, \emph{e.g.}, Refs.~\cite{SchillingerRuess2015} for a discussion regarding the various techniques to handle different geometry representations, and the related problem of integrating over unfitted boundaries.

\begin{figure}
  \centering
  \includegraphics[width=0.3\textwidth]{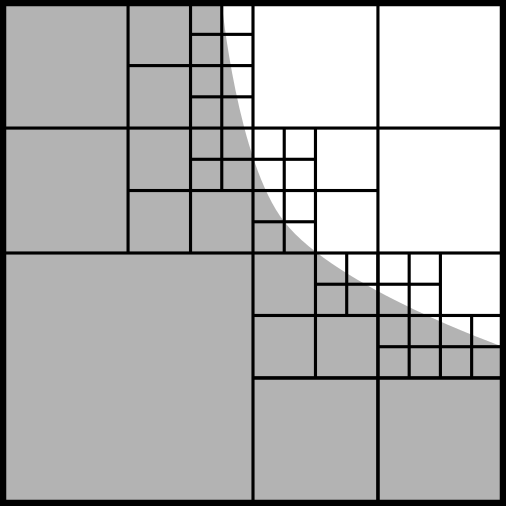}
  \vspace{3mm}
  \caption{Illustration of the octree procedure to integrate cut elements, in which integration sub-cells that intersect with the immersed boundary are recursively bisected.}
  \label{fig:octree}
  \vspace{2mm}
\end{figure}

The cut element volume integration techniques can be categorized as:
\begin{itemize}
\item \textit{Octree subdivision:} The general idea of octree (or quadtree in two dimensions) integration is to capture the geometry of a cut element by recursively bisecting sub-cells that intersect with the boundary of the domain, as illustrated in Figure~\ref{fig:octree}. At every level of this recursion, sub-cells that are completely inside the domain are preserved, while sub-cells that are completely outside of the domain are discarded. This cut element subdivision strategy was initially proposed in the context of the finite cell method (FCM) in \cite{Duester2008} and is generally appraised for its simplicity and robustness with respect to cut element configurations. Octree integration has been widely adopted in immersed FEM, see, \emph{e.g.}, \cite{SchillingerRuess2015,Duester2017,Kamensky2015,Verhoosel2015}. Various generalizations and improvements to the original octree procedure have been proposed, of which the consideration of tetrahedral cells  \cite{Varduhn2016,Stavrev2016}, the reconstruction of the unfitted boundary by tessellation at the lowest level of bisectioning \cite{Verhoosel2015}, and the consideration of variable integration schemes for the sub-cells \cite{Abedian2013}, are particularly noteworthy. Despite the various improvements to the original octree strategy, a downside of the technique remains the number of integration sub-cells (and consequently the number of integration points) that result from the procedure, especially in three dimensions and with high-order bases, where the refinement depth needs to be increased under mesh refinement to reduce the integration error with the same rate as the approximation error \cite{Divi2020}.
 
\item \textit{Cut element reparameterization:} Accurate cut element integration schemes can be obtained by modifying the geometry parameterization of cut elements in such a way that the immersed boundary is fitted. This strategy was originally developed in the context of XFEM by decomposing cut elements into various sub-cells with only one curved side and then to alter the geometry mapping related to the curved sub-cell to obtain a higher-order accurate integration scheme \cite{Fries2010}. This concept has been considered in the context of implicitly defined geometries (level sets) \cite{Fries2016,Fries2017,Omerovic2017,Fries2018}, the NURBS-enhanced finite element method (NEFEM) \cite{Sevilla2008,Sevilla2011}, the Cartesian grid finite element method (cgFEM) \cite{Nadal2013,Marco2015}, and the mesh-transformation methodology presented in~\cite{Lehrenfeld2016}. In the context of the finite cell method, the idea of cut element reparameterization has been adopted as part of the \emph{smart octree integration} strategy \cite{Kudela2015,Kudela2016,Hubrich2017}, where a boundary fitting procedure is employed at the lowest level of octree bisectioning in order to obtain higher-order integration schemes for cut elements with curved boundaries. Reparameterization procedures have the potential to yield accurate integration schemes at a significantly lower computational cost than octree procedures, but generally compromise in terms of robustness with respect to cut element configurations.

\item \textit{Polynomial integration}: Provided that one can accurately evaluate integrals over cut elements (for example using octree integration), it is possible to construct computationally efficient integration rules for specific classes of integrands. In the context of immersed finite element methods, it is of particular interest to derive efficient cut-element integration rules for polynomial functions. The two most prominent methods to integrate polynomial functions over cut elements are \emph{moment fitting techniques} \cite{Mousavi2011,Joulaian2016,Hubrich2017,Hubrich2019,Duester2020}, in which integration point weights and (possibly) positions are determined in order to yield exact quadrature rules, and \emph{equivalent polynomial methods} \cite{Ventura2006,Abedian2019}, in which a non-polynomial (\emph{e.g.}, discontinuous) integrand is represented by an equivalent polynomial which can then be treated using standard integration procedures. Such methods have been demonstrated to yield efficient quadrature rules for a range of scenarios. A downside of such techniques is the need for the evaluation of the exact integrals (using an adequate cut-element integration procedure) in order to determine the optimized integration rules. This can make the construction of such quadrature rules computationally expensive, which makes them more suitable in the context of time-dependent and non-linear problems (with fixed boundaries and interfaces), for which the construction of the integration rule is only considered as a pre-processing operation (for each cut element) and the optimized integration rule can then be used throughout the simulation. In the group of Dominik Schillinger at TU Darmstadt, work is currently being done to use neural networks for the computation of integration point weights to accelerate this process. 

\item \textit{Dimension-reduction of integrals}:
Depending on the problem under consideration, it can be possible to reformulate volumetric integrals over cut elements by equivalent lower-dimensional integrals. This approach is advantageous from a computational effort point of view, as the equivalent integrals are generally less costly to evaluate. A reformulation of volume integrals in terms of boundary integrals has been proposed in the context of XFEM in \cite{Ventura2009} and in the immersed FEM setting in \cite{Jonsson2017}. Another dimension-reduction approach pertains to high-order quadrature for embedded methods with hexahedral background meshes and implicitly defined convex surfaces \cite{Saye2015}. This technique relies on a reduction of integrals to one-dimensional integrations and provides strictly positive weights. The methodology proposed in \cite{Chin2020} (and references therein) provides closed-form formulas for the integration of monomials on convex and nonconvex polyhedra and the extension to curved domains, relying on a reduction of integrals up to vertex evaluations. A downside of dimension-reduction techniques is that they are less general than standard quadrature rules. For example, some techniques can lead to negative quadrature weights in the case of high-order FEM \cite{Longva2020}, which can potentially cause instabilities with inexact arithmetics.

\item \textit{Parameter optimization}: Various strategies have been proposed to optimize the parameters of the cut element integration techniques discussed above, most notably for octree subdivision techniques. In \cite{Abedian2013,Divi2020} algorithms are proposed to select the integration order on the different levels of sub-cells. Ref.~\cite{Longva2020} presents a methodology to reduce the number of integration points in a manner similar to moment fitting techniques. These optimization techniques have demonstrated that reducing the number of integration points does not necessarily compromise the accuracy of the simulations. This is theoretically supported by Strang's first lemma \cite{Strang1973,Strang1972,ErnGuermond}, which indicates that integration does not need to be exact in order to attain (optimal) convergence \cite{Strang1973}\footnote{As formulated by Strang and Fix in 1973 \cite{Strang1973}: \textit{``What degree of accuracy in the integration formula is required for convergence? It is not required that every polynomial which appears be integrated exactly."}}. It should be mentioned that this lemma is also considered in the context of CutFEM in~\emph{e.g.}, \cite{Burman2016,Burman2017,Divi2020}. 	
\end{itemize}
In the selection of an appropriate cut element integration scheme one balances robustness (with respect to cut element configurations), accuracy, and expense. If one requires a method that automatically treats a wide range of cut element configurations and is willing to pay the price in terms of accuracy and computational effort, octree integration is the compelling option. For moderate accuracy, quadratures that rely on exact monomial integrations and only involve vertex evaluations are appealing in terms of accuracy and robustness. If the accuracy and computational-expense requirements are more stringent and the range of configurations is suitably restricted, alternative techniques such as cut element reparameterization are attractive. In the case of implicit boundary representations, an additional consideration in the selection of the cut element integration scheme is whether or not it is required to obtain a parameterization of (or integration scheme on) the unfitted boundary. Parameter optimization procedures can aid in fine-tuning the balance between robustness, accuracy and expense. The appropriateness of the various techniques also depends on the way in which the geometry is represented (implicit \emph{vs.} explicit), as this can have a substantial impact on the implementation of a particular technique.

%Challenge: Dirichlet boundary condition imposition
\subsubsection{Dirichlet boundary condition imposition}\label{sec:dirichlet}
Since the (immersed) boundary of the physical domain does not coincide with the background mesh, the imposition of boundary conditions in immersed methods requires special consideration. Given that a parameterized boundary representation for integration over the boundary exists, Neumann (or natural) boundary conditions can be imposed weakly, in the same way as done in boundary-fitted FEM. The imposition of Dirichlet (or essential) boundary conditions in immersed FEM is not as straightforward, however. Because of the disparity between the background grid and the physical domain, basis functions defined on the background mesh are not generally interpolatory on the unfitted boundary. This precludes strong imposition of Dirichlet conditions as in boundary-fitted FEM. Therefore, boundary conditions in immersed FEM are usually imposed weakly. Different techniques for the imposition of Dirichlet boundary conditions on unfitted boundaries exist, the most prominent of which are:

\begin{itemize}
\item \emph{Penalty method:} The penalty method supplements the weak form of boundary-fitted FEM with a penalty term that penalizes differences between the approximate solution and the prescribed Dirichlet data. This approach, which has been applied in the pioneering work on the finite cell method \cite{Duester2008}, is generally considered as the most straightforward technique to impose Dirichlet conditions on unfitted boundaries. The formulation omits the boundary terms that arise from the partial integration in the derivation of the weak form -- in boundary-fitted FEM these terms drop out as the test functions vanish on Dirichlet boundaries -- such that a modeling error is introduced which yields an inconsistent formulation. For appropriately selected penalty parameters the inconsistency can be acceptable \cite{Babuska1973}, making the penalty method effective for a broad class of immersed problems with complex geometries, see, \emph{e.g.}, \cite{Elhaddad2017,Korshunova2020}. Nevertheless, the choice of an appropriate penalty parameter is challenging. A too small value does not adequately enforce the prescribed boundary conditions, while a too large value exacerbates the conditioning problems~\cite{Ruess2014} and can lead to 
large, nonphysical, gradients on cut elements~\cite{Miegroet2007,Jiang2015,Prenter2018}. % For high-order methods, the feasible set of penalty parameters is in many cases empty.
 
\item \emph{Nitsche's method:} Nitsche's method \cite{Nitsche1971} can be considered as the consistent equivalent of the penalty method, as it retains the boundary gradients (sometimes referred to as the flux terms) in the weak formulation. Through appropriate scaling of the Nitsche (or penalty) parameter, a stable formulation is obtained, see, \emph{e.g.}, \cite{FernandezMendez2004}. Nitsche's method is a widely used technique for the weak imposition of boundary conditions in immersed finite element methods. An elegant aspect of Nitsche's method is that the parameters can be computed per element \cite{Embar2010}, avoiding potential difficulties in the selection of a single global Nitsche parameter, see, \emph{e.g.}, \cite{Ruess2013}. The value of the Nitsche parameter should be inversely proportional to the thickness of the element\footnote{This holds for second-order elliptic problems like the Poisson problem \eqref{eq:poissonstrong}. With higher-order equations some parameters require higher scaling rates. \cite{Embar2010}}, and can become arbitrarily large for small cut elements \cite{Prenter2018}. This problem can be remedied by means of additional stabilization terms such as the ghost penalty, or by the aggregation of basis functions; see Section~\ref{sec:stabilization}. Also, a nonsymmetric Nitsche method can be applied to avoid the need for stabilization \cite{Burman2012,Boiveau2016,Schillinger2016}, although this does affect the linear solver. Additionally, nonsymmetric Nitsche methods are not adjoint consistent, which means that these formulations result in suboptimal approximation properties in the~$L^2(\Omega)$-norm~\cite{Burman2012}.

\item \emph{Lagrange multiplier techniques:} Dirichlet conditions on immersed boundaries can be enforced by supplementing the weak formulation with additional constraint terms, \emph{e.g.}, \cite{Babuska1973lagrange,FernandezMendez2004}. In contrast to the penalty method and Nitsche's method, in Lagrange multiplier techniques these constraint terms are associated with an auxiliary field variable which is defined over the unfitted boundary. This auxiliary field is referred to as the Lagrange multiplier field. Lagrange multiplier techniques result in a saddle point problem, which implies that the discrete Lagrange multiplier field needs to be selected in such a way that a stable system is obtained \cite{Bertsekas2982}. Examples of Lagrange multiplier type techniques for immersed FEM are presented in \cite{Burman2010Lagrange,Baiges2012,Tur2014,Kollmannsberger2015,Kadapa2016}. While an advantage of Lagrange multiplier techniques over Nitsche's method and the penalty method is that these do not require the selection of a parameter, the downsides are that additional degrees of freedom are introduced through the Lagrange multiplier field, and that a (inf-sup) stable discretization of the Lagrange multiplier field is generally non-trivial. Additionally, for many problems, the introduction of Lagrange multipliers changes the nature of the linear system from positive (semi-)definite to indefinite and breaks the diagonal dominance. This affects the applicability of iterative solvers (in particular this precludes the conjugate gradient method) and the factorization in sparse direct solvers.

\item \emph{Basis function redefinition:} An alternative class of techniques to impose Dirichlet conditions on immersed boundaries is based on the idea to redefine the basis functions in such a way that the modified (non-vanishing) basis functions are interpolatory on the unfitted boundary. This enables traditional strong imposition of boundary conditions, as is standard in boundary-fitted finite element methods. Prominent examples of methods based on this concept are WEB-splines \cite{Hoellig2001,Hoellig2005} and, more recently, i-splines \cite{Sanches2011}. The main advantage of these techniques is that they do not require modifications to the weak formulation in comparison to boundary-fitted FEM, but the algorithms to redefine the basis functions constitute an additional non-trivial component in the implementation and analysis.

\end{itemize}
In the selection of an appropriate technique for imposing Dirichlet conditions in immersed finite elements, consistency and accuracy requirements are a prominent consideration. If there are no stringent accuracy requirements, the penalty method is an attractive option on account of its simplicity. If this method does not meet the accuracy requirements, one can resort to consistent weak formulations, where particularly Nitsche's method strikes a suitable balance between accuracy and ease of implementation. Basis function redefinition strategies are an attractive alternative when there are reasons to enforce Dirichlet conditions in a strong manner, like in boundary-fitted finite element formulations.

%Challenge: Stability and conditioning
\subsubsection{Stability and conditioning}\label{sec:introStabilityConditioning}

Immersed discretizations that make use of finite element spaces defined on a background mesh generally suffer from the so-called \emph{small-cut-element problem}. Conventional boundary-fitted finite element methods impose conditions on the shape and the size of the elements in 
the considered (family of) meshes. Such conditions can be (more-or-less) directly managed by the mesh-generation algorithm. 
In immersed finite element methods, on the other hand, one has no control over the shape and size of cut elements. Consequently, cut elements can have arbitrarily small intersections with the physical domain, which is commonly referred to as the small-cut-element problem. This loss of control in the immersed or unfitted setting can have extensive implications on the well-posedness and conditioning of the resulting discrete problem, unless the immersed method is judiciously formulated.

The challenge of stability and conditioning in immersed finite elements with respect to small cut elements is the main focus of this review. Frequently, this small-cut-element problem is considered as a single-faceted problem. In our opinion, however, there are two distinct (albeit strongly related) facets to the small-cut-element problem, \emph{viz.}\ stability and conditioning:

\begin{itemize}
\item \emph{Stability} is related to the fact that the numerical formulation itself can be ill-posed in the immersed setting. The most clear example of this is the imposition of Nitsche's method with standard unfitted finite element spaces. The Nitsche parameter required for coercivity tends to infinity under mesh refinement, which can lead to unbounded gradients on immersed or unfitted boundaries. It should be mentioned that the stability of immersed finite elements is closely related to the way in which essential boundary conditions are enforced, and, in the case of Nitsche's method, to the value of the Nitsche parameter.

\item \emph{Conditioning} is related to the linear algebraic problem of obtaining the solution of the discrete system that arises from an immersed finite element formulation. Even if the problem is properly defined from a stability perspective, \emph{e.g.}, with only Neumann conditions on unfitted boundaries, the resulting system can be arbitrarily ill-conditioned, which impedes the solution of the system. This is caused by functions that are only supported on small cut elements, for which the operator norm (that is equal to the matrix energy norm of the corresponding coefficient vector) is affected by the cut-element size, while the norm of the coefficient vector itself is not. This implies that the eigenvalues of the system matrix can be arbitrarily close to zero, depending on the cut-element configuration.

\end{itemize}
A myriad of techniques has been developed to counteract the problems associated with small cut elements, encompassing treatments for both stability and conditioning issues. The effects of cut elements on the conditioning of the linear system are discussed in detail in Section~\ref{sec:conditioningAnalysis}. Section~\ref{sec:preconditioners} provides an overview of tailored preconditioners that address this issue. Specific attention in this section is devoted to Schwarz preconditioners, that form a natural resolution to the conditioning problem. The stability of immersed discretizations is treated in detail in Section~\ref{sec:stabilization}, which considers both the stability problem itself in Section~\ref{sec:introSec4} and the methodologies that have been developed to resolve it in Section~\ref{sec:stabilityLiterature}. Two particular approaches, \emph{viz.}\ element aggregation and the ghost-penalty method, resolve the stability problems in a manner that yields enhanced coercivity in $H^1(\Omega_h)$ (\emph{i.e.}, on the union of all active elements) instead of just in $H^1(\Omega)$. Consequently, these approaches do not only guarantee stability, but simultaneously preclude conditioning problems. Element aggregation and the ghost-penalty method are discussed in detail in Sections~\ref{sec:aggregation} and \ref{sec:ghost}, respectively, and Section~\ref{sec:stabilityAnalysis} presents a unified mathematical approach to analyze these techniques.

\begin{remark}\textbf{Interpretation from the perspective of norm equivalences.} In terms of the problem definitions presented above, stability and conditioning can be distinguished as different norm equivalences in the discrete space. Stability pertains to the strength of the norm equivalence between the $H^1(\Omega)$-norm, which is a common measure for establishing the quality of a solution, and the operator norm, $\| \cdot \|_{a_h}$ (or the equivalent $\beta$-norm or energy norm, which will be defined in Section~\ref{sec:stabilization}). Conditioning, on the other hand, pertains to the strength of the norm equivalence of the matrix energy norm, $\| \cdot \|_{\mathbf{A}}$ (which is equal to the operator norm of the corresponding discrete function), and the $\ell^2$ vector norm, $\| \cdot \|_2$. In an unfitted discretization that contains small cut elements, the equivalences between these norms can become very weak, which indicates the potential of difficulties with respect to stability and/or conditioning. To ensure coercivity of the weak formulation when Nitsche's method is employed in an immersed setting without a dedicated stabilization technique, a (locally) very large Nitsche parameter is required. Such a very large Nitsche parameter, however, also causes the operator norm to be only very weakly bounded by the $H^1(\Omega)$-norm \cite{Prenter2018}. Similarly, functions that are only supported on small cut elements have a small operator norm, while the size of the cut elements does not affect the the $\ell^2$-norm of the coefficient vector. For this reason, the bound on the $\ell^2$-norm of the coefficient vector in terms of the matrix energy norm can be arbitrarily weak~\cite{Prenter2017}.\end{remark}

\begin{remark}\textbf{Stable explicit time integration.} While not considered in detail in this review, a related challenge is the stability of explicit time integrators on unfitted grids containing small cut elements. Analogous to the system matrix, the eigenvalues of the (consistent) mass matrix cannot be bounded from below in immersed formulations, such that the stable time step can become arbitrarily small. Similar to the stability of the solution on small cut elements discussed above, this can generally be resolved by the function space manipulations discussed in Section~\ref{sec:aggregation} or by the addition of the weak stabilization terms as discussed in Section~\ref{sec:ghost} to the mass matrix, see, \emph{e.g.}, \cite{burman2020explicit}. Regarding the stability of explicit time integrators, also the investigations into the spectral behavior of Nitsche's method in both boundary-fitted and immersed settings presented in \cite{Harari2018,Harari2019,Albocher2021} are of particular interest. It should be mentioned that systems with lumped mass matrices and smooth (isogeometric) discretizations form an exception to the dependence of the stable time-step size on cut elements. For such systems, the eigenvalues of the lumped mass matrix on small cut elements scale more favorably with the size of small cut elements than the eigenvalues of the stiffness matrix. Therefore, stable explicit time integration can be performed without additional stabilization and with time steps dependent on the background element size, see \cite{Leidinger2019,Leidinger2020} for details.\end{remark}

% conditioning 
\section{Ill-conditioning and preconditioning}
\label{sec:conditioningFull}

An integral aspect of finite element methods is to find the solution of the corresponding linear system of equations. For small systems this is generally done by a direct solver, which factorizes the linear system and directly computes the solution up to machine precision. The computational cost of direct solvers scales poorly with the size of the system, however, resulting in computation time and memory requirements that become prohibitive for large systems \cite{GolubVanLoan}. For this reason, large systems are generally solved by iterative solvers, the computational cost of which generally scales better with the size of the system \cite{Saad}. The convergence of iterative solvers is strongly correlated with the conditioning of the system. Without tailored stabilization or preconditioning, systems derived from immersed finite element formulations are generally severely ill-conditioned, impeding the application of iterative solvers \cite{Prenter2017}. In Section \ref{sec:conditioningAnalysis}, an analysis of the causes of ill-conditioning in immersed finite element systems is presented. Preconditioners which effectively resolve these conditioning problems are discussed in Section~\ref{sec:preconditioners}. Section~\ref{sec:Schwarz} provides a detailed discussion about Schwarz preconditioners, which have commonly been applied to immersed finite element systems in recent years. This last subsection also includes an example that illustrates the effect of Schwarz preconditioning on an immersed finite element simulation of a scan-based geometry.

% conditioning
\subsection{Conditioning analysis}\label{sec:conditioningAnalysis}

\subsubsection*{Condition number}

The conditioning of a linear system is often used as an indication of the complexity of solving that linear system by means of an iterative solver. An important property of a system matrix to indicate the conditioning is the (Euclidean) condition number, $\kappa\left(\mathbf{A}\right)$, which is defined as the product of the norm of the system matrix and the norm of its inverse
\begin{equation}\label{eq:conditionnumber}
	\kappa \left( \mathbf{A} \right) = \| \mathbf{A} \|_2 \| \mathbf{A}^{-1} \|_2 \geq 1
\end{equation}
While the condition number is formally defined as the sensitivity of the solution to perturbations in the right hand side, it also relates to the convergence of iterative solvers. It is noted that the convergence of these solvers is dependent on more factors than simply the condition number, such as, \emph{e.g.}, the grouping of eigenvalues and the orthogonality of eigenvectors \cite{Saad,Greenbaum}.

Based on the equality between the matrix energy norm $\| \mathbf{y} \|_{\mathbf{A}}$ of a vector $\mathbf{y}$ in the vector space $\ell^2(N)$ and the operator norm $\| v_h \|_{a_h}$ of the corresponding function $v_h = \sum_i y_i \phi_i$ in the isomorphic function space $V_h$, the norm of the (symmetric) system matrix $\mathbf{A}$ can be written as
\begin{equation}
\label{eq:local_id_HvB}
	\| \mathbf{A} \|_2 = \max_{\mathbf{y}} \frac{\| \mathbf{A}\mathbf{y} \|_2}{\| \mathbf{y} \|_2} = \max_{\mathbf{z},\mathbf{y}} \frac{a_h(w_h,v_h)}{\| \mathbf{z} \|_2\| \mathbf{y} \|_2} = \max_{\mathbf{y}} \frac{a_h(v_h,v_h)}{\|\mathbf{y}\|_2^2} = \max_{\mathbf{y}} \frac{\| v_h \|_{a_h}^2}{\| \mathbf{y} \|_2^2}
\end{equation}
with $w_h = \sum_i z_i \phi_i \in V_h$ and $\mathbf{z} \in \ell^2(N)$. Similarly, for the norm of the inverse of $\mathbf{A}$ it can be written that
\begin{equation}\label{eq:normAinv}
	\| \mathbf{A}^{-1} \|_2 = \max_{\mathbf{y}} \frac{\| \mathbf{y} \|_2^2}{\| v_h \|_{a_h}^2}
\end{equation}
The identities in~(\ref{eq:local_id_HvB}) and (\ref{eq:normAinv}) convey that the condition number of a linear system emanating from an (immersed) finite element formulation is determined by the tightness of the equivalence between, on one side, the vector norm $\|\mathbf{y}\|_2$ of the coefficient vector, and, on the other side, the operator norm $\| v_h \|_{a_h}$ of the corresponding function, on the isomorphic spaces $\ell^2(N)$ and $V_h$. The weakness of this norm equivalence for systems that contain small cut elements is the root cause of ill-conditioning in immersed finite element methods.

\begin{figure}
\centering
    \begin{tikzpicture}
        \node[anchor=south west] at (0mm,0mm) {\includegraphics[height=70mm]{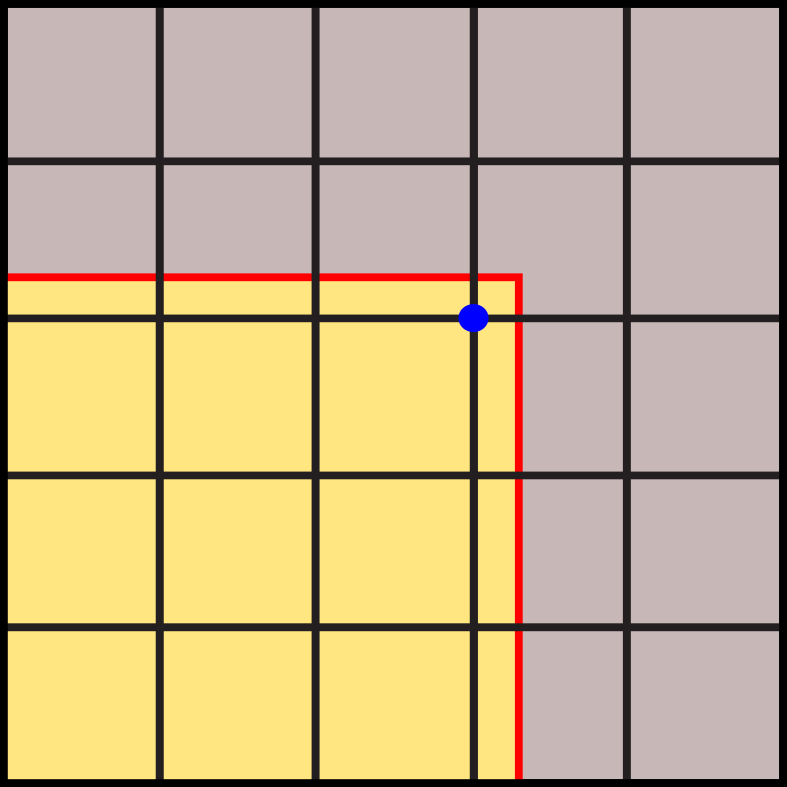}};
        \node at (50mm,50mm) {$T_i$};
        \node at (52mm,22mm) {$\partial\Omega_N$};
        \node at (22mm,22mm) {$\Omega$};
        \draw[|-|, thin] (73mm,43mm) -- (73mm,57mm) node [black,midway,yshift=0mm,xshift=3mm,rotate=0] {$h$};
        \draw[|-|, thin] (43.5mm,73mm) -- (47mm,73mm) node [black,midway,yshift=3mm,xshift=0mm,rotate=0] {$\sqrt{\eta_i}h$};
    \end{tikzpicture}      
\caption{Exemplary cut element geometry to illustrate the derivation of the condition number estimate. The vertex with $\boldsymbol{\xi} = (0,0)$ is indicated in blue.}
\label{fig:scalingRelationGeom}
\end{figure}

The norms defined above can be employed to provide an estimate, or lower bound, for the condition number of a system with a small cut element. To derive this estimate, we consider a two-dimensional cut-element scenario as indicated in Figure~\ref{fig:scalingRelationGeom}. First, we define the volume fraction $\eta_i$ of the cut element $T_i\in\mcThcut$ as the ratio between the volume of the intersection of the element with the domain $\Omega$ and the volume of the full element in the background grid
\begin{equation}\label{eq:ratio}
	\eta_i = \frac{| T_i \cap \Omega |}{| T_i |} = \frac{| T_{i,\Omega} |}{| T_i |}
\end{equation}
Second, we note that in a polynomial basis of order $p$ it is generally possible to construct a function $v_h \in V_h$
\begin{equation}
	v_h = \begin{cases} \frac{\xi_1^p \xi_2^p}{h^{2p}} & x \in T_i \\ 0 & x \notin T_i \end{cases}
\label{eq:smallFunc}
\end{equation}
with $\boldsymbol{\xi}=(\xi_1,\xi_2)$ a local coordinate that has its origin $\boldsymbol{\xi} = \left( 0,0 \right)$ at the (not-cut) vertex of $T_i$, as indicated in Figure~\ref{fig:scalingRelationGeom},
and where the corresponding coefficient vector $\mathbf{y}$ in the vector space $\ell^2(N)$ has a norm that is approximately $\| \mathbf{y} \|_2 \approx 1$ depending on the employed basis (\emph{e.g.}, B-splines, Lagrange, or integrated Legendre). Under the assumption that element $T_i$ is cut by a natural boundary such that the weak form does not contain boundary terms, the operator norm of $v_h$ is given by
\begin{equation}\begin{aligned}
	\| v_h \|_{a_h}^2 &  = \frac{1}{h^{4p}} \int_{\xi_1=0}^{h \eta_i^{\frac{1}{2}}}\int_{\xi_2=0}^{h \eta_i^{\frac{1}{2}}} \left(\left( p \xi_1^{p-1}\xi_2^{p}\right)^2  + \left(p \xi_1^{p}\xi_2^{p-1}\right)^2\right) \text{d} \xi_1 \text{d} \xi_2 \\
	& = \frac{2p^2}{\left(2p-1\right)\left(2p+1\right)} \eta_i^{2p}
\end{aligned}\end{equation}
This shows that while the $\ell^2$ vector norm $\| \mathbf{y} \|_2$ is not affected by the volume fraction $\eta_i$, the operator norm~$\| v_h \|_{a_h}$ and, by equivalence, the matrix energy norm scales with the volume fraction $\eta_i$. Therefore it follows that
\begin{equation}
	\|\mathbf{A}^{-1}\|_2 = \max_{\mathbf{y}} \frac{\|\mathbf{y}\|_2^2}{\| \mathbf{y} \|_{\mathbf{A}}^2} = \max_{\mathbf{y}} \frac{\|\mathbf{y}\|_2^2}{\| v_h \|_{a_h}^2} \gtrsim \frac{1}{\eta_i^{2p}} 
\end{equation}
with $\gtrsim$ denoting an inequality involving a constant (\emph{i.e.}, $a \gtrsim b$ indicates $a > C b$ for some constant $C$). As the norm $\|\mathbf{A}\|_2$ does not depend on the volume fraction $\eta_i$, this results in a condition number $\kappa(\mathbf{A}) \gtrsim \eta_i^{-2p}$. In \cite{Prenter2017} this derivation is performed in an abstract setting for more general cut scenarios (under certain shape assumptions), for different numbers of dimensions, and with different boundary conditions. This results in the estimate of the condition number of linear systems derived from immersed finite element formulations of second order problems
\begin{equation}
	\kappa(\mathbf{A}) \gtrsim \eta^{-(2p+1-2/d)}
	\label{eq:scaling}
\end{equation}
with $\eta$ denoting the smallest volume fraction in the system, \emph{i.e.}, $\eta = \min_{i} \eta_i$. The scaling relation \eqref{eq:scaling} is numerically verified in \cite{Prenter2017} for different grid sizes, discretization orders, and for both $C^0$-continuous and maximum continuity spline bases.

\clearpage

It can be noticed that the shape of the cut element is not included in the condition number estimate. That is because within certain shape-regularity assumptions (see~\cite{Prenter2017} for details), the shape of the cut element is not a dominant factor in the order of magnitude of the smallest eigenvalue of the system matrix. When the system matrix is (diagonally) scaled, as will be discussed below, the shape of the cut element does play a role. It should be mentioned that, under the same shape-regularity assumptions, the derivation in \cite{Prenter2017} also establishes that when a local, element-wise, Nitsche parameter is employed, the condition number estimate is not affected by the type of boundary condition imposed on the (cut) boundary of the smallest cut element. When a global Nitsche parameter is employed, the value of this parameter is determined by the smallest cut element. While this does not affect the smallest eigenvalue of the system, this makes the largest eigenvalue dependent on the smallest cut element as well, resulting in even larger condition numbers.

\subsubsection*{Effect of diagonal scaling, smoothness and the cut-element shape}

The previous paragraph considered the condition number of the system matrix $\mathbf{A}$ as is, without any treatment. It is noted, however, that linear algebra solvers typically perform basic rescaling procedures, most notably diagonal scaling operations such as Jacobi, Gauss-Seidel or SSOR preconditioning \cite{Barrett}. Consequently, it is important to not only consider the condition number independently, but also to have a closer look at the coefficient vector $\mathbf{y}$ corresponding to a function with a very small operator norm $\| v_h \|_{a_h}$. As indicated in \cite{Prenter2017}, there exist two mechanisms by which the operator norm of a basis function can be much smaller than the norm of the corresponding coefficient vector. First, it is possible that a basis function in itself is very small. In this case, simply the unit vector corresponding to that specific basis function will already cause a large condition number through \eqref{eq:normAinv} (\emph{i.e.}, with this unit vector taken as $\mathbf{y}$, the quotient in this equation will be very large). Second, on small cut elements it is possible that the dependence of certain basis functions on a specific parametric coordinate or on higher-order terms diminishes. This essentially reduces the dimension of the space spanned by the basis functions on the small cut element, such that these basis functions become almost linearly dependent (see Figure~\ref{fig:good_bad_cut}). In this case, the small function $v_h$ as in \eqref{eq:smallFunc} cannot be represented by a unit vector, and the coefficient vector $\mathbf{y}$ corresponding to the small function requires multiple nonzero entries. This is an important nuance in relation to the scaling of the system, as small eigenvalues that correspond to an (almost) unit vector are resolved by diagonal scaling, while small eigenvalues caused by almost linear dependencies are not.

Because of the above-discussed distinction between the classes of small eigenvalues in immersed finite element systems, there is a significant difference in conditioning between, on one hand, higher-order ($p \geq 2$) $C^0$-polynomials, and, on the other hand, maximal continuity splines and linears (note that linears are a subclass of maximal continuity splines). This is because in the former case, almost linear dependencies will be formed on all small cut elements. For such systems, Jacobi preconditioning lowers the condition number, but the resulting systems are still ill-conditioned and the condition number still shows a dependence on the smallest cut element. For maximum continuity splines, the shape of small cut elements starts to play a role. On elements in which a vertex is contained in $T_{\Omega}$, as indicated by element $T_{1,\Omega}$ in Figure~\ref{fig:good_bad_cut}, a discretization with maximum continuity splines will only contain a single basis function of which the support is restricted to the small cut element. Therefore, a Jacobi preconditioner suffices to repair the conditioning with regard to that element. On cut elements where $T_{\Omega}$ does not contain a vertex, as indicated by element $T_{2,\Omega}$ in Figure~\ref{fig:good_bad_cut}, the diminished dependence on the (in this case horizontal) parametric coordinate causes almost linear dependencies. Consequently, such elements still cause ill-conditioning even after diagonal rescaling of immersed systems with maximum continuity splines. Because of this dependence on the continuity of the basis, a scaled linear system derived from an unfitted discretization with maximum continuity splines will contain far fewer problematically small eigenvalues than a scaled linear system derived from an unfitted discretization with $C^0$-continuous basis functions of the same degree $p \geq 2$. 
Depending on the size of the system and the smoothness of the boundary, the number of problematic eigenvalues in an immersed isogeometric discretization in conjunction with diagonal scaling can even be small enough to render direct application of a Krylov-subspace based iterative solver feasible. Because only cut elements of a specific shape cause small eigenvalues for systems based on maximum continuity splines in conjunction with diagonal scaling, the overall smallest volume fraction and the scaled condition number are in general only weakly correlated in such systems~\cite{Prenter2017}.

\begin{figure}
     \centering
    \begin{tikzpicture}
        \node[anchor=south west] at (0mm,0mm) {\includegraphics[height=70mm]{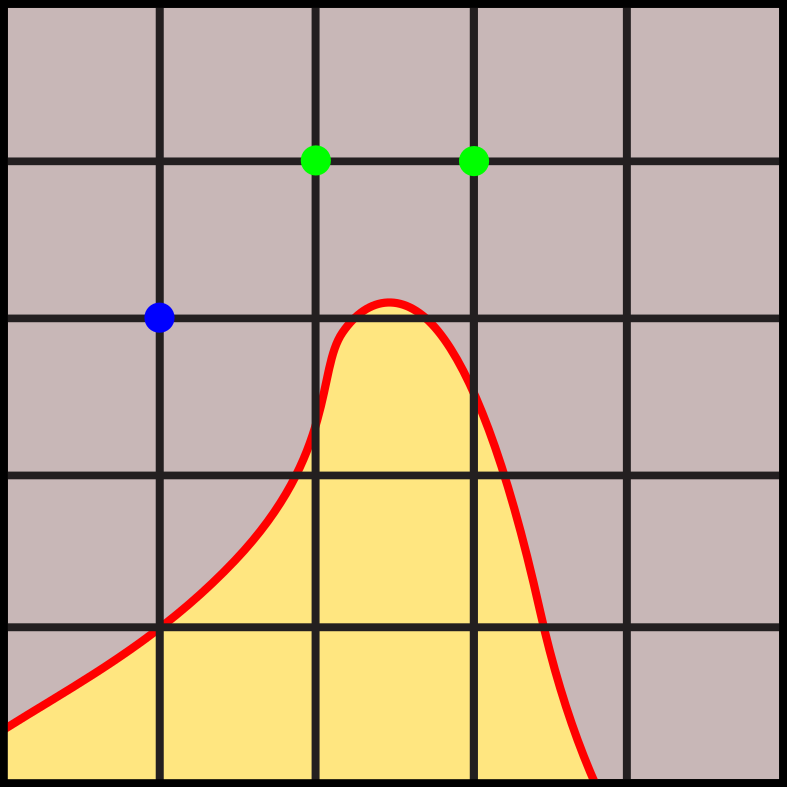}};
        \node at (22mm,36mm) {$T_1$};
        \node at (36mm,50mm) {$T_2$};
        \node at (52mm,22mm) {$\partial\Omega$};
        \node at (36mm,8mm) {$\Omega$};
    \end{tikzpicture}      
\caption{Illustration of an unfitted boundary with different types of cut elements. Element $T_{1,\Omega}$ contains a vertex. With maximal continuity splines, only one basis function is supported on this element only (for a linear basis, the node corresponding to this function is indicated by the blue dot). Rescaling the basis functions will resolve the conditioning problems for this type of cut element. Element $T_{2,\Omega}$ does not contain a vertex, and multiple basis functions are only supported on this element (with a linear basis, these are the basis functions corresponding to the nodes indicated with the green dots). If the volume fraction of this element is very small, the dependence of the basis functions on the horizontal coordinate (relative to the dependence on the vertical coordinate) diminishes, such that the basis functions describe essentially the same degree of freedom and become linearly dependent. Therefore, rescaling the basis functions will not resolve the conditioning problems on this type of cut element. In a similar manner, the relative contribution of higher-order terms diminishes relative to the contribution of linear terms on small cut elements. As a result, with higher-order discretizations that are not of maximal continuity, almost linear dependencies generally occur on small cut elements of any shape, while these only occur on specifically cut elements with discretizations of maximal continuity.}
\label{fig:good_bad_cut}
\end{figure}

% preconditioning
\subsection{Preconditioning and literature overview}\label{sec:preconditioners}
Based on the conditioning analysis presented above, ill-conditioning of immersed FEM systems can effectively be negated by dedicated preconditioning techniques. The general idea of preconditioning is to construct a preconditioning matrix, $\mathbf{B}$, which is an approximation of the inverse of the system matrix $\mathbf{A}$, and then to solve the preconditioned system
\begin{align}
  \mathbf{B} \mathbf{A} \mathbf{x} = \mathbf{B} \mathbf{b}
\end{align}
Although the original system matrix $\mathbf{A}$ in equation~\eqref{eq:linearsystem} can be ill-conditioned, \emph{i.e.}, $\kappa(\mathbf{A}) \gg 1$, a properly formed preconditioner results in a well-conditioned preconditioned system matrix, \emph{i.e.}, $\kappa( \mathbf{B} \mathbf{A} )\approx \kappa( \mathbf{A}^{-1} \mathbf{A} ) = \kappa( \mathbf{I} ) = 1$. In constructing the preconditioner $\mathbf{B}$, one balances the computational effort required to compute and apply the preconditioner with the extent to which the inverse of the original system matrix is approximated. 

Various dedicated preconditioning techniques to resolve ill-conditioning problems caused by small cut elements have been developed in the context of GFEM and XFEM, such as: a preconditioner based on local Cholesky decompositions \cite{Bechet2005}, a FETI-type preconditioner tailored to XFEM \cite{Menk2011}, an algebraic multigrid preconditioner that is based on the Schur complement of the enriched basis functions \cite{Hiriyur2012}, and domain decomposition preconditioners based on additive Schwarz \cite{Berger-Vergiat2012,Waisman2013}. 

In recent years, dedicated preconditioners have been developed for immersed FEM. It is demonstrated in \cite{Lang2014} that systems with linear bases can effectively be treated by a diagonal preconditioner in combination with the removal of very small basis functions. Under certain restrictions on the cut-element geometry, it is derived in \cite{Lehrenfeld2017} that a scalable preconditioner for linear bases is obtained by combining a Jacobi preconditioner for basis functions on cut elements with a standard multigrid preconditioner for interior basis functions that do not intersect the boundary. In \cite{Prenter2017}, a preconditioner is developed that combines diagonal scaling with local Cholesky factorizations. This technique is motivated by the analysis of the conditioning problems of immersed methods in the previous section, and can be interpreted as a local change of basis on small cut elements, where the Cholesky factorizations correspond to local orthonormalization procedures for almost linearly dependent basis functions, which are identified by a tailored algorithm. The resulting preconditioner effectively resolves ill-conditioning for immersed methods discretized with higher-order (continuous) bases. 

Recently, Schwarz preconditioners -- which were already discussed above in the context of GFEM and XFEM~\cite{Berger-Vergiat2012,Waisman2013} -- have also gained momentum in immersed finite elements. The concept of Schwarz preconditioning to overcome the problem of linear dependencies on small cut elements was considered in~\cite{Prenter2019,Ludescher2020}. This concept was generalized to multilevel $hp$-finite element bases in \cite{Jomo2019}, where it was also demonstrated to be effective in parallel computing frameworks. 
In~\cite{Prenter2020}, Schwarz preconditioning of unfitted systems was applied as a smoother in a multigrid solver, making it suitable for large scale computations. The methodology was applied in high-performance parallel-computing settings in \cite{Saberi2020,Jomo2020}, with~\cite{Jomo2020} demonstrating excellent scalability for problems with multi-billion degrees of freedom 
distributed over close to~$10^5$ cores. A Balancing Domain Decomposition by Constraints (BDDC) scalable preconditioner, tailored to immersed FEM by choosing appropriate weighting coefficients for cut basis functions, has been proposed in~\cite{Badia2017}. BDDC methods are multilevel additive domain decomposition algorithms that can scale up to millions of cores/subdomains~\cite{Badia_2016-3}. The preconditioner in~\cite{Badia2017} results in an effective preconditioner for linear basis functions and exhibits the same parallelism potential as standard BDDC, thus being well-suited for large-scale systems on distributed-memory machines.

It is worth mentioning that dedicated preconditioners have also been developed and investigated for ghost-penalty stabilized discretizations. While, as will be demonstrated in Section~\ref{sec:stabilization}, such stabilized systems do not suffer from the typical conditioning problems related to small cut elements discussed in Section~\ref{sec:conditioningAnalysis}, the different setting of the problem and the involvement of additional terms does warrant the investigation into the applicability of efficient solvers, in particular multigrid techniques. Dedicated multigrid routines for Nitsche-based ghost-penalty stabilized methods are presented in \cite{Kothari2021,Nuessing2018,Kothari2020}. It is notable that \cite{Kothari2020} additionally presents multigrid routines for Lagrange multiplier based methods, see also \cite{KothariEccomas,Kothari2022}. Furthermore, in \cite{Gross2021} and \cite{Gross2022} a similar approach as in \cite{Lehrenfeld2017} (discussed above) is followed. In these references it is demonstrated that a stable splitting of the degrees of freedom exists, and that a scalable solver is obtained by combining a standard multigrid technique for a well-defined set of internal degrees of freedom with a diagonal preconditioner for the set of degrees of freedom along the boundary. While this technique is only applicable to ghost-penalty stabilized systems, in contrast to \cite{Lehrenfeld2017} it can also be applied to higher-order discretizations. A final noteworthy contribution is \cite{AyusoDeDios2020}, which presents a preconditioner for ghost-penalty stabilized immersed interface problems of high contrast. In this reference a different splitting of the degrees of freedom is applied to define a Schur complement, based on which preconditioners are presented that are robust to high contrast ratios.

The remainder of this section focuses on Schwarz preconditioning and the considerations regarding its application to systems derived from immersed finite element methods.

% Schwarz
% explanation
\subsection{Schwarz preconditioning}\label{sec:Schwarz}

\vspace{.8mm}

\subsubsection*{Concept of Schwarz preconditioning}

\vspace{.8mm}

The concept of Schwarz preconditioning is to invert (restrictions of) local blocks of the system matrix $\mathbf{A}\in\mathbb{R}^{N \times N}$ and then sum these contributions to form the preconditioner. To provide a definition, we consider a set of (potentially overlapping) index blocks, where each index block contains $M_i \geq 1$ indices. The additive Schwarz preconditioner is then defined as
\vspace{.8mm}
\begin{equation}
 \mathbf{B}_{\textsc{AS}} = \sum_i \mathbf{B}_i = \sum_i \mathbf{P}_i \underbrace{\left(\mathbf{P}_i^T\mathbf{A}\mathbf{P}_i \right)^{-1}}_{\mathbf{A}_i^{-1}}\mathbf{P}_i^T
 \label{eq:ASconstruct}
\end{equation}
\vspace{.3mm}
and the multiplicative Schwarz preconditioner as
\vspace{.8mm}
\begin{equation}
 \mathbf{B}_{\textsc{MS}} = \sum_i \mathbf{B}_i \prod_{j=1}^{i-1} \big(\mathbf{I} - \mathbf{A} \mathbf{B}_j \big)
 \label{eq:MSconstruct}
\end{equation}
The prolongation operator $\mathbf{P}_i \in \mathbb{R}^{N \times M_i}$ consists of the unit vectors corresponding to the $M_i$ indices in the $i$-th index block. Pre- and post-multiplying the system matrix $\mathbf{A}\in\mathbb{R}^{N \times N}$ with the (transpose of) this prolongation operator restricts it to the submatrix $\mathbf{A}_i\in\mathbb{R}^{M_i \times M_i}$ consisting of only the indices in the $i$-th index block. Similarly, the opposite pre- and post-multiplication with these operators injects the local inverse $\mathbf{A}_i^{-1}\in\mathbb{R}^{M_i \times M_i}$ into the matrix $\mathbf{B}_i\in\mathbb{R}^{N \times N}$. It is to be noted that the index blocks may overlap. In additive Schwarz these contributions are then added in $\mathbf{B}_{\textsc{AS}}$ and treated simultaneously. In multiplicative Schwarz, repetitions of the same index are treated sequentially. Application of the multiplicative Schwarz preconditioner can be expressed by recursive relations; see, \emph{e.g.},~\cite{Prenter2020}. It is noted that index blocks consisting of a single index simply reduce to Jacobi preconditioning in additive Schwarz, and to Gauss-Seidel preconditioning in multiplicative Schwarz. The formal definition and details about the construction of Schwarz preconditioners are presented~\cite{Prenter2019} and~\cite{Prenter2020}.

\subsubsection*{Application to immersed systems}

\vspace{.8mm}

Schwarz preconditioners can be conceived of as locally orthonormalizing the basis functions corresponding to the indices in a block. In order to effectively employ the concept of Schwarz preconditioning as a tailored preconditioner for unfitted systems, it is therefore essential that \emph{for every set of almost linearly dependent functions, there is an index block containing all these functions}. Since almost linear dependencies occur between basis functions that are supported on a small cut element, the index blocks are generally chosen by selection procedures based on the overlapping support of basis functions, either for all active elements in $\mcTh$ or only for the cut elements in $\mcThcut$. A discussion regarding considerations in the index blocks is provided later in this section.

A further interpretation of Schwarz preconditioning in relation to immersed finite element methods can be obtained from the additive Schwarz lemma. This lemma states that for a Symmetric Positive Definite (SPD) matrix $\mathbf{A}$, the $\mathbf{B}_{\textsc{AS}}^{-1}$-inner product of an arbitrary vector $\mathbf{y}$ is equal to \cite{Matsokin1985,Lions1988} (see \cite{Smith1996,Toselli2005} for this specific form)
\vspace{.8mm}
\begin{equation}
 \mathbf{y}^T \mathbf{B}_{\textsc{AS}}^{-1} \mathbf{y} = \min_{\mathbf{y} = \sum\limits_j \mathbf{P}_j \tilde{\mathbf{y}}_j} \hspace{.2cm} \sum\limits_i \tilde{\mathbf{y}}_i^T \mathbf{A}_i \tilde{\mathbf{y}}_i = \min_{\mathbf{y} = \sum\limits_j \mathbf{P}_j \tilde{\mathbf{y}}_j} \hspace{.2cm} \sum\limits_i \left( \mathbf{P}_i \tilde{\mathbf{y}}_i \right)^T \mathbf{A} \left( \mathbf{P}_i \tilde{\mathbf{y}}_i \right)
 \label{eq:ASlemma}
\end{equation}
\vspace{.3mm}
In these identities, $\tilde{\mathbf{y}}_i\in\mathbb{R}^{M_i}$ denotes a block vector corresponding to the $i$-th index block. The statement $\mathbf{y} = \sum_j \mathbf{P}_j \tilde{\mathbf{y}}_j$ indicates that the sum of the prolongations of these block vectors form a partition of the vector $\mathbf{y}\in\mathbb{R}^{N}$. A set of block vectors with this property exists, provided that every index is contained in (at least) one index block. In the case that the blocks do not overlap, this set of block vectors is unique. In the case that the index blocks do overlap, multiple sets of block vectors have the partition property. Accordingly, the lemma states that the $\mathbf{B}_{\textsc{AS}}^{-1}$-inner product of $\mathbf{y}$ is equal to the minimum of the sum of the $\mathbf{A}_i$-inner products of the block vectors $\tilde{\mathbf{y}}_i$ over all sets of block vectors with the partition property.

To relate this to the specific conditioning problems in immersed finite element methods, recall that $\mathbf{B}_{\textsc{AS}}$ can be considered as a sparse approximation of $\mathbf{A}^{-1}$. Efficient preconditioning requires $\mathbf{B}_{\textsc{AS}}$ to have similar properties as $\mathbf{A}^{-1}$ or, similarly, $\mathbf{B}_{\textsc{AS}}^{-1}$ to have similar properties as~$\mathbf{A}$. The analysis of the conditioning problems associated with immersed finite element methods in Section~\ref{sec:conditioningAnalysis} conveys that the principal cause of 
these problem is that, potentially, $\| \mathbf{y} \|_2 \gg \| \mathbf{y} \|_{\mathbf{A}}$ in case $\mathbf{y}$ corresponds to a function comprised of \emph{i)} a very small basis function or \emph{ii)} almost linearly dependent basis functions. For the first case, it follows from~\eqref{eq:ASlemma} that~$\mathbf{y}$ must also have a small~$\mathbf{B}_{\textsc{AS}}^{-1}$-inner product, such that this property is captured by the additive Schwarz preconditioner, independent of the index blocks. For the second case, assume that indeed \emph{for every set of almost linearly dependent functions, there is an index block containing all these functions}, in accordance with the previously stated postulate. Then, in case $\mathbf{y}$ corresponds to a function comprised of almost linearly dependent basis functions, $\mathbf{y}$ can be written as the prolongation of a single block vector. Consequently, also in the second case, it follows from \eqref{eq:ASlemma} that $\mathbf{y}$ will have a small $\mathbf{B}_{\textsc{AS}}^{-1}$-inner product, such that this property is captured in the additive Schwarz preconditioner. As a result, with a proper choice of the index blocks, for both causes of very small eigenvalues of the matrix $\mathbf{A}$, the corresponding modes will also be present in $\mathbf{B}_{\textsc{AS}}^{-1}$. The Schwarz preconditioner thus specifically targets the problematic aspects of small eigenvalues due to small cut elements, and thereby effectively resolves the ill-conditioning in systems derived from immersed formulations.

\subsubsection*{Effectivity and multigrid preconditioning}

While the effectivity of Schwarz preconditioning for immersed problems can be explained by the additive Schwarz lemma in combination with a particular selection of the blocks, a formal mathematical bound on the eigenvalues of an unfitted system treated by Schwarz preconditioning has not yet been formulated. Numerical results, however, consistently show that the resulting systems behave the same as boundary-fitted systems, in the sense that (for second order problems as the Poisson equation) the condition number scales as~$h^{-2}$, and the number of iterations that is required to solve the linear system up to a prescribed tolerance is proportional to~$h^{-1}$ \cite{Prenter2019,Prenter2020}.

To resolve the remaining grid dependence after Schwarz preconditioning, the Schwarz preconditioner can be applied as a smoother in a geometric multigrid framework. In~\cite{Prenter2020,Jomo2020,Saberi2020} this is demonstrated to result in a methodology that is robust to cut elements and which solves linear systems with quasi-optimal complexity, \emph{i.e.}, at a computational cost that is linear with the number of degrees of freedom (DOFs). A delicate consideration in a multigrid framework is the choice between additive and multiplicative Schwarz. As demonstrated in \cite{Prenter2020}, the stability of a multigrid solver with an additive Schwarz smoother requires a considerable amount of relaxation, and for this reason \cite{Prenter2020} employs a multiplicative implementation. As discussed later in this section, multiplicative Schwarz is less suited for parallelization, such that parallel implementations employ either additive Schwarz \cite{Jomo2020} or a hybrid variant \cite{Saberi2020}. Another important aspect of multigrid solvers with Schwarz smoothers is the dependence of their effectivity on the discretization order, which is investigated and discussed in \cite{Prenter2020}. 

\subsubsection*{Block selection}

As previously mentioned, for Schwarz preconditioning to be effective, it is essential that every combination of basis functions that can become almost linearly dependent is contained in an index block, which is generally achieved by selecting these blocks based on the overlap in the supports of basis functions. In \cite{Prenter2019}, an index block is devised for every cut element, containing all basis functions supported on it. For basis functions that are not supported on any cut element, a simple diagonal scaling is performed.
A tailored block selection procedure for multilevel $hp$-adaptive discretizations is developed in \cite{Jomo2019}. In this procedure, the set of basis functions supported on a refined (leaf) element is further restricted to only the necessary DOFs. Additionally, the procedure is optimized by only devising blocks for cut elements with a volume fraction that is smaller than a certain threshold $\eta^* \in [0,1]$. For locally refined discretizations based on truncated hierarchical B-splines, a block selection strategy is presented in \cite{Prenter2020}.
In \cite{Prenter2019} it is noted that, for vector-valued problems, degrees of freedom describing the solution in a certain geometrical dimension can generally not form a linear dependency with degrees of freedom describing the solution in another geometrical dimension. Hence, it can be beneficial to generate separate index blocks for each geometrical dimension.

The most important consideration in the selection of index blocks is the size of the blocks. Small blocks are computationally inexpensive, but can miss almost linear dependencies on pathologically cut elements. Large blocks are more robust, but are computationally more expensive. With the Schwarz preconditioner directly employed in an iterative solver, the number of iterations is generally large enough for small blocks to be more efficient. When the Schwarz procedure is employed as a smoother in a multigrid method, the small number of iterations generally renders larger blocks more appropriate. For this reason, \cite{Prenter2020} and \cite{Jomo2020} consider block selection procedures based on multiple elements. In \cite{Prenter2020} an index block is created for every basis function, containing all the basis functions with a support that is encapsulated by the support of the function for which the block is created. Ref.~\cite{Jomo2020} employs index blocks containing all basis functions supported on a cluster of $2^d$ elements, with $d$ the number of dimensions.

Regarding the lack of consensus on the block selection procedures in different contributions, it can be concluded that an optimal choice of index blocks is still an unresolved question.

\subsubsection*{Extension to other problems}

The additive Schwarz lemma in \eqref{eq:ASlemma} only pertains to Symmetric Positive Definite (SPD) systems. Such systems cover a large number of applications, encompassing many problems in structural mechanics, but do not comprise all problems commonly solved by immersed finite element methods. In particular, incompressible-flow problems in which the pressure takes the form of a Lagrange multiplier in the weak formulation are indefinite, and problems involving convection or formulations based on the nonsymmetric Nitsche method are not symmetric. Applications of the Schwarz framework to immersed finite element approximations of such problems are considered in~\cite{Prenter2019}, in which for all considered problems it is observed that the solver is independent of the cut elements and that the number of iterations is approximately inversely proportional to the mesh size.

\subsubsection*{Implementation}

A specific operation in the construction of a Schwarz preconditioner is the computation of stable inverses of the submatrices~$\mathbf{A}_i$. With very small eigenvalues, numerical round-off errors can cause detrimental errors in the inversion of submatrices with eigenvalues that are too close to machine precision. In the worst case, this can lead to negative eigenvalues in the preconditioned system, resulting in failure of the iterative solver. For that reason, the inverses are generally not computed by a simple inversion operation, but via an eigenvalue decomposition. After the decomposition, eigenvalues that are smaller than a prescribed threshold are discarded. Details about this procedure are described in~\cite{Prenter2019,Jomo2019}.

The Schwarz preconditioner is suitable for parallel implementations~\cite{Jomo2019}. A particular facet of the parallelization of a Schwarz preconditioner is that each submatrix $\mathbf{A}_i$ has to be available in a single subprocess for inversion. As this is not generally the case in parallel (boundary-fitted) finite element codes, this calls for special care in the implementation, \emph{e.g.}, by applying ghost elements \cite{Jomo2019}. As previously mentioned, a specific consideration pertaining to parallel multigrid implementations is the choice between additive and multiplicative Schwarz. While multiplicative Schwarz does not require relaxation, it does require extensive communication and synchronization between parallel processes, specifically in distributed memory systems. For that reason the parallel multigrid implementation 
in~\cite{Jomo2020} employs an additive Schwarz smoother, while in \cite{Saberi2020} a hybrid variant is applied with an additive approach for DOFs that are shared between processes and a multiplicative treatment of DOFs that belong to a single process.

% example
\subsubsection*{Preconditioning example}\label{sec:trabecular}

To illustrate the effectivity of Schwarz preconditioning incorporated in a multigrid cycle in providing a robust solution procedure that is not affected by either the cut elements or by the size of the system, we consider the $\mu$CT-scanned trabecular bone specimen displayed in Figure~\ref{fig:TrabecularGeometry}. This geometry was first studied in \cite{Verhoosel2015} and the presented results were previously published in \cite{Prenter2020}. The specimen is compressed with an average uniaxial strain of $1\%$ in the horizontal direction, resulting in the (Frobenius norm) stresses indicated by the colors. Three different meshes are considered, consisting of $32^3$, $64^3$ and $128^3$ elements in the ambient domain. Figure~\ref{fig:TrabecularGeometry} shows the active elements of the coarsest mesh on the right half of the geometry. With quadratic basis functions, these different meshes result in, respectively, $182\cdot10^3$, $1.03\cdot10^6$ and $6.65\cdot10^6$ active degrees of freedom. Figure~\ref{fig:TrabecularResults} shows the convergence behavior of the conjugate gradient solver preconditioned by a multigrid cycle with a tailored multiplicative Schwarz smoother for the different discretizations. It can be observed that the convergence is virtually independent of both the cut elements and the size of the system. For details regarding this example the reader is referred to~\cite{Prenter2020}. 

\begin{figure}
 \centering
 \vspace{-1mm}
 \begin{subfigure}{.49\textwidth}
  \begin{center} 
  \begin{tikzpicture}\node[anchor=south west] at (0mm,0mm) {\includegraphics[height=50mm]{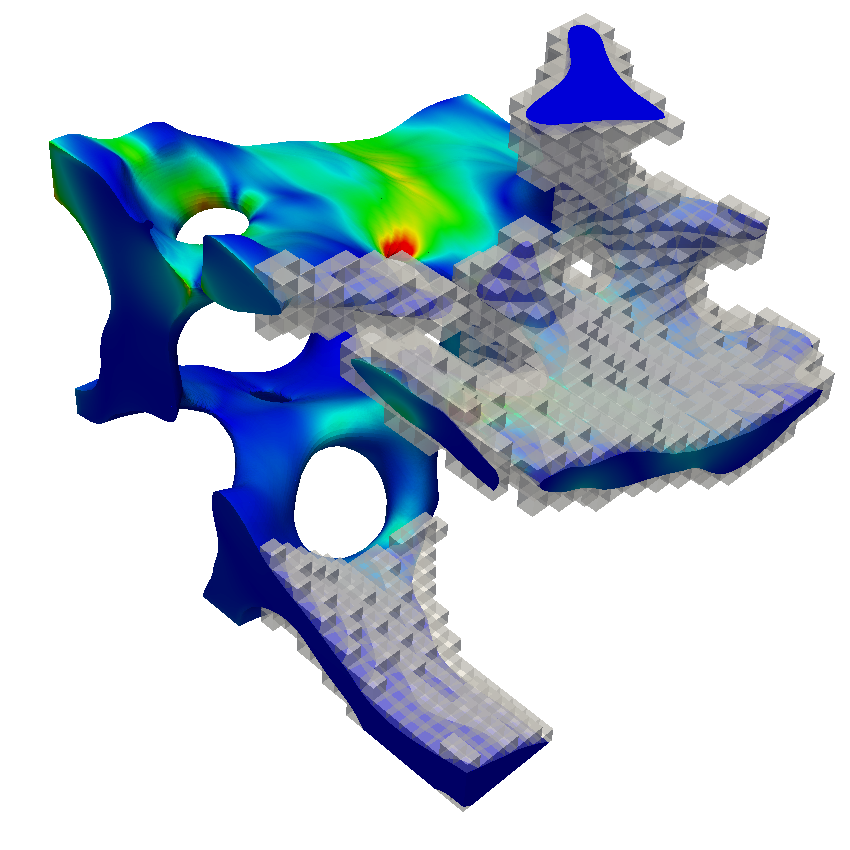}};
  \draw[|-|, thin] (53mm,3mm) -- (56mm,50mm) node [black,midway,yshift=.2mm,xshift=2mm,rotate=86.3] {1.28 mm};
  \end{tikzpicture} \\ \includegraphics[width=5.5cm]{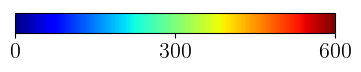}
  \end{center}
  \caption{Frobenius norm of the stress tensor [MPa] and active elements on right part of geometry\label{fig:TrabecularGeometry}}
 \end{subfigure}
 \hfill
 \begin{subfigure}{.49\textwidth}
  \centering
  \vspace{8mm}
  \includegraphics[width=.9\textwidth]{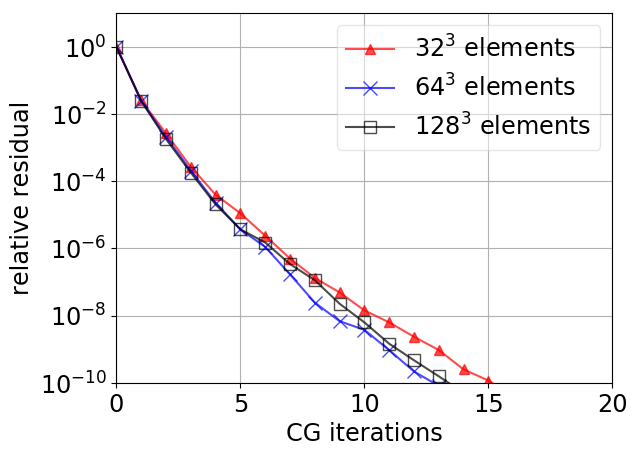} \\ 
  \vspace{9mm}
  \caption{Convergence of the systems with different grid sizes\label{fig:TrabecularResults}}
 \end{subfigure}
 \caption{Illustration of Schwarz preconditioning applied to the immersed elasticity analysis of a trabecular bone specimen. In (a) the geometry, the (right half of the) active elements of the coarsest grid, and the Frobenius norm of the resulting stresses are visible. In (b) the convergence behavior of the preconditioned iterative solver is presented. Data previously published in \cite{Prenter2020}.\label{fig:TrabecularExample}}
\end{figure}

% stability
\section{Stability}\label{sec:stabilization}

\subsection{Stability analysis}\label{sec:introSec4}

As indicated in Section~\ref{sec:introStabilityConditioning}, small cut elements do not only adversely affect the conditioning of the resulting linear system, but can also disturb the quality of the solution as the normal gradient is not adequately controlled. When Nitsche's method is applied on a boundary-fitted discretization, the trace inverse inequality 
$\| \partial_n u_h \|_{T \cap \partial \Omega_D} \lesssim  h_T^{-1/2} \| \nabla u_h \|_{T}$ 
holds on every element $T$ adjacent to the Dirichlet boundary $\partial\Omega_D$; see, \emph{e.g.}, \cite{Warburton2003}. By virtue of this trace inequality, the bilinear operator $a_h(\cdot,\cdot)$ defined in \eqref{eq:immersedOperator} is coercive with a global Nitsche parameter~$\beta$ that scales with~$h^{-1}$, and optimal error bounds with respect to the $H^1(\Omega)$-norm hold. With an immersed discretization, however, an element $T$ that intersects $\partial \Omega_D$ only partially intersects the physical domain $\Omega$. Therefore, the aforementioned trace inverse inequality assumes the form 
$\| \partial_n u_h \|_{T \cap \partial \Omega_D} \lesssim h_{T_\Omega}^{-1/2} \| \nabla u_h \|_{T_\Omega}$,
where $h_{T_\Omega}$ indicates a generalized thickness of the fragment $T_\Omega = T \cap \Omega$ (the intersection of element $T$ and the physical domain) normal to $T \cap \partial\Omega_D$ (the intersection of element $T$ and the Dirichlet boundary); see Section~\ref{sec:method_formulation}. Consequently, coercivity of the bilinear form $a_h(\cdot,\cdot)$ requires an element-wise Nitsche parameter $\beta$ that is 
not inversely proportional to the characteristic mesh width of the background element, $h_T$, but instead is inversely proportional to the generalized thickness of the intersection, \emph{i.e.}, $\beta \sim h_{T_\Omega}^{-1} \gtrsim h_T^{-1}$. Due to the fact that without special precautions, in immersed finite element methods one has no control over~$h_{T_\Omega}$, the Nitsche parameter can in principle become arbitrarily large. 

In \cite{Prenter2018} it is shown that the error of immersed Nitsche-based formulations admits a natural analysis in the norm (within this manuscript referred to as the $\beta$-norm)
\begin{equation}
\label{eq:immersedenergynorm}
  \tn u \tn_\beta^2 = \| \nabla u \|_{\Omega}^2 + \| \beta^{\frac{1}{2}} u \|^2_{\partial \Omega_D} + \| \beta^{-\frac{1}{2}} \partial_n u \|^2_{\partial \Omega_D} \qquad u \in V_h \oplus H^2(\Omega)
\end{equation}
provided that $\beta$ is large enough for the bilinear form to be coercive. Note that this norm is referred to differently in \cite{Prenter2018}, and that it differs from the energy norm that will be introduced in Section~\ref{sec:stabilityAnalysis}. On account of the last term, this norm cannot be applied to the full space $H^1(\Omega)$. For this reason, it is assumed that $u \in H^2(\Omega)$ in the analysis, and the setting is restricted to the composite space $V_h \oplus H^2(\Omega)$. In \cite{Prenter2018} it is demonstrated that the Nitsche-based approximation possesses a best-approximation property with respect to the $\beta$-norm, but that the error of the best approximation in this norm can still be arbitrarily large. In view of this and the fact that the equivalence between the $\beta$-norm in~\eqref{eq:immersedenergynorm} and the $H^1(\Omega)$-norm can be arbitrarily weak, error bounds in the $H^1(\Omega)$-norm cannot be provided. Ref.~\cite{Prenter2018} provides examples of computations where direct application of Nitsche's method fails for unfitted problems and provides references in which nonphysical stress patterns on small cut elements are reported.

\subsection{Remedies and literature overview}\label{sec:stabilityLiterature}

Multiple stability-enhancing techniques have been developed to overcome the aforementioned problem. A list of these is presented below. Two techniques in particular are discussed in detail in the subsequent subsections, \emph{viz.}\ element aggregation in Section~\ref{sec:aggregation} and ghost-penalty stabilization in Section~\ref{sec:ghost}. Essentially, these approaches provide control over the gradients on cut elements in terms of the gradients on interior elements, in a manner that is consistent with the original problem. Consequently, extended coercivity holds with respect to $H^1(\Omega_h)$ (\emph{n.b.}\ on the entire active domain), and the bilinear form is coercive with a well-behaved Nitsche parameter $\beta \sim h_T^{-1} (\lesssim h_{T_\Omega}^{-1})$. With element aggregation this is achieved through a modification of the approximation space, while with the ghost penalty a consistent stiffness is added to the bilinear form. As a result, these techniques provide optimal approximation properties, and condition number estimates analogous to those for boundary-fitted finite element formulations. Therefore, both aspects of the small-cut-element problem are resolved simultaneously. A unified analysis of the two methods is presented in Section~\ref{sec:stabilityAnalysis}.

\begin{itemize}
\item By employing a \emph{different formulation} to enforce boundary conditions, the stability problems caused by Nitsche's method are circumvented. Such alternative methodologies to enforce boundary conditions are discussed in Section~\ref{sec:dirichlet} (\emph{e.g.}, the nonsymmetric Nitsche method \cite{Burman2012,Boiveau2016,Schillinger2016} or the shifted boundary method \cite{Main2018II,Main2018I,Atallah2020,Atallah2021}). For some applications, however, these approaches lead to other complications, such as loss of consistency, symmetry, or adjoint consistency. 

\item A \emph{fictitious domain stiffness} is commonly employed with high-order discretizations in the finite cell method \cite{Parvizian2007,Duester2008} (see \cite{SchillingerRuess2015,Duester2017} for reviews). This approach is mathematically analyzed in \cite{Dauge2015}. With a fictitious domain stiffness, the volume integrals of the bilinear operator are extended into the fictitious domain multiplied by a small parameter (in practical applications this only holds for the fictitious parts of cut elements, \emph{i.e.}, $\Omega_h\setminus\Omega$). This provides a stiffness on the fictitious part of cut elements, providing some control over the approximate solution on these. This approach has been applied to many real-world applications, notable examples of which are implant-vertebra models \cite{Elhaddad2017} and additively manufactured structures \cite{Korshunova2020}. The fictitious domain stiffness is not consistent with the original problem, however. For this reason, a drawback of this approach is that it can be challenging (or for certain applications impossible) to set the parameter low enough to keep the consistency error at an acceptable level, but high enough to obtain sufficient control over the fictitious parts of elements. Besides the stability, the fictitious domain stiffness also improves the conditioning, but generally not to a level that permits the application of iterative solvers.

\item A \emph{minimal stabilization procedure} is developed in~\cite{Buffa2020}. In this procedure, the elements are first classified as good (resp. bad) elements, \emph{i.e.}, elements with a sufficiently (resp. insufficiently) large overlap with the physical domain. Next, for each bad element, the normal derivative of the discrete function on the Dirichlet boundary that appears in the boundary integral of the bilinear form, is replaced by the normal derivative of an extension of the function on a nearby good element. This gives rise to a similar trace theorem as in boundary-fitted discretizations and, accordingly, the weak formulation is coercive with a Nitsche parameter that is inversely proportional to the (untrimmed) element size of the background grid such that optimal error estimates can be derived. This technique has mainly been applied to trimmed patches in isogeometric analysis, and can also be employed for coupling conditions on unfitted interfaces in overlapping multi-patch discretizations \cite{Antolin2021}. Additionally, in \cite{Puppi2020} it is shown that this methodology can be used to obtain inf-sup stable mixed formulations. It is to be noted that this approach does not provide control of the gradients in the (fictitious part of) cut elements themselves, and therefore only provides coercivity with respect to $H^1(\Omega)$, not extended coercivity with respect to $H^1(\Omega_h)$. As such, this approach yields stability and provides a well-posed imposition of boundary conditions, but it does not solve conditioning problems.

\item A \emph{recovery-based stabilization technique} is commonly applied in the Cartesian grid finite element method (cgFEM, \cite{Nadal2013,Marco2015}), and is similar to the ghost penalty with patch control~\cite{Navarro-Jimenez2020}. In this stabilization procedure, the solution on a small cut element is weakly constrained by adding an $L^2$-projection term over the entire background element to the bilinear form. Instead of constraining the solution on the small cut elements to a smooth extension of the solution on neighboring elements, Richardson iteration is applied to constrain the solution to a smooth extension of the approximate solution in the previous iteration. As a benefit of this technique, \cite{Navarro-Jimenez2020} mentions that it does not require operators that are not generally available in (immersed) finite element codes. Furthermore, \cite{Navarro-Jimenez2020} establishes the convergence of the Richardson iteration and a bound on the condition number similar to that of boundary-fitted finite element methods, and also illustrates these aspects numerically.

\item A \emph{least squares stabilization} term can be applied when the employed function space is (at least) $C^1$-continuous. In \cite{Elfverson2019} and \cite{Larsson2022} it is demonstrated that a stable and coercive formulation can also be achieved by applying a least squares finite element term in the vicinity of the Dirichlet boundary. Because in $H^2(\Omega)$ the normal gradient on the boundary can be controlled by volumetric terms, coercivity is even achieved in the full space instead of only in the discrete space. A drawback of this approach is that it only applies to $C^1$-continuous bases. The least squares stabilization does not repair the conditioning, such that in \cite{Elfverson2019} this approach is combined with the removal of basis functions with very small supports in the physical domain and in \cite{Larsson2022} it is combined with a fictitious domain stiffness.

\end{itemize}

\subsection{Aggregated finite element methods}\label{sec:aggregation}

In this section, we introduce a methodology to solve both the stability and conditioning issues described in Section \ref{sec:introStabilityConditioning}. The method was originally proposed in \cite{Badia2018} and was coined the \emph{aggregated} finite element method (AgFEM). As the name points out, the method relies on an aggregation (also called agglomeration) of elements. This element aggregation is used to define a discrete extension operator from degrees of freedom on well-posed (interior) elements to ill-posed (cut) elements. The AgFEM space is the image of this operator. Thus, as indicated in Section~\ref{sec:stabilityLiterature}, AgFEM solves the issues described above by modifying the finite element space, without altering or perturbing the bilinear form. We present this idea for (nodal or Lagrangian) $C^0$-continuous finite element spaces. It can also be applied to discontinuous Galerkin methods, even though this case is trivial; discontinuous Galerkin schemes can readily be applied to the polytopal meshes obtained after aggregation. 

\subsubsection*{Geometrical construction} 

Let us recall from Section~\ref{sec:method_formulation} the problem domain $\Omega \subset \mathbb{R}^d$ that is embedded in the ambient domain $\mathcal{A}\supset\Omega$. The mesh $\mcThA$ is a quasi-uniform and shape-regular discretization of $\mathcal{A}$, $\mcTh$ is the set of all active elements $T\in\mcThA$ that intersect $\Omega$, and the union of all active elements is defined as $\Omega_h = \cup_{T\in\mcTh} T \supset \Omega$. The set of cut elements that intersect the boundary $\partial\Omega$ of the problem domain is denoted as $\mcThcut \subset \mcTh$ and additionally we define the set of interior elements as $\mcThin = \mcTh \setminus \mcThcut$, see Figure~\ref{fig:agg-1}. As previously mentioned, immersed finite element formulations can lead to ill-conditioned discrete systems and unbounded gradients on cut elements. The interior elements in $\mcThin$ do not play a role in these problems, but it is the cut elements in $\mcThcut$ that can lead to the so-called small-cut-element problem. 

\begin{figure}[htpb]
    \begin{subfigure}[b]{0.49\textwidth}
    \centering
    \includegraphics[width=0.7\textwidth]{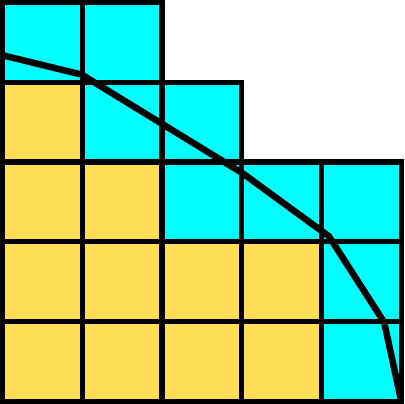}
    \caption{Interior and cut elements}
    \label{fig:agg-1}
    \end{subfigure}
    \begin{subfigure}[b]{0.49\textwidth}
        \centering
        \includegraphics[width=0.7\textwidth]{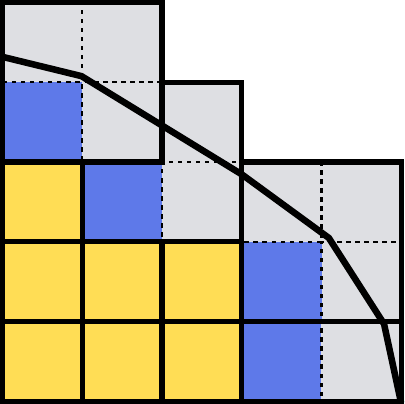}
        \caption{Aggregates and root elements}
        \label{fig:agg-1b}
    \end{subfigure}
    \caption{On the left, we show the active portion of a  background mesh $\mcTh$, with interior elements in $\mcThin$ (yellow) and cut elements in $\mcThcut$ (light blue). On the right, we create aggregates (in gray) composed of one root element (dark blue) and some cut elements. The curved black line represents the boundary $\partial \Omega$.}
\end{figure}

The definition of finite element spaces that are robust to cut-element configurations makes use of an element aggregation strategy. The idea is to aggregate cut elements to interior elements. An aggregate contains one interior element (the root element) and one or more cut elements. We do not make any assumption on the shape of aggregates. As a result of this finite element space definition, aggregation is only active on the boundary. Interior elements that are not in touch with cut elements are not affected. We represent the aggregated mesh with $\mcThag$. In Figure~\ref{fig:agg-1b}, cut elements have been aggregated to interior elements, creating aggregates. The root element is highlighted with a different color in each aggregate. The aggregation algorithm is straightforward and can be found, \emph{e.g.}, in \cite{Badia2018}. The algorithm proposed in this reference is iterative. At each iteration, cut elements are aggregated to an interior element or to a previously aggregated element (after the first iteration). The aggregation should minimize the aggregate size, since this will affect the final accuracy of the solution.

\begin{remark}
  In this discussion, we considered for simplicity that cut elements are ill-posed and must be aggregated to interior elements. However, this naive definition of ill-posed elements is not a requirement of the method and more subtle definitions can be considered. For instance, one can define a threshold volume fraction $\eta^* \in [0,1]$ (see \eqref{eq:ratio}) and mark any element $T_i \in \mathcal{T}_h^{\mathrm{cut}}$ with $\eta_i \le \eta^*$  as ill-posed. The naive case is recovered for $\eta^* = 1$ and the standard Galerkin method with $\eta^* = 0$. One can verify that the condition number and stability bounds hold for this relaxed definition with constants that depend on $\eta^*$. $\eta^* = 1$ is an excellent choice as soon as one wants to {have enough resolution to capture geometrical details}, which is the situation in most cases. For under-resolved situations, \emph{e.g.}, for thin-walled structures in which the thickness scale $t_\epsilon$ is not captured by the background mesh, a more clever choice of $\eta^*$, \emph{e.g.}, $\eta^* \sim \frac{t_\epsilon}{h}$, is required. In practice, the choice of $\eta^*$  is a trade-off between accuracy (aggregate size) and well-posedness (condition number bound) of the resulting linear system. We note that the definition and implementation of AgFEM is independent of $\eta^*$, since this parameter only affects the geometrical algorithm that aggregates elements.
\end{remark}

\subsubsection*{Discontinuous spaces}

Let us consider first the case of discontinuous Galerkin finite element spaces. We denote by $V_h^-$ a discontinuous Galerkin space of a given order. The main reason why standard finite element methods on unfitted meshes fail is that one has no control over $\|u_h\|_{\Omega_h}$. Control over this quantity implies the stability of the method regardless of the cut location. Let us consider a discontinuous Galerkin space on top of $\Omega_h$ (see Figure~\ref{fig:agg-2}). The support of a shape function on a cut element $T \in \mcThcut$ is $T_\Omega$. This intersection depends on the cut location, and its measure can be arbitrarily small. In the limit of vanishing measure, the value of the degree of freedom associated with this shape function does not affect the linear system. Thus, the problem is singular.
\begin{figure}[htpb]
    \begin{subfigure}[b]{0.49\textwidth}
    \centering
    \includegraphics[width=0.7\textwidth]{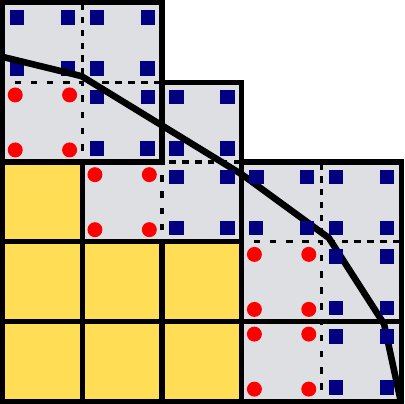}
    \caption{Discontinuous Galerkin constraints}
    \label{fig:agg-2}
    \end{subfigure}
    \begin{subfigure}[b]{0.49\textwidth}
    \centering
    \includegraphics[width=0.7\textwidth]{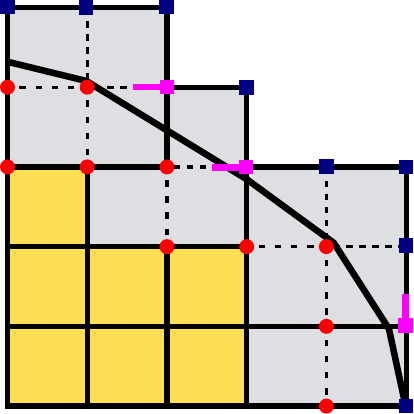}
    \caption{Continuous Galerkin constraints}
    \label{fig:agg-3}
    \end{subfigure}
    \caption{On the left, we depict the degrees of freedom of a piecewise linear discontinuous Galerkin space $\mathcal{Q}^{-}_1(\Omega_h)$. The nodes are deliberately placed in the element interiors to make it evident that they do belong to the element. Red circles represent well-posed degrees of freedom and blue squares ill-posed ones. In this case, for each ill-posed degree of freedom, its constraining degrees of freedom are the well-posed ones in the same aggregate using the expression in (\ref{eq:agg-constraints}). On the right, we glue together degrees of freedom between elements to enforce ${C}^0$ continuity. In order to define the constraints, we must provide the concept of the aggregate owner of ill-posed degrees of freedom. This is obvious for aggregate-interior nodes (there is only one option). For the ill-posed degrees of freedom on aggregate interfaces, lines point to the owner aggregate. With this ownership information, one can now constrain the ill-posed degrees of freedom by the well-posed ones in the aggregate that owns them using the same expression (\ref{eq:agg-constraints}) as above.}
\end{figure}

Given a mesh $\mcTh$, we can define the discontinuous element-wise spaces
\[
    \mathcal{P}_p^{-}(\mcTh) = \prod_{T \in \mcTh} \mathcal{P}_p(T), \qquad \mathcal{Q}_p^{-}(\mcTh) = \prod_{T \in \mcTh} \mathcal{Q}_p(T)
\] 
depending on the local finite element space, namely polynomials up to order $p$ or the tensor product of univariate polynomials of order $p$. The discontinuous Galerkin space $V_{h}^{-}$ on the active mesh can either be $\mathcal{P}_p^{-}(\Omega_h)$ or $\mathcal{Q}_p^{-}(\Omega_h)$. Other spaces, like serendipity finite elements, could readily be considered. The space in Figure~\ref{fig:agg-2} corresponds to $\mathcal{Q}_1^{-}(\Omega_h)$. 

Based on the previous discussion, it is straightforward to check that degrees of freedom on interior elements are not problematic. The corresponding shape function has support on a whole element. Only the degrees of freedom on cut elements are problematic. Discontinuous Galerkin methods are geometrically flexible, \emph{i.e.}, the functional space and the topology of the elements are not connected. Thus, we can simply define the aggregated discontinuous Galerkin space as $\mathcal{P}_p^{-}(\mcThag)$ or $\mathcal{Q}_p^{-}(\mcThag)$. All these shape functions are polynomials, and their support is at least one full interior element. Thus, it can be checked that 
\begin{equation}\label{eq:agg-l2-stab}
\|v_h \|_{\Omega_h} \lesssim \|v_h \|_{\Omega} \qquad v_h \in  V_h^{\mathrm{ag},-}
\end{equation}\vspace{.5mm}
Extended stability of gradients is obtained using the same argument. Gradients of functions in $\Vhagmin$ are polynomials of degree $p-1$ in each aggregate, and the norm on the whole aggregate can be bounded by the one on the root element, getting
\begin{equation}\label{eq:agg-h1-stab}
\|\nabla v_h \|_{\Omega_h} \lesssim \| \nabla v_h \|_{\Omega} \qquad v_h \in  \Vhagmin
\end{equation}\vspace{.5mm}
These relations are also employed in the mathematical analysis of stabilized methods in Section~\ref{sec:stabilityAnalysis}. The constants in the extended stability bounds \eqref{eq:agg-l2-stab} and \eqref{eq:agg-h1-stab} depend on the order of approximation, but are independent of the background mesh size $h$ and the cut location. Thus, the small-cut-element problem is solved when using discontinuous Galerkin methods on aggregated meshes. Similarly, cell merging strategies in the finite volume setting have been proposed, \emph{e.g.}, in \cite{hunt2004adaptive}. Its application to the discontinuous Galerkin method can be found in \cite{Johansson2012,Mller2013}.

The aggregation method preserves the order of accuracy if the aggregate size is proportional to the background mesh size. The ratio between the aggregate size and root element size is related to the geometry of the domain boundary (see \cite{Badia2018} for more details) and can be improved with refinement (for a given definition of the domain boundary $\partial \Omega$). For a fixed mesh representation of the domain boundary (the standard situation), the assumption that the aggregate size is proportional to the background mesh size holds for $h$ small enough.

\subsubsection*{Constraining discontinuous spaces}

\vspace{1mm}

It is straightforward to see that $\Vhagmin \subset V_h^-$. Even though it is not really needed for the discontinuous Galerkin method, one can build $\Vhagmin$ by constraining $V_{h}^-$. The idea is to define an extension operator $\mathcal{E}_{h}^{\mathrm{ag},-}: V_{h}^{\mathrm{in},-} \rightarrow \Vhagmin$, with $V_{h}^{\mathrm{in,-}}$ the restriction of $V_{h}^-$ to $\mcThin$. In the discontinuous Galerkin case, this extension operator can be defined aggregate-wise. The idea is to extend the root-element shape functions to the whole aggregate. This is equivalent to constraining degrees of freedom on cut elements to the degrees of freedom on the respective root elements.   

Let us recall Ciarlet's definition of finite elements; see, \emph{e.g.}, \cite{ErnGuermond}. For nodal finite element methods (continuous or discontinuous), one can define a set of points (so-called nodes) and an associated set of degrees of freedom, corresponding to pointwise evaluation at the nodes. Nodes are chosen such that the degrees of freedom uniquely define functions in $V_{h}^-$ and thus define a basis for its dual space. The dual basis of the degrees of freedom are the so-called shape functions. Thus, there is a correspondence between nodes, degrees of freedom and shape functions. We use the following notation: given a node $\boldsymbol{\alpha}_i$, we represent its corresponding shape function and degree of freedom by $\phi_i$ and $\sigma_i(\cdot)$, respectively. The duality between the degrees of freedom and the shape functions implies 
$\sigma_i(\phi_j)=\delta_{ij}$ with $\delta_{ij}$ the Kronecker delta, and the relation between the nodes and the degrees of freedom in turn implies $\phi_i(\boldsymbol{\alpha}_j)=\delta_{ij}$. By virtue of the dual relation between degrees of freedom and shape functions, the following correspondence holds for all $v_h^-\in{}V_h^-$: $y_i=\sigma_i(v_h^-)$ if and only if $v_h^-=\sum_i y_i \phi_i$.

We can classify the degrees of freedom as \emph{well-posed} (the ones of the root element) and \emph{ ill-posed} (the ones on the cut elements). We denote the set of well-posed (resp., ill-posed) degrees of freedom by $\mathcal{O}_{h}^{\mathrm{wpd}}$ (resp., $\mathcal{O}_{h}^{\mathrm{ipd}}$). Next, we define a map $\mathcal{O}_{h}^{\mathrm{ipd} \to \mathrm{wpd}}(\cdot)$ that for an ill-posed degree of freedom returns the well-posed ones. For discontinuous Galerkin methods, this map is defined aggregate-wise. At each aggregate $T \in \mcThag$, the ill-posed degrees of freedom (blue squares in each aggregate of Figure~\ref{fig:agg-2}) are constrained by the values of its corresponding well-posed degrees of freedom (the ones in the corresponding root element, red circles in Figure~\ref{fig:agg-2}). More specifically, an ill-posed degree of freedom $\sigma_i(\cdot)$ is constrained as
\vspace{1.0mm}
\begin{equation}\label{eq:agg-constraints}
\sigma_i(\cdot) = \sum_{\sigma_j \in \mathcal{O}_{h}^{\mathrm{ipd}\to \mathrm{wpd}}(\sigma_i)} \sigma_i(\phi_j) \sigma_j(\cdot) = \sum_{\sigma_j \in \mathcal{O}_{h}^{\mathrm{ipd}\to \mathrm{wpd}}(\sigma_i)} \phi_j(\boldsymbol{\alpha_i}) \sigma_j(\cdot)
\end{equation}
\vspace{1.0mm}
where the basis functions $\phi_j$ of well-posed degrees of freedom are evaluated outside of the elements on which these are defined in $\sigma_i(\phi_j)$ and $\phi_j(\boldsymbol{\alpha_i})$. Equation \eqref{eq:agg-constraints} introduces constraints on ill-posed values, stating that the values on ill-posed nodes are not free but are determined from the well-posed ones. More specifically, the ill-posed value is equal to the nodal values on the root element multiplied by the value of the corresponding shape function evaluated at the ill-posed node. Doing this, we are constraining the ill-posed degrees of freedom to be the extension of the values in the interior. This extension applied to functions in $V_{h}^-$ defines $\mathcal{E}_{h}^{\mathrm{ag},-}$, whose image is $\Vhagmin$. 

\vspace{1.0mm}
 
\subsubsection*{Aggregation for continuous spaces}

\vspace{1.0mm}

Now, we would like to generalize the extension-based definition of the aggregated discontinuous finite element space to ${C}^0$ Lagrangian spaces. First, we discuss how the active and interior ${C}^0$ spaces can be obtained from the discontinuous space by enforcing ${C}^0$ continuity of piecewise polynomials using a local-to-global map. 
We also discuss why this approach cannot be easily applied to the aggregated meshes. Instead, we introduce the concept of ownership of ill-posed degrees of freedom on aggregates, and define an extension operator from well-posed to ill-posed degrees of freedom that preserves ${C}^0$ continuity. 

We can readily define the ${C}^0$ Lagrangian finite element space as a subspace of the discontinuous space as $V_h = V_h^- \cap {C}^0(\Omega_h)$, and its restriction to interior elements is represented with $V_{h}^{\mathrm{in}}$. In standard finite element methods, one can enforce ${C}^0$ continuity by using the previously introduced one-to-one relation between nodes, degrees of freedom, and shape functions. A shape function is different from zero only on its corresponding node, and zero on all the other nodes, such that the degree of freedom is just the evaluation on its corresponding node. ${C}^0$ continuity is enforced by the following equivalence class: two degrees of freedom must have the same value (irrespective of the element) if their corresponding nodes are in the same spatial point. Doing this, we glue together degrees of freedom from different elements. Figure~\ref{fig:agg-3} illustrates the result after gluing together the element-wise degrees of freedom in Figure~\ref{fig:agg-2}. This enforcement of continuity is amenable for implementation: one defines an element-wise matrix assembly and a local-to-global index map for degrees of freedom.

Unlike discontinuous Galerkin methods, this construction only holds for particular element-wise polynomial spaces and element topologies. First, it is not straightforward to implement such continuity in aggregated meshes. Second, it can destroy the approximation properties of $\Vhag$. Let us take a look at an ill-posed degree of freedom on the interface between two aggregates (purple nodes in Figure~\ref{fig:agg-3}). In a straightforward extension of the constraints in \eqref{eq:agg-constraints}, such a degree of freedom would be constrained by the multiple aggregates that contain it. 
This, together with the enforcement of ${C}^0$-continuity, would couple the degrees of freedom between separate aggregates, and thereby could ruin approximation properties. In extreme cases, it can lead to over-constrained systems. This \emph{locking\/} phenomenon has been analyzed in detail in \cite{badia2021linking}.

The aggregated finite element method in \cite{Badia2018} was designed to solve this locking phenomenon and provide an accurate ${C}^0$ Lagrangian finite element space on unfitted meshes that is easy to implement. The idea is to introduce the concept of \emph{ownership\/} for ill-posed degrees of freedom. For each ill-posed global degree of freedom in $\Vhag$, one assigns one of the aggregates in $\mcThag$ containing this degree of freedom as the owner. The choice of the owner is arbitrary and does not affect the main properties of the method. We can define a map $\mathcal{O}_{h}^{\mathrm{ipd} \to \mathrm{wpd}}$ using the fact that each aggregate contains only one interior element. Given an ill-posed degree of freedom, its corresponding well-posed constraining degrees of freedom are those of the root element of the aggregate owner. We illustrate this construction in Figure~\ref{fig:agg-3}. The purple elements are the ones that belong to more than one aggregate (the other ones are straightforward). For these degrees of freedom, the small rectangle points to the aggregate owner of the degree of freedom. With this construction, we can now constrain global ill-posed degrees of freedom using the same constraints as for discontinuous Galerkin methods (\ref{eq:agg-constraints}). 

These constraints are local and define an extension $\mathcal{E}_{h}^{\mathrm{ag}}: V_{h}^{\mathrm{in}} \rightarrow V_{h}^{\mathrm{ag}}$. In fact, the aggregated finite element space is the image of this extension operator. The method satisfies the desired approximation properties and the extended $L^2(\Omega_h)$ and $H^1(\Omega_h)$ stability in (\ref{eq:agg-l2-stab}) and (\ref{eq:agg-h1-stab}); see \cite{Badia2018} for more details. We also refer to \cite{badia-robust-high-order} for an alternative definition of the discrete extension operator that relies on interpolation for high-order polynomial bases and to \cite{Marussig2017,Marussig2018,Burman2022Splines} for the extension of higher-order continuous (isogeometric or spline-based) discretizations. The definition of a space-time discrete extension operator for moving domains and interfaces can be found in \cite{badia-space-time}.

\subsubsection*{Implementation aspects}

The aggregation algorithm can readily be implemented in a finite element code that supports mesh adaptivity. First, local element matrices are computed as usual. Next, the constraints for the aggregated finite element method are direct constraints, \emph{i.e.}, constraints which are enforced in the local-to-global assembly by expressing the ill-posed degrees of freedom in terms of well-posed ones to eliminate them from the system. 

The distributed-memory implementation of the aggregated finite element method has been considered in \cite{Verdugo2019}. The proposed implementation starts with a standard distributed implementation of the finite element method that relies on a sub-mesh partition (with gluing info, \emph{e.g.}, via one layer of neighbor elements). First, one must parallelize the aggregation algorithm. The parallel implementation of this step is straightforward; one needs to perform nearest neighbor communications at the end of each iteration of the aggregation algorithm to propagate information. The result is identical to the serial version (see \cite[\S 3.2]{Verdugo2019}).

In general, the aggregation strategy is such that aggregates can have support on multiple processors. Thus, constraints in (\ref{eq:agg-constraints}) cannot be computed locally. To compute these constraints in a parallel environment, one requires information from the root element. The cut element to root element map (and its processor Id) is a by-product of the parallel aggregation step. This way, it is easy to define the information that each processor must receive from other processors to compute constraints. However, the inverse map that provides the cut elements constrained by a given root element is not straightforward. This map is required to prepare the data that needs to be sent to other processors. The parallel inverse path reconstruction algorithm in~\cite[\S 3.5]{Verdugo2019} generates this information.

\subsubsection*{Adaptive meshes}

The size of the aggregates on the boundary can harm the accuracy of the method. One way to limit the size of aggregates is via mesh refinement. However, local mesh refinement in continuous finite elements is not straightforward. In the resulting mesh so-called hanging nodes appear, which must be constrained by regular nodes to keep ${C}^0$ continuity. The nature of this constraint is analogous to the ones defined in (\ref{eq:agg-constraints}) for aggregated finite elements.

The combination of mesh adaptivity and aggregation constraints is not straightforward. First, the definition of ill-posed versus well-posed degrees of freedom is more complex. A degree of freedom that only belongs to cut elements can be well-posed because it constrains well-posed hanging degrees of freedom. The right intertwining of these two sets of constraints has been analyzed in \cite{Badia_2021-1}. This work proposed a two-step algorithm to construct the discrete extension operator that carefully mixes aggregation constraints of problematic degrees of freedom and standard hanging degree of freedom constraints. Following this approach, the aggregated finite element space is defined as a discrete extension with well-defined linear constraints.

\subsubsection*{Extension to other problems}

The aggregated spaces can be used to discretize elliptic and parabolic problems (combined with some time-stepping scheme). These schemes have, for example, been applied to nonlinear solid mechanics in \cite{Badia2021}. The extension to interface problems is possible by defining independent aggregated spaces on both sides of the interface \cite{Neiva_2021}. In this work continuity of traces is weakly enforced (as Dirichlet data), which leads to robust schemes that can handle high-contrast problems.

The extension of aggregated finite elements to indefinite systems has been analyzed in \cite{Badia2018b}. We note that its application to stabilized finite element formulations that transform the indefinite system into a definite one is straightforward~\cite{codina2018variational}. The stability of indefinite problems relies on so-called inf-sup conditions. These conditions are stringent and are only valid for very specific \emph{i.e.}, finite element spaces. For instance, the velocity and pressure finite element spaces in the (Navier\nobreakdash--)Stokes problem have to match to satisfy this condition. Aggregation on these spaces does not generally preserve inf-sup stability. The aggregated mixed finite element method for the Stokes problem in~\cite{Badia2018b} starts with the inf-sup stable pair $\boldsymbol{Q}_p(\mcThin) \times \mathcal{P}_{p-1}^{-}(\mcThin)$, \emph{i.e.}, the velocity resides component-wise in~$\mathcal{Q}_p(\mcThin)\cap{C}^0(\mcThin)$ and the pressure is a discontinuous function 
in~$\mathcal{P}_{p-1}(\mcThin)$. The extension defined above for continuous and discontinuous spaces leads to a pair that does not satisfy the inf-sup condition. Instead, a modified extension of the velocity components is proposed in~\cite{Badia2018b}, which relies on serendipity finite elements. In some specific situations, additional pressure jump stabilization terms must also be added.

Aggregation has also been applied to enable stable explicit time stepping, such as in, \emph{e.g.}, \cite{burman2020explicit} for the wave equation.

\subsubsection*{Solving the linear system} 

One of the main motivations of aggregated finite element methods is to produce a well-posed discrete system. For second-order elliptic problems, one can prove the same condition number bounds as for boundary-fitted formulations. More specifically, the condition number bound is of the order of $h^{-2}$; see~\cite[Corollary~5.9]{Badia2018}. Thus, the condition number is independent of the mesh location. As a result, one can apply standard direct or iterative solvers and preconditioning techniques. However, the sparsity pattern of the matrix is different from standard finite elements. As in adaptive finite elements, the sparsity pattern is affected by the constraints.

The linear solver step in aggregated finite element methods has been studied in \cite{Verdugo2019}. The authors propose a row-wise linear algebra distribution layout for the resulting matrix in \cite[\S 3.6]{Verdugo2019} and minimize the amount of inter-processor communications. The authors use the parallel implementation of aggregated finite elements in $\mathtt{FEMPAR}$ \cite{Badia2018-fempar}. They apply a standard algebraic multigrid preconditioner (called $\mathtt{GAMG}$) from the $\mathtt{PETSc}$ library \cite{petsc-user-ref} using a standard configuration. The results in \cite[\S 4]{Verdugo2019} show excellent scalability and optimality properties for this standard solver applied to the aggregated finite element method. The aggregated results are very similar to the results obtained with boundary-fitted meshes. The largest problems that are considered exceed $10^9$ elements on 16,464 processors. From these numerical experiments, it is observed that the additional coupling between boundary degrees of freedom does not affect a standard algebraic multigrid preconditioner. 

\subsubsection*{A numerical example}

This section is a summary of the numerical experiments in \cite{Verdugo2019}, in which the aggregated finite element method is considered. Herein, we are particularly interested in the linear solver step when using the aggregated finite element method. We refer the interested reader to \cite{Verdugo2019}, in which one can find a more thorough numerical experimentation, including, \emph{e.g.}, the time spent in all stages of the simulation and their scalability or the effect of some algorithmic parameters on the results.

\def\FEMPAR{{\texttt{FEMPAR}}}
\def\dealii{{\texttt{deal.II}}}
\def\p4est{{\texttt{p4est}}}
\def\t8code{{\texttt{t8code}}}
\def\fenics{{\texttt{FEniCS}}}
\def\petsc{{\texttt{PETSc}}}
\def\trilinos{{\texttt{TRILINOS}}}

The motivation for this test is to show that the aggregated finite element method leads to a well-posed discrete system for which one can readily use standard iterative solvers and preconditioners. To this end, we consider the widely available linear solvers available in the \petsc~library \cite{petsc-user-ref}. We use a conjugate gradient method from the \texttt{KSP} module of \petsc, preconditioned with the smoothed-aggregation algebraic multigrid preconditioner \texttt{GAMG} \cite{May2016}. We set up the linear solver by relying as much as possible on the default configuration given by \texttt{GAMG} to effectively show that the aggregated finite element spaces lead to linear systems that can be efficiently solved using standard multigrid tools. The software used for these experiments is the MPI-parallel implementation of the AgFEM  available at \FEMPAR{}~\cite{Badia2018-fempar} linked with \p4est{} v2.0~\cite{burstedde_p4est_2011} for octree mesh handling and $\mathtt{PETSc}$ v3.9.0~\cite{petsc-user-ref} for distributed-memory linear algebra data structures and solvers.

We consider the Poisson equation with Dirichlet boundary conditions weakly imposed using Nitsche's method, as introduced in Section~\ref{sec:method_formulation}. The Nitsche parameter is taken as $\beta=10\,h^{-1}$. When using the standard finite element space without aggregation, this expression for the Nitsche parameter does not necessarily lead to a coercive bilinear form, as explained in Section~\ref{sec:method_formulation}. Therefore, in the results with standard finite element spaces, an element-wise Nitsche parameter that follows from a local generalized eigenvalue problem is employed. Note that, as mentioned previously, this local value of the parameter is not bounded due to the lack of stability of the method.

We  consider hexahedral elements with continuous piecewise trilinear shape functions. The geometry in the experiments is the complex body depicted in Figure~\ref{fig:geom}, which is often considered in the literature of immersed finite element methods; see \cite{Burman2015}. The bounding box used to define the background mesh is the unit cube $[0, 1]^3$, which is also depicted in Figure~\ref{fig:geom}. We consider background meshes of hexahedral elements generated and partitioned in parallel with the $\mathtt{p4est}$ library. The test is performed for three different local loads (referred to as ``load 1", ``load 2" and ``load 3") with $20^3$, $30^3$, and $40^3$ elements per parallel subdomain, respectively. We generate the meshes by recursive subdivision of the unit cube. The finest mesh considered in the examples has $1,073,741,824$ elements (corresponding to refinement level 10), and it is partitioned into $16,777$ subdomains which are mapped to the same number of MPI tasks.

\begin{figure}[ht!]
  \centering
    \includegraphics[width=0.45\textwidth]{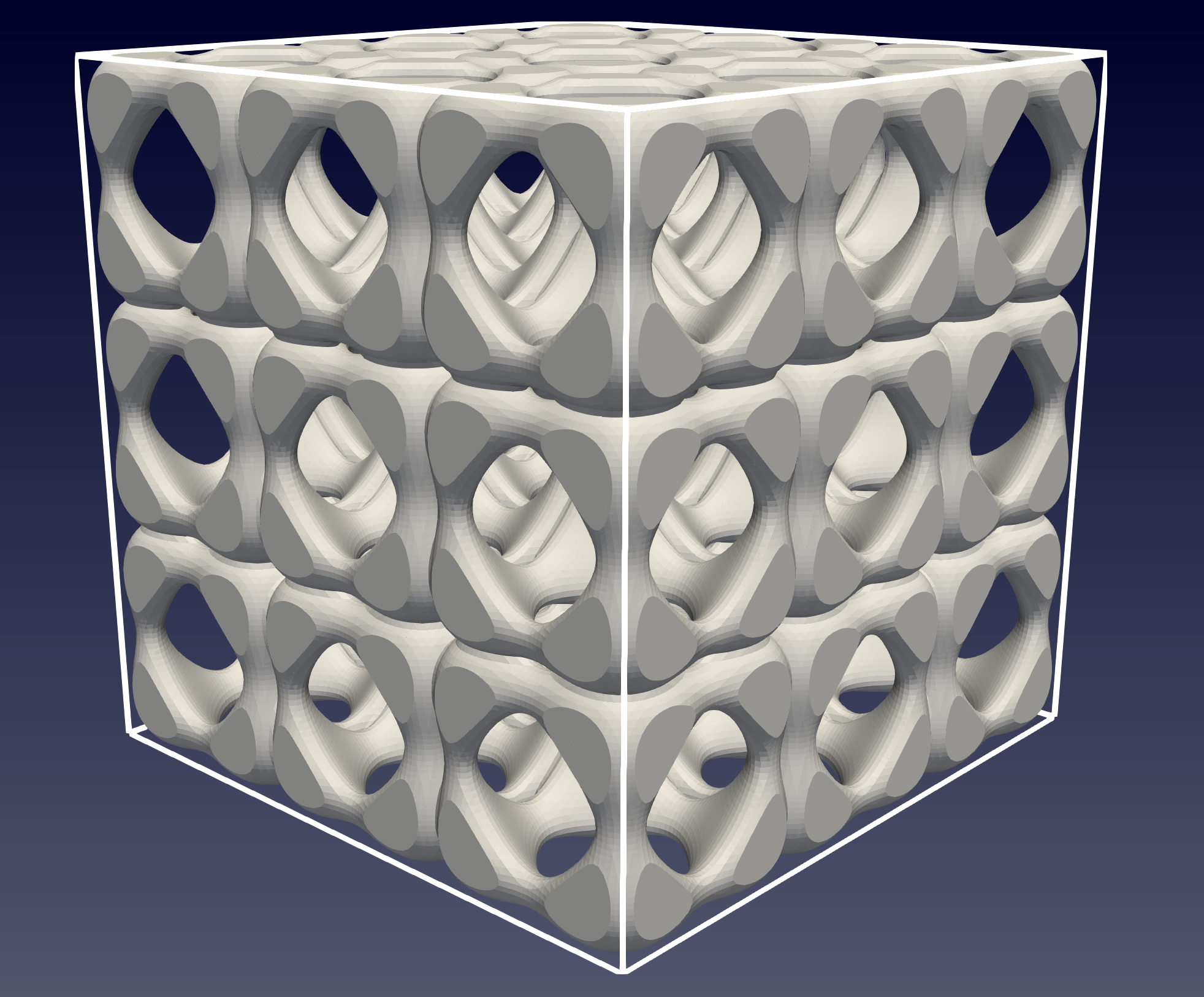}
\caption{View of the geometry and the bounding box considered in the numerical example. Figure reproduced from \cite{Verdugo2019}.}
\label{fig:geom}
\end{figure}

First, we analyze the impact of using aggregated finite-element or standard finite-element spaces on the number of solver iterations; see Figure~\ref{fig:cg-iter-agg-vs-std-c}. This figure conveys that the use of aggregated finite element spaces is beneficial 
(and generally indeed essential) to achieve good performance of the linear solver. For the standard finite element spaces, some 
results are missing in Figure~\ref{fig:cg-iter-agg-vs-std-c}. These correspond to cases in which the linear solver was not able to provide a converged solution. This shows that standard finite element spaces are not reliable in immersed discretization methods. In contrast, aggregated finite element spaces allow one to effectively solve the underlying linear systems. The results obtained with aggregated finite element spaces (blue lines) are very close to the expected optimal performance of multigrid methods (\emph{i.e.}, number of linear solver iterations is asymptotically independent of the problem size).
Figure~\ref{fig:time-solver} reports the wall clock time spent in the linear solver step, separated into time spent in the solver setup and in the solver run. 
The solver wall clock time increases only moderately as the problem size increases.

\begin{figure}[ht!]
  \centering
    %\vspace{1em}
  \begin{subfigure}{0.45\textwidth}
    \includegraphics[scale=1]{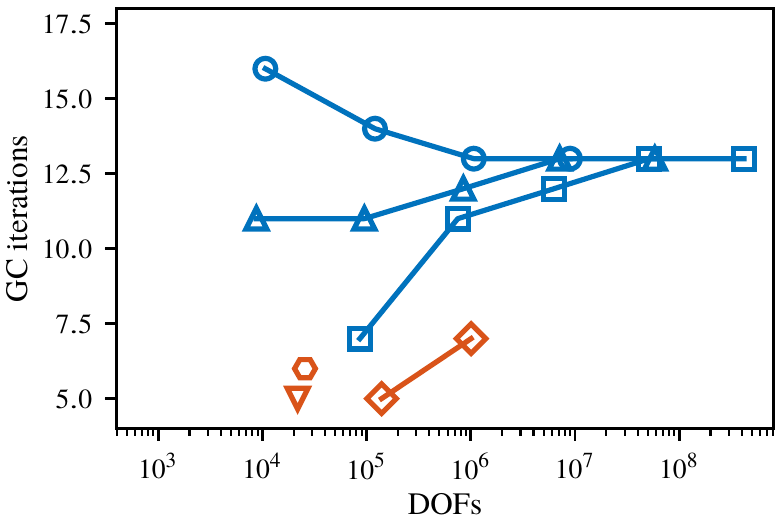}\caption{Solver iterations}
    \label{fig:cg-iter-agg-vs-std-c}
  \end{subfigure}
    \hspace{3em} 
  \begin{subfigure}{0.45\textwidth}
    \includegraphics[scale=1]{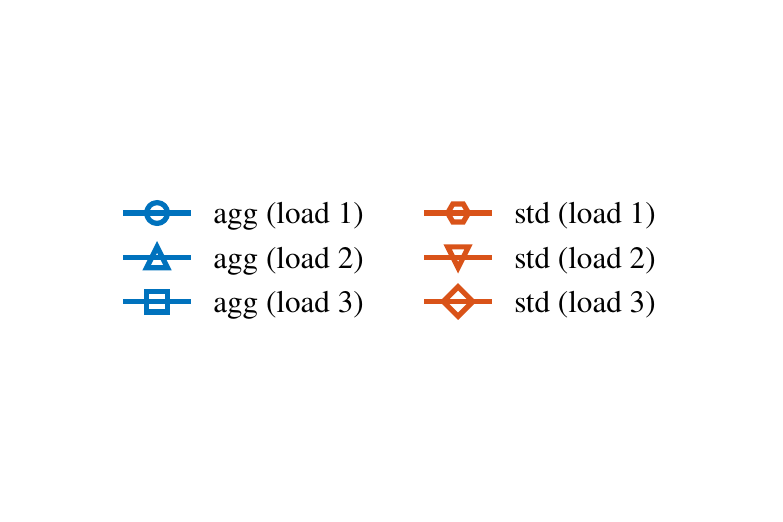}
  \end{subfigure}

    \begin{subfigure}{0.45\textwidth}
    \includegraphics[scale=1]{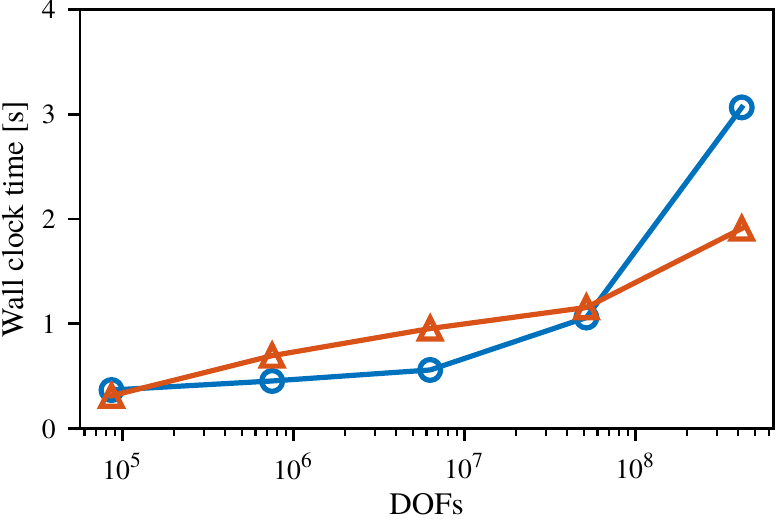}
    \caption{Wall clock time}
    \label{fig:time-solver}
  \end{subfigure}
      \hspace{3em}
      \begin{subfigure}{0.45\textwidth}
    \includegraphics[scale=1]{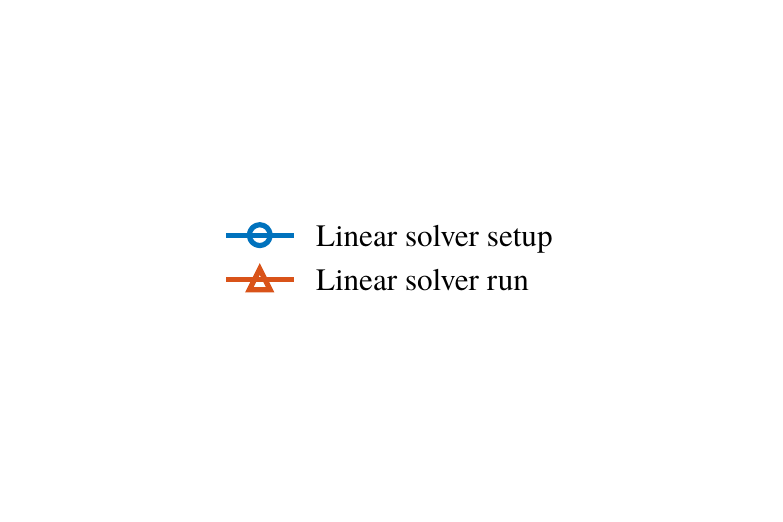}
  \end{subfigure}    

\caption{Influence of standard or aggregated finite element spaces on the linear solver behavior. The figure on the top shows the conjugate gradient (CG) iterations versus problem size for three different loads per processor. At the bottom, we show the wall clock time for the two main linear solver phases (setup and run) versus problem size for aggregated finite element spaces and load 3. Figures reproduced from \cite{Verdugo2019}.}
\label{fig:cg-iter-agg-vs-std}
\end{figure}

\subsection{Ghost-penalty stabilization}\label{sec:ghost}

In this section, we present common techniques for the weak stabilization of immersed finite element methods, often referred to as the \emph{ghost penalty} and generally associated with the \emph{Cut Finite Element Method} (CutFEM) \cite{Burman2010,BurmanHansbo2012,Burman2015}. In the ghost-penalty approach, additional terms are added to the bilinear form which provide control over the solution on the elements that intersect the boundary, while the original approximation space $\Vh$ is unmodified. These terms are denoted by $s_h(v_h,w_h)$ and provide a contribution to the operator norm in the form of $\| v_h \|^2_{s_h} = s_h(v_h,v_h)$. The underlying idea is to control the variation of the approximation function across neighboring elements, such that the solution on small cut elements is controlled by the solution on interior elements. A standard way to achieve this is by controlling the jump in the normal derivatives across faces, by adding appropriately scaled stiffnesses to these jumps. Another approach is to add a volumetric penalization to the difference between a function itself and the extension of the function on a neighboring element. In the subsequent sections, both approaches are discussed.

\clearpage

\begin{figure}[htpb]
    \centering
    \includegraphics[width=0.35\textwidth]{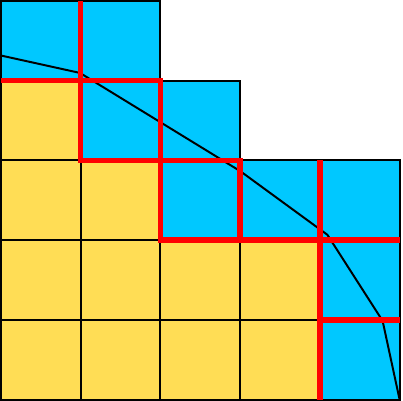}
    \caption{Set of ghost-penalty faces $\mcFhgh$ marked in red. These faces are interior faces of the active mesh $\mcTh$ (yellow and blue elements) and belong to at least one cut element in $\mcThcut$ (blue elements).}
    \label{fig:agg-4}
\end{figure}

\subsubsection*{Face-based stabilization} 
The most common stabilization term for cut elements is the face-based ghost penalty~\cite{BurmanHansbo2012,Burman2015}, defined as
\begin{align}\label{eq:shface}
s_h(w_h,v_h) = \sum_{F \in \mcFhgh}  \sum_{j=1}^{p} \tau_j h^{2j - 1}  (\llbracket\partial_n^j w_h \rrbracket,\llbracket\partial_n^j v_h \rrbracket)_F
\end{align}
where $\tau_j$ are positive constants, $\mcFhgh$ is the set of all interior faces that are shared by an element in $\mcThcut$ (see Figure~\ref{fig:agg-4}), $(\cdot,\cdot)_F$ indicates the inner product over $F$, and 
\begin{equation}
\llbracket\partial_n v_h \rrbracket = \partial_{n_1} v_{h,1} + \partial_{n_2} v_{h,2}
\label{eq:jumpdef}\end{equation}
is the jump in the normal gradient across the face $F$. In this definition of the jump on face $F$, $n_1$ and $v_{h,1}$, and $n_2$ and $v_{h,2}$, correspond to the exterior unit normal and the function $v_h$ on the elements $T_1$ and $T_2$ that share face $F$, respectively. Note that, therefore, $n_1 = -n_2$. With this penalty on the jumps in the normal derivatives between two neighboring elements, the norm of a function $v_h \in \Vh$ on one of the elements can be controlled by the norm on the other. Accordingly, gradients on cut elements are controlled by gradients on full elements in the interior.

The parameters $\tau_j$ in the stabilization terms must be chosen in an appropriate way. With too small parameters, the effect is negligible, while with too large parameters, degrees of freedom can be constrained too strongly, potentially degenerating the accuracy of the approximation \cite{Dettmer2016}. 
On Dirichlet boundaries, the required value to provide coercivity of the bilinear form correlates inversely with the Nitsche parameter.
On Neumann boundaries, coercivity of the bilinear form does not depend on the stabilization term.
Nevertheless, stabilization can still be applied to the Neumann boundary in order to repair the conditioning problems discussed in Section~\ref{sec:conditioningAnalysis}, for which a scaling with $h^{2j + 1}$ instead of $h^{2j - 1}$ in \eqref{eq:shface} suffices \cite{Hansbo2017}.

To elaborate the ghost-penalty approach, let us for simplicity consider the case of linear elements, \emph{i.e.}, $p=1$. It then holds that
\begin{align}\label{eq:face-pair}
\| \nabla v_h \|^2_{T_1} \lesssim \| \nabla v_h \|^2_{T_2} + h \| \llbracket \partial_n v_h \rrbracket \|^2_F, \qquad
\| v_h \|^2_{T_1} \lesssim \| v_h \|^2_{T_2} + h^3 \| \llbracket \partial_n v_h \rrbracket \|^2_F, \qquad v_h \in \Vh
\end{align}
To derive these estimates, we consider the element-wise restrictions $v_{h,i}=v_h|_{T_i}$ ($i\in\{1,2\}$), both extended onto $T_1 \cup T_2$; see Figure~\ref{fig:ghost-elem}. The second estimate in~(\ref{eq:face-pair}) follows from
\begin{equation}\begin{aligned}
\| v_{h,1} \|^2_{T_1} &\lesssim \| v_{h,1} - v_{h,2} \|^2_{T_1} + \| v_{h,2} \|^2_{T_1} 
\lesssim \| (\boldsymbol{\xi} - \boldsymbol{\xi}_F) \cdot \nabla (v_{h,1}- v_{h,2}) \|^2_{T_1} + \| v_{h,2} \|^2_{T_2} 
\\
& \lesssim h^3 \|\partial_{n_i} (v_{h,1}- v_{h,2})\|^2_F+ \| v_{h,2} \|^2_{T_2}  
= h^3 \|\llbracket\partial_n v\rrbracket \|^2_F+ \| v_{h,2} \|^2_{T_2}  
\end{aligned}\label{eq:face_derivation}\end{equation}
with $\partial_{n_i}$ indicating either of the normal derivatives. Here we first added and subtracted $v_{h,2}$ and used the triangle inequality. We then used the Taylor expansion $v_{h,1} - v_{h,2} = (\boldsymbol{\xi} - \boldsymbol{\xi}_F) \cdot \nabla (v_{h,1}- v_{h,2})$ at $\boldsymbol{\xi}$ with point $\boldsymbol{\xi}_F \in F$ the projection of $\boldsymbol{\xi}$ along the normal direction, which holds since $v_{h,1} - v_{h,2} = 0$ on~$F$ by virtue of the continuity of~$v_h$. Additionally, we used the estimate $\|v_{h,2} \|_{T_1} \lesssim \| v_{h,2} \|_{T_2}$ which holds since $v_{h,2}$ is polynomial. The first estimate follows in the same way, but is simpler since the gradients are piecewise constant, leading to an $h$ scaling factor instead of the $h^3$ factor. 

\begin{figure}
     \centering
    \begin{tikzpicture}
        \node[anchor=south west] at (0mm,0mm) {\includegraphics[width=80mm]{./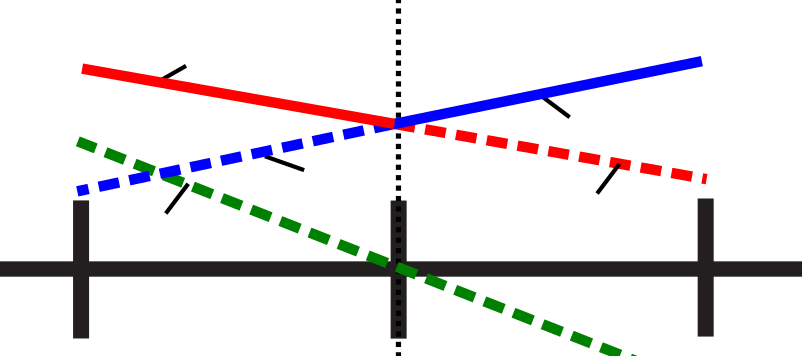}};
        \node at (25mm,6.5mm) {$T_1$};
        \node at (57mm,6.5mm) {$T_2$};
        \node at (27.5mm,30.5mm) {$v_{h,1} \, (=v)$};
        \node at (18.2mm,14mm) {$v_{h,1}-v_{h,2}$};
        \node at (43.5mm,19.5mm) {$v_{h,2} \, (\text{extension})$};
        \node at (65.5mm,24.5mm) {$v_{h,2} \, (=v)$};
        \node at (59mm,15.5mm) {$v_{h,1} \, (\text{extension})$};
        \node at (43mm,33mm) {$F$};
    \end{tikzpicture}      
\caption{Illustration of the extension and difference of $v_{h,1}$ and $v_{h,2}$ in \eqref{eq:face_derivation}.}
\label{fig:ghost-elem}
\end{figure}

Using the pairwise bounds in~(\ref{eq:face-pair}) -- either directly or through a chain of neighboring elements -- the degrees of freedom on cut elements in $\mcThcut$ can be controlled by the degrees of freedom on interior elements $\mcThin$ via the ghost penalty
\begin{equation}
\label{eq:localbnds_qz1}
\| \nabla v_h \|^2_{\Omega_h} \lesssim \| \nabla v_h \|^2_{\Omega} + s_h(v_h,v_h), \qquad
\| v_h \|^2_{\Omega_h} \lesssim \| v_h \|^2_{\Omega} + h^2 s_h(v_h,v_h), \qquad v_h \in \Vh
\end{equation}
To prove the first bound we proceed as
\begin{equation}\begin{aligned}
\| \nabla v_h \|^2_{\Omega_h} & = \sum_{T \in \mcTh} \| \nabla v_h \|^2_{T} \\ 
& = \sum_{T \in \mcThin} \| \nabla v_h \|^2_{T} + \sum_{T \in \mcThcut} \| \nabla v_h \|^2_{T} 
\\ 
& \lesssim \sum_{T \in \mcThin} \| \nabla v_h \|^2_{T} + \Big( \sum_{T \in \mcThin} \| \nabla v_h \|^2_{T} + s_h(v_h,v_h) \Big)
\\ 
& \lesssim \| \nabla v_h \|^2_\Omega + s_h(v_h,v_h) \label{eq:stab-energy-control}
\end{aligned}\end{equation}
The second bound in~(\ref{eq:localbnds_qz1}) can be established similarly. Note that the constants in these estimates will depend on the lengths of the paths connecting cut elements to interior elements, which is related to the properties and the resolution of the boundary, as well as the properties of the mesh. For smooth boundaries and locally quasi-uniform meshes with sufficiently small mesh sizes, one can show that the length of these paths is uniformly bounded.

Conversely, the stabilization term can be bounded in terms of the $L^2$\nobreakdash-norm of the gradient of the approximation on the active domain~$\Omega_h$
\begin{align}\label{eq:ghost-inverse}
s_h(v_h,v_h) \lesssim \| \nabla v_h \|_{\Omega_h} \qquad v_h \in \Vh
\end{align}
which follows from
\begin{align}
h^{2 j - 1} \| \llbracket \partial^j_n v_h \rrbracket \|^2_F 
\lesssim 
\sum_{i=1}^2
h^{2 j - 1} \| \partial^j_n v_{h,i} \|^2_F 
\lesssim 
\sum_{i=1}^2
h^{2 j - 2} \| \nabla^j v_{h,i} \|^2_{T_i}
\lesssim 
\sum_{i=1}^2
 \| \nabla v_{h,i} \|^2_{T_i} = \| \nabla v_h \|_{T_1 \cup T_2}^2
\end{align}
where we first employed the triangle inequality, then a trace inverse estimate to pass from the face $F$ to the neighboring elements $T_1$ and $T_2$ \cite{Warburton2003}, and finally another inverse estimate to arrive at a bound in terms of~$\nabla v_h$~\cite{ErnGuermond}. Using the same inverse estimate it follows that 
\begin{equation}\label{eq:ghost-inverse-l2}
h^2 \| v_h \|^2_{s_h} \lesssim h^2 \| \nabla v_h \|^2_{\Omega_h} \lesssim \| v_h \|^2_{\Omega_h}
\end{equation}

The estimates~\eqref{eq:localbnds_qz1}, \eqref{eq:ghost-inverse}, and \eqref{eq:ghost-inverse-l2} are central to the coercivity and continuity of the (stabilized) bilinear form with a (global) Nitsche parameter that scales inversely with the mesh size of the \emph{background} grid. The estimates are also essential for the condition number bounds derived in Section~\ref{sec:stabilityAnalysis}. The principal notion of the ghost-penalty method is encapsulated in the bounds in~\eqref{eq:localbnds_qz1}, which convey that the ghost penalty provides control over the solution on the entire active domain including the fictitious part. The bounds~\eqref{eq:ghost-inverse} and \eqref{eq:ghost-inverse-l2} in turn impart that the ghost-penalty term is suitably well behaved.

%\vspace{-2mm}

\subsubsection*{Element- and patch-based stabilization forms}

%\vspace{-.5mm}

An alternative to the face-based penalty is a penalty that is based on volumetric integrals. This is convenient for higher-order polynomials, since it avoids the computation of higher-order derivatives at element interfaces, which is a non-standard operation in many finite element codes. An overview and analysis of different types of such ghost-penalty formulations is presented in \cite{Burman2022Weak}.

A stabilization term based on element integrals can be defined according to any of the two forms
%\vspace{-.5mm}
\begin{align}\label{eq:shelem}
s_{h,0}^{\mathrm{elem}}(w_h,v_h) = \sum_{F \in \mcFhgh} \tau_0 h^{-2} ([ w_h ],[ v_h ])_{\mcTh(F)}, \quad \textrm{or} \quad
s_{h,1}^{\mathrm{elem}}(w_h,v_h) = \sum_{F \in \mcFhgh} \tau_1 ([\nabla w_h ],[\nabla v_h ])_{\mcTh(F)}%\vspace{-1mm}
\end{align}
%\vspace{-.5mm}
where for each face $F\in \mcFhgh$, $\mcTh(F)$ is the pair of elements that share $F$ and
%\vspace{-.5mm}
\begin{align}\label{eq:shpatch}
[ v_h ] = v_{h,1} - v_{h,2}, \qquad [\nabla v_h ] = \nabla v_{h,1} - \nabla v_{h,2}
\end{align}
%\vspace{-.5mm}
on $T \in \mcTh(F) = \{ T_1, T_2\}$ with polynomials $v_{h,1}$ and $v_{h,2}$ extended in the canonical way; see Figure~\ref{fig:ghost-elem}. Note that, opposed to the existing volumetric integrals in the weak formulation introduced in Section~\ref{sec:method_formulation}, the integrals in the penalization are taken over the entire background elements including the fictitious parts and are not restricted to the physical domain $\Omega$. The form in \eqref{eq:shelem} is presented in \cite{Preuss2018,Lehrenfeld2019} and is obtained by example formulation 2 in \cite[\S 2.4]{Burman2022Weak} in case the element pairs are chosen based on the faces in $\mcFhgh$.

We may also consider stabilization based on patches or macro elements, which is the form that was presented with the introduction of the ghost penalty in \cite{Burman2010} and, for a certain choice of patches, is also considered in \cite{Lehrenfeld2019}. With this form of stabilization, each cut element is associated with a patch of neighboring elements that contains at least one element in the interior. We may then penalize the difference between the finite element function and a global polynomial on the patch. More precisely, let $M_T$ be a patch of elements containing $T$, then we can define
%\vspace{-.5mm}
\begin{equation}\begin{aligned}
s_{h,0}^{\mathrm{patch}}(w_h,v_h) & = \sum_{T\in \mcThcut} \tau_0 h^{-2}( w_h - P_p w_h, v_h - P_p v_h )_{M_T} \\
s_{h,1}^{\mathrm{patch}}(w_h,v_h) & = \sum_{T\in \mcThcut} \tau_1 (\nabla (w_h - P_p w_h), \nabla (v_h - P_p v_h) )_{M_T}
\end{aligned}\label{eq:patchbased}\end{equation}
%\vspace{-.5mm}
where $P_p: V_h|_{M_T} \rightarrow \mathcal{P}_p(M_T)$ is the $L^2$-projector or the Ritz projector, respectively.
Note that by strictly following this formulation, patches containing multiple cut elements will be contained in the summation multiple times. In practice, every patch that contains cut elements is included only once, or the boundary is simply covered by a set of non-overlapping patches such as in \cite{Larson2021}.

Ghost-penalty stabilization can also be combined with the concept of aggregation to design weakly aggregated schemes, as proposed in \cite{Burman2022Weak,badia2021linking}. In these schemes, one makes use of the standard finite element space $\Vh$ on the active mesh, and instead of strongly imposing constraints via an extension operator of interior degrees of freedom, the difference between $\Vh$ and $\Vhag$ is weakly penalized. Recall the discrete extension operator $\mathcal{E}_{h}^{\mathrm{ag}}: V_{h}^{\mathrm{in}} \rightarrow V_{h}^{\mathrm{ag}} \in V_h$ defined in Section~\ref{sec:aggregation} to define a mapping $\Phag:\Vh \ni v_h \mapsto \mathcal{E}_{h}^{\mathrm{ag}} (v_h|_{\mcThin}) \in \Vhag$. The difference $v_h - \Phag v_h$ can then be penalized similarly as in~\eqref{eq:patchbased}

\clearpage

\begin{equation}\begin{aligned}
s_h^{\mathrm{ag}}(w_h,v_h) &= \sum_{\mcTh}  \tau_0 h^{-2} \left( w_h - \Phag w_h  , v_h - \Phag v_h \right)_T\\
s_h^{\mathrm{ag}}(w_h,v_h) &= \sum_{\mcTh}  \tau_1 \left( \nabla (w_h - \Phag w_h)  ,\nabla( v_h - \Phag v_h )\right)_T
\end{aligned}\end{equation}%
These terms can be computed element-wise (in a similar manner as described in Section~\ref{sec:aggregation}) and are similar to local-projection stabilization techniques; see, \emph{e.g.}, \cite{Becker2001Dec,Badia2012Nov}. We note that the (strong) aggregated finite element method of Section~\ref{sec:aggregation}
is formally recovered in the limit $\tau_j \to \infty$. Thus, the stabilization method is also accurate for large values of $\tau_j$. This last statement does not hold for other forms of the ghost penalty, in which a large parameter can constrain too many degrees of freedom, which inhibits the accuracy \cite{Dettmer2016} and is referred to as locking in \cite{Burman2022Weak}. Detailed analyses and experimentation with this type of stabilization can be found in \cite{badia2021linking} and \cite{Burman2022Weak}.

Stabilization techniques based on volumetric integrals have also been developed for multimesh discretizations \cite{Johansson2020,Johansson2019,Johansson2020CMAME} and for problems on manifolds \cite{Burman2018codim}. Additionally, it is noteworthy that with the $H^1$-product and with the projector $P_p = 0$ or $\Phag = 0$, the fictitious domain stiffness that is commonly employed in the finite cell method is recovered~\cite{Nguyen2017}. Fictitious domain stiffness requires $\tau_1 \ll 1$, however, as this stabilization term is not consistent with the original problem. Therefore, with a fictitious domain stiffness, stability and conditioning need to be balanced with consistency.

\subsubsection*{Implementation aspects}
To implement any of the aforementioned stabilization forms, one must be able to find face neighbors of given elements and assemble contributions from pairs of elements sharing a face. For higher-order elements, the face-based ghost penalty requires computation of higher-order derivatives. The element-based formulation may then be preferable. All the stabilization terms lead to additional coupling and, as a consequence, fill-in in the stiffness matrix \cite{Hoang2018,Hoang2019}. Systems stabilized by the ghost penalty have similar condition number bounds as boundary-fitted formulations, and therefore larger systems (\emph{i.e.}, finer grids) yield larger condition numbers and require more iterations in iterative solvers. This can be overcome by combining the ghost penalty with a multigrid solver; see the discussion on this in Section~\ref{sec:preconditioners}.

\subsubsection*{Extension to other problems}

The ghost penalty can also be applied to stabilize time-dependent problems, for which generally the mass form needs to be stabilized as well. In that case the scaling is chosen in such a way that the stabilization terms scale with the mesh parameter in the same way as the mass matrix; see \eqref{eq:ghost-inverse-l2}. Similar to Schwarz preconditioning and aggregation, the ghost penalty can also be applied to provide inf-sup stable discretization of mixed formulations such as that of the Stokes problem; see, \emph{e.g.}, \cite{Massing2014,Hoang2017,Guzman2018Sep,Hoang2019}.
Furthermore, the ghost penalty can also be applied to stabilize discretizations of codimensional manifolds,
such as partial differential equations posed on non-planar surfaces discretized with three-dimensional meshes; see, \emph{e.g.}, \cite{Hansbo2015,Burman2015Beltrami,Burman2016Beltrami,Burman2018codim}.

\subsection{A unified analysis of aggregation and stabilization}\label{sec:stabilityAnalysis}

In this section, we present a framework for the analysis of stable immersed finite element methods. We consider aggregated and ghost-penalty-stabilized techniques at the same time, to identify the common parts and the main differences.
It is recalled from Section~\ref{sec:introSec4} that when Nitsche's method is applied without any stability-enhancing technique, the bilinear form is only coercive with an \emph{a priori} unbounded Nitsche parameter $\beta$. In that case, most of the properties and estimates presented in this section do not apply, and error bounds cannot be provided.
Remark~\ref{rem:comparison} at the end of this section discusses which specific analyses still hold and which no longer hold without any form of stabilization.

\clearpage

\subsubsection*{Methods and formulations}
We recall from~\eqref{eq:immersedfem} and \eqref{eq:immersedOperator} that the standard Nitsche finite element method takes the form
\begin{align}\label{eq:standardnitsche}
\begin{cases}
\text{find } u_h \in \Vh \text{ such that:}\\
a_h(u_h, v_h) = l_h(v_h) \qquad \forall v_h \in \Vh
\end{cases}
\end{align}
with bilinear form $a_h(\cdot,\cdot)$ according to
\begin{align}
a_h(w_h,v_h) = (\nabla w_h,\nabla v_h)_\Omega - (w_h, \partial_n v_h)_{\partial \Omega_D} - (\partial_n w_h, v_h)_{\partial \Omega_D} 
+ (\beta w_h,v_h)_{\partial \Omega_D}
\end{align}
We note that this is consistent with the strong formulation in \eqref{eq:poissonstrong}, \emph{i.e.}, the exact solution $u$ to 
the corresponding continuous problem satisfies \begin{align}\label{eq:consistency}
a_h(u,v_h) = l_h(v_h) \qquad v_h \in \Vh
\end{align}
provided that~$u$ is regular enough, for instance if $u \in H^2(\Omega)$.
To accommodate both approaches in the same analysis we consider the formulation
\begin{align}\label{eq:stab-nitsche}
\begin{cases}
\text{find } u_h \in V_{h,\bigstar} \text{ such that:}\\
a_{h,\bigstar}(u_h, v_h) = l_h(v_h) \qquad \forall v_h \in V_{h,\bigstar}
\end{cases}
\end{align}
where the finite dimensional function space is defined by
\begin{align}
V_{h,\bigstar} = 
\begin{cases}
\Vhag \subset \Vh & \text{(aggregation)}
\\
\Vh & \text{(ghost-penalty stabilization)}
\end{cases} 
\end{align}
and the bilinear form is defined by
\begin{equation}
a_{h,\bigstar}(w_h,v_h) = a_\Omega(w_h,v_h) - (w_h,\partial_n v_h)_{\partial \Omega_D} - (\partial_n w_h,v_h)_{\partial \Omega_D} + (\beta w_h,v_h)_{\partial \Omega_D}
\end{equation}
with
\begin{align}
a_\Omega(w_h,v_h) = 
\begin{cases}
 (\nabla w_h, \nabla v_h)_\Omega   & \text{(aggregation)}
\\
(\nabla w_h, \nabla v_h)_\Omega  + s_h(w_h,v_h) & \text{(ghost-penalty stabilization)}
\end{cases}
\end{align}
For the ghost-penalty method, the stabilization form is included in the definition of $a_\Omega(\cdot,\cdot)$ because the stabilization form $s_h(\cdot,\cdot)$ is naturally paired with the bulk or volumetric term $(\nabla \cdot,\nabla \cdot)_\Omega$ to enable bounds of the form \eqref{eq:face-pair} and \eqref{eq:localbnds_qz1}.

\subsubsection*{Properties}
For the analysis, we start from the following intrinsic properties of aggregation methods and ghost-penalty formulations:
\begin{itemize}
\item There is a constant (hidden in the binary operator~$\lesssim$) such that 
\begin{align}\label{eq:stab}
\| \nabla v_h \|_{\Omega_h} \lesssim a_\Omega(v_h,v_h) = \| v_h \|_{a_\Omega} \qquad v_h \in V_{h,\bigstar}
\end{align}
where as before $\Omega_h = \cup_{T \in \mcTh} T$ is the union of all the active elements, such that $\Omega_h \supset \Omega$. 
See, respectively,  \eqref{eq:agg-h1-stab} and \eqref{eq:stab-energy-control} for these estimates in the context of aggregated and weakly stabilized methods. This property forms the basis of the effectiveness of weakly stabilized and aggregated methods. Essentially, it indicates that the solution on the extended domain $\Omega_h$, including the fictitious parts outside $\Omega$ of active elements, is controlled by $a_\Omega(\cdot,\cdot)$. Consequently, this precludes the very small eigenvalues on small cut elements discussed in Section~\ref{sec:conditioningAnalysis}, and it provides control of the normal gradients which enables bounding the Nitsche parameter $\beta$ from above as discussed in Section~\ref{sec:introSec4} and \ref{sec:stabilityLiterature}. With aggregation this is achieved by directly precluding basis functions with a very small support in the physical domain as discussed in Section~\ref{sec:aggregation}, while with the ghost penalty such functions are given a contribution in $a_\Omega(\cdot,\cdot)$ by $s_h(\cdot,\cdot)$ as discussed in Section~\ref{sec:ghost}.

\item There is an interpolation operator $\pi_h: H^s(\Omega) \rightarrow V_{h,\bigstar}$, $s\geq 2$, such that  
\begin{align}\label{eq:approx}
\| \nabla^m (v - \pi_h v) \|_{\Omega_h} \lesssim h^{s-m} \| v \|_{H^s(\Omega)} \qquad 0\leq m \leq s \leq p+1
\end{align}
we use a continuous extension operator $\mathcal{E}: v \in H^s(\Omega) \rightarrow H^s(\mathbb{R}^d)$ to extend $v$ outside of $\Omega$. For ghost-penalty-stabilized methods, the interpolation operator is generally constructed by first extending the function $v$ outside of $\Omega$ and then applying a standard interpolation operator, for instance the Cl\'{e}ment operator \cite{Clement1975}. For the aggregated finite element space, a specific construction of the extension is used to establish the interpolation estimate \cite{Burman2022Extension,Burman2022Splines}.

\item The stabilization form $s_h(\cdot,\cdot)$ satisfies
\begin{align}\label{eq:approxsh}
\| \pi_h v \|_{s_h}^2 = s_h(\pi_h v,\pi_h v) \lesssim h^{s-1} \| v \|_{H^s(\Omega)} \qquad 1\leq s \leq p+1
\end{align}
This is a consistency assumption on the stabilization form $s_h(\cdot,\cdot)$, formulated in such a way that the stabilization form only acts directly on functions in the finite element space, and acts on other functions~$v$ indirectly via their projection~$\pi_h{}v$. Note that the stabilization form~$s_h(\cdot,\cdot)$ may for instance involve higher-order derivatives, which are not generally well defined for the exact solution or the solution to a dual problem used in an error analysis. This complication is avoided by formulating the consistency condition~\eqref{eq:approxsh} in terms of projections onto the finite element space.

To provide an example we verify this consistency assumption \eqref{eq:approxsh} for the face-based 
stabilization form \eqref{eq:shface} in the case of linear elements. Consider a face $F \in \mcFh$  shared by the two elements $T_1$ and $T_2$. Then for any linear polynomial
$w \in \mathcal{P}_1(T_1 \cup T_2)$ we have
\begin{equation}
h \| [ \partial_n \pi_h v  ] \|^2_F 
= h \| [ \partial_n (\pi_h v - w)  ] \|^2_F
\lesssim \sum_{i=1}^2 \|  \nabla (\pi_h v - w)  \|^2_{T_i} 
\end{equation}
where we used a trace inverse estimate to pass from the face to the elements \cite{Warburton2003}. Using the Bramble-Hilbert lemma (see, \emph{e.g.}, \cite{BrennerScott2008}) we can choose $w$ in such a way that 
\begin{align}
\sum_{i=1}^2  \|  \nabla (\pi_h v - w)  \|^2_{T_i} 
 \leq 
 \sum_{i=1}^2 
 \|  \nabla (\pi_h v - v)  \|^2_{T_i} + \|  \nabla (v - w)  \|^2_{T_i} 
 \lesssim
\sum_{i=1}^2 h^2 \| v \|^2_{H^2(S(T_i))}
\end{align}
where $S(T_i)$ is the union of the set of elements in $\mcTh$ that share a node with $T$.

\end{itemize}
Element aggregation and ghost-penalty stabilization are conceptually different in the manner in which they approach the conditions~\eqref{eq:stab}\nobreakdash--\eqref{eq:approxsh}. Aggregation methods aim to satisfy condition~\eqref{eq:stab} by restricting the approximation to a suitable subspace $\Vhag\subset V_h$. On the other hand, condition~\eqref{eq:approx} demands optimal approximation properties of the (sequence of) aggregated approximation spaces~$\Vhag$ and, hence, the aggregated approximation spaces cannot be too small. Because the stabilization operator $s_h(\cdot,\cdot)$ vanishes in aggregation methods, condition~\eqref{eq:approxsh} is trivially satisfied.
Ghost-penalty stabilization requires that the stabilization term $s_h(\cdot,\cdot)$ is strong enough to comply with condition~\eqref{eq:stab}. On the other hand, condition~\eqref{eq:approxsh} insists that the stabilization cannot be too strong, as this will affect the solution. Because ghost-penalty stabilization imposes no auxiliary conditions on the approximation spaces, condition~\eqref{eq:approx} is trivially satisfied. 

\subsubsection*{Coercivity and Continuity}

We define the following energy norms
\begin{align}\label{eq:energynorm}
\tn v \tn_h^2 &= \| \nabla v \|^2_{\Omega} +  \| h^{-\frac{1}{2}} v \|^2_{\partial \Omega_D} + \| h^{\frac{1}{2}} \partial_n v \|^2_{\partial \Omega_D} 
\\
\tn v_h \tn_{h,\bigstar}^2 &= a_\Omega(v_h,v_h) + \| h^{-\frac{1}{2}} v_h \|^2_{\partial \Omega_D} + \| h^{\frac{1}{2}} \partial_n v_h \|^2_{\partial \Omega_D} 
\end{align}
and note that
\begin{align}
\tn v_h \tn^2_{h,\bigstar} =
\begin{cases}
\tn v_h \tn^2_h & \text{aggregation}
\\
\tn v_h \tn^2_h + \| v_h \|_{s_h}^2 & \text{weak stabilization}
\end{cases}
\end{align}
Note that these norms are different from the $\beta$-norm that was applied in \cite{Prenter2018} and presented in \eqref{eq:immersedenergynorm} in Section~\ref{sec:introSec4}. First, $\tn \cdot \tn_{h,\bigstar}$ employs $h^{-1}$, which is uniformly bounded, instead of $\beta$, which is not bounded in case no stabilization technique is applied. Second, in case of ghost-penalty stabilization, $\tn \cdot \tn_{h,\bigstar}$ also includes a ghost-penalty contribution. Consequently, whereas the $\beta$-norm admits a natural analysis of unstabilized forms, the $\tn v_h \tn_{h,\bigstar}$-norm naturally accompanies weakly stabilized and aggregated formulations. 

\begin{lem}
The form $a_{h,\bigstar}(\cdot,\cdot)$ is continuous on $V_{h,\bigstar} \oplus H^2(\Omega)$
\begin{align}\label{eq:cont}
a_{h,\bigstar}(w,v) \leq \tn w  \tn_{h,\bigstar}  \tn v \tn_{h,\bigstar}  \qquad w,v \in V_{h,\bigstar} \oplus H^2(\Omega)
\end{align}
and, if $\beta$ is chosen large enough, $a_{h,\bigstar}(\cdot,\cdot)$ is coercive on $V_{h,\bigstar}$
\begin{align}\label{eq:coer}
\tn v_h \tn_{h,\bigstar}^2 \lesssim a_{h,\bigstar}( v_h,v_h) \qquad v_h \in V_{h,\bigstar}
\end{align}
\end{lem}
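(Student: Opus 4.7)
The plan is to establish both statements by standard Nitsche-type arguments, where the crucial novel ingredient is the extended stability bound \eqref{eq:stab} that replaces the usual control on $\Omega$ by control on $\Omega_h$. Without stabilization, one has $\|\partial_n v_h\|_{T\cap\partial\Omega_D}^2 \lesssim h_{T_\Omega}^{-1}\|\nabla v_h\|_{T_\Omega}^2$ on small cut elements, forcing $\beta$ to blow up; with stabilization we can instead invoke the trace inverse estimate on the full background element.

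For continuity \eqref{eq:cont}, I would expand $a_{h,\bigstar}(w,v)$ into its four terms and apply Cauchy--Schwarz to each separately. The bulk term is bounded by $\|w\|_{a_\Omega}\|v\|_{a_\Omega}$, the two flux terms are bounded via
\begin{equation*}
(w,\partial_n v)_{\partial\Omega_D} \le \|h^{-\frac12} w\|_{\partial\Omega_D}\,\|h^{\frac12}\partial_n v\|_{\partial\Omega_D},
\end{equation*}
and the Nitsche term is bounded by $\|h^{-\frac12}w\|_{\partial\Omega_D}\|h^{-\frac12}v\|_{\partial\Omega_D}$ since $\beta\lesssim h^{-1}$. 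Summing and using that $(a+b+c+d)\le\sqrt{a^2+b^2+c^2+d^2}\cdot\sqrt{4}$ (or directly grouping the terms symmetrically) yields the bound by $\tn w\tn_{h,\bigstar}\tn v\tn_{h,\bigstar}$ up to an absorbable constant.

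For coercivity \eqref{eq:coer}, set $w=v_h\in V_{h,\bigstar}$ so the two flux terms coincide:
\begin{equation*}
a_{h,\bigstar}(v_h,v_h) = a_\Omega(v_h,v_h) - 2(\partial_n v_h, v_h)_{\partial\Omega_D} + (\beta v_h,v_h)_{\partial\Omega_D}.
\end{equation*}
The cross term is handled by Young's inequality, $2(\partial_n v_h, v_h)_{\partial\Omega_D}\le \epsilon\|h^{\frac12}\partial_n v_h\|_{\partial\Omega_D}^2 + \epsilon^{-1}\|h^{-\frac12}v_h\|_{\partial\Omega_D}^2$. The key step, and the only one that uses the stabilization, is to bound the normal-derivative term. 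On every active background element $T\in\mcTh$, the standard polynomial trace inverse inequality gives $\|h^{\frac12}\partial_n v_h\|_{T\cap\partial\Omega_D}^2 \lesssim \|\nabla v_h\|_T^2$; summing over elements and invoking the extended stability \eqref{eq:stab} yields
\begin{equation*}
\|h^{\frac12}\partial_n v_h\|_{\partial\Omega_D}^2 \;\lesssim\; \|\nabla v_h\|_{\Omega_h}^2 \;\lesssim\; a_\Omega(v_h,v_h).
\end{equation*}
Choosing $\epsilon$ small enough to absorb this into $a_\Omega(v_h,v_h)$, and then choosing $\beta=c\,h^{-1}$ with $c$ large enough to dominate $\epsilon^{-1}\|h^{-\frac12}v_h\|_{\partial\Omega_D}^2$, we obtain $a_{h,\bigstar}(v_h,v_h)\gtrsim a_\Omega(v_h,v_h) + \|h^{-\frac12}v_h\|_{\partial\Omega_D}^2$. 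Finally, the $\|h^{\frac12}\partial_n v_h\|_{\partial\Omega_D}^2$ contribution in $\tn v_h\tn_{h,\bigstar}^2$ is controlled for free by the same bound above, so $\tn v_h\tn_{h,\bigstar}^2\lesssim a_{h,\bigstar}(v_h,v_h)$ follows.

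The main obstacle, and really the only nontrivial point, is justifying the passage from a face of $T\cap\partial\Omega_D$ (possibly of very small measure compared to $h^{d-1}$) back to the full background element $T$ and then to the entire active region $\Omega_h$: this is precisely where \eqref{eq:stab} is indispensable, since without it one would be forced to stay on $T\cap\Omega$ and thereby encounter the blowing-up $h_{T_\Omega}^{-1}$ factor discussed in Section~\ref{sec:introSec4}. Everything else is bookkeeping with Cauchy--Schwarz and Young's inequality. Note also that continuity is stated on the larger space $V_{h,\bigstar}\oplus H^2(\Omega)$, which is why the definition of $\tn\cdot\tn_{h,\bigstar}$ restricted to $V_{h,\bigstar}$ can involve $s_h$ while on the $H^2$ component no stabilization contribution appears; this is consistent because the flux and trace terms in $a_{h,\bigstar}$ acting on an $H^2$ function are controlled by the $h$-weighted trace norms present in $\tn\cdot\tn_h$, and the stabilization form never couples to the exact-solution component in the continuity estimate.
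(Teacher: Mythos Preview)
Your proof is correct and follows essentially the same route as the paper: continuity by Cauchy--Schwarz on each term (using $\beta\lesssim h^{-1}$), and coercivity by Young's inequality on the flux term together with the trace inverse estimate on the full background element and the extended stability bound~\eqref{eq:stab} to absorb $\|h^{\frac12}\partial_n v_h\|_{\partial\Omega_D}^2$ into $a_\Omega(v_h,v_h)$. Your identification of~\eqref{eq:stab} as the indispensable ingredient, and your remark on the role of the composite space in the continuity estimate, are both on point.
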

\begin{proof} Under the notion that with aggregation or weak stabilization $\beta \lesssim h^{-1}$, the continuity follows directly from the Cauchy-Schwarz inequality together with the observation that $\partial_n v \in L^2(\partial \Omega_D)$ for $v \in V_{h,\bigstar} \oplus H^2(\Omega)$ and therefore $\| \partial_n v \|_{\partial \Omega_D}$ is well defined. For the coercivity we use the element-wise trace inverse inequality \cite{Warburton2003}
\begin{equation}
h \| \partial_n v \|^2_{T \cap \partial \Omega_D} \lesssim \| \nabla v \|^2_T \qquad v \in \mathcal{P}_p(T)
\end{equation}
and \eqref{eq:stab} to conclude that 
\begin{align}\label{eq:coer-b}
\| h^{\frac{1}{2}} \partial_n v_h \|^2_{\partial \Omega_D} 
\lesssim
\sum_{T \in \mcThcut} \| \nabla v_h \|^2_T  \lesssim 
\sum_{T \in \mcTh} \| \nabla v_h \|^2_T  \lesssim 
\| \nabla v_h \|^2_{\Omega_h} \lesssim \| v_h \|^2_{a_\Omega} \qquad v_h \in V_{h,\bigstar}
\end{align}
where $\mcThcut$ is the set of elements that intersect the boundary.
Next, for any $\delta>0$, we have
\begin{align}
2(\partial_n v, v)_{\partial \Omega_D} 
\leq 2 \| \partial_n v \|_{\partial \Omega_D} \| v \|_{\partial \Omega_D} 
\leq \| (h\delta)^{\frac{1}{2}} \partial_n v \|^2_{\partial \Omega_D} + \| (h\delta)^{-\frac{1}{2}} v \|^2_{\partial \Omega_D} 
\end{align}
which leads to
\begin{equation}
\begin{aligned}
a_{h,\bigstar}(v_h,v_h) &= \| v_h \|^2_{a_\Omega} - 2 (\partial_n v_h, v_h)_{\partial \Omega_D} + \| \beta^{\frac{1}{2}} v_h \|^2_{\partial \Omega_D}
\\
&\geq  \| v_h \|^2_{a_\Omega} - \| (h\delta)^{\frac{1}{2}} \partial_n v_h\|^2_{\partial \Omega_D}  
- \| (h\delta)^{-\frac{1}{2}} v_h \|^2 _{\partial \Omega_D} + \| \beta^{\frac{1}{2}} v \|^2_{\partial \Omega_D}
\\
&\geq  ( 1 - \delta C )  \| v_h \|^2_{a_\Omega} 
+ \| (\beta - (h\delta)^{-1})^{\frac{1}{2}} v_h \|^2_{\partial \Omega_D}  \qquad v_h \in V_{h,\bigstar}
\end{aligned}
\end{equation} 
where $C$ is the hidden constant between the first and final expressions in~\eqref{eq:coer-b}. Taking $\delta$ small enough to guarantee that $\delta C \lesssim 1/2$ and $\beta$ large enough such that $\beta - (h\delta)^{-1} > h^{-1}/2$ on each element in $\mcThcut$, the coercivity follows.
\end{proof}

Assuming that $l_h(v_h)$ is continuous, \emph{i.e.}, $l_h(v_h) \lesssim \tn v_h \tn_{h,\bigstar}$ for $v_h \in V_{h,\bigstar}$, we may now apply the Lax--Milgram lemma to show that there is a unique and stable solution $u_h \in V_{h,\bigstar}$ to~\eqref{eq:stab-nitsche}.

\subsubsection*{Error Estimates} 
Conditions~\eqref{eq:stab}--\eqref{eq:approxsh} ensure that the corresponding immersed finite-element approximation according to~\eqref{eq:standardnitsche} has optimal approximation properties in the energy norm.
\begin{thm}
Consider the immersed finite-element approximation~\eqref{eq:standardnitsche}. Assume that conditions~\eqref{eq:stab}--\eqref{eq:approxsh} hold. The approximate solution~$u_h$ satisfies the energy-norm error estimate
\begin{equation}
\tn u - u_h \tn_h \lesssim h^p \| u \|_{H^{p+1}(\Omega)} 
\label{eq:opt_bound}
\end{equation}
\end{thm}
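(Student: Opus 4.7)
My plan is a standard Strang-type argument, adapted to accommodate the difference between the discrete bilinear form $a_{h,\bigstar}$ and the consistent form $a_h$. I first split via the triangle inequality
\begin{equation*}
\tn u - u_h \tn_h \leq \tn u - \pi_h u \tn_h + \tn \pi_h u - u_h \tn_h,
\end{equation*}
and note that since $a_\Omega(v_h,v_h) \geq \|\nabla v_h\|_\Omega^2$ on $V_{h,\bigstar}$, we have $\tn v_h \tn_h \leq \tn v_h \tn_{h,\bigstar}$, so it suffices to estimate the discrete error $e_h := \pi_h u - u_h \in V_{h,\bigstar}$ in the stronger $\tn \cdot \tn_{h,\bigstar}$ norm.

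For the interpolation error $\tn u - \pi_h u \tn_h$ the bulk gradient term follows directly from \eqref{eq:approx} with $s = p+1$, $m = 1$. The two boundary terms are controlled by combining \eqref{eq:approx} with a trimmed-element trace inequality $\| v \|_{\partial \Omega_D \cap T}^2 \lesssim h^{-1}\|v\|_T^2 + h \|\nabla v\|_T^2$ applied to $v = u - \pi_h u$ and to $\partial_n(u - \pi_h u)$, summed over $T \in \mcThcut$; after using \eqref{eq:approx} with $m = 0,1,2$ this produces the required $O(h^p)\|u\|_{H^{p+1}(\Omega)}$ bound. The constants in this trace inequality are uniformly controlled because of the shape regularity of the background mesh, which is exactly the regime in which the extended stability \eqref{eq:stab} is effective.

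For the discrete part, coercivity \eqref{eq:coer} gives $\tn e_h \tn_{h,\bigstar}^2 \lesssim a_{h,\bigstar}(e_h, e_h)$. Writing $a_{h,\bigstar} = a_h + s_h$ (with $s_h \equiv 0$ in the aggregation case) and using the consistency $a_h(u, e_h) = l_h(e_h)$ of \eqref{eq:consistency} yields the key identity
\begin{equation*}
a_{h,\bigstar}(e_h, e_h) = a_{h,\bigstar}(\pi_h u, e_h) - l_h(e_h) = a_h(\pi_h u - u, e_h) + s_h(\pi_h u, e_h).
\end{equation*}
The first term on the right is bounded by a direct continuity estimate for $a_h$ in the $\tn \cdot \tn_h$ norm (Cauchy--Schwarz termwise, using $\beta \lesssim h^{-1}$) together with the interpolation bound of the previous paragraph, giving $|a_h(\pi_h u - u, e_h)| \lesssim h^p \|u\|_{H^{p+1}(\Omega)} \tn e_h \tn_{h,\bigstar}$. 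The second term is handled by Cauchy--Schwarz on the symmetric form $s_h$ together with the stabilization consistency \eqref{eq:approxsh}: $s_h(\pi_h u, e_h) \leq \|\pi_h u\|_{s_h} \|e_h\|_{s_h} \lesssim h^p \|u\|_{H^{p+1}(\Omega)} \tn e_h \tn_{h,\bigstar}$. Dividing by $\tn e_h \tn_{h,\bigstar}$ and combining with the interpolation estimate gives \eqref{eq:opt_bound}.

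The main subtlety, and the reason for the specific formulation of \eqref{eq:approxsh}, is that $s_h(u, \cdot)$ is generally not well defined for the exact solution (it may involve normal derivatives beyond $H^{p+1}(\Omega)$ regularity). The rearrangement above is designed so that $s_h$ is only ever evaluated on arguments in $V_{h,\bigstar}$, so that \eqref{eq:approxsh} is directly applicable; this is really the only nontrivial obstacle, since everything else is routine Cauchy--Schwarz plus interpolation. The other place where care is needed, namely the trace inequalities on the cut Dirichlet boundary, is exactly what the extended stability \eqref{eq:stab} of aggregated and ghost-penalty-stabilized methods makes uniformly controllable in $h$.
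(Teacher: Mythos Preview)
Your proof is correct and follows essentially the same route as the paper: triangle-inequality split, interpolation estimate via the trace inequality $\|v\|_{T\cap\partial\Omega_D}^2 \lesssim h^{-1}\|v\|_T^2 + h\|\nabla v\|_T^2$ on full background elements combined with \eqref{eq:approx}, and for the discrete part the identity $a_{h,\bigstar}(e_h,e_h) = a_h(\pi_h u - u, e_h) + s_h(\pi_h u, e_h)$ followed by continuity and \eqref{eq:approxsh}. One small misattribution: the uniformity of the trace constants in the interpolation step is \emph{not} a consequence of the extended stability \eqref{eq:stab}; it holds because the trace is taken to the \emph{full} background element $T$ (where the extended interpolation error lives), which is a standard cut-element trace lemma independent of stabilization. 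Property \eqref{eq:stab} enters only indirectly, through the coercivity \eqref{eq:coer} that you invoke for the discrete part.
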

\begin{proof}
We start by splitting the error in two parts using the interpolation operator
\begin{align}\label{eq:errorsplit}
\tn u - u_h \tn_{h} \lesssim  \tn u - \pi_h u \tn_{h}  
+ \tn \pi_h u - u_h \tn_h 
\end{align}
We first consider the first term and show that
\begin{equation}\label{eq:interpol-energy}
 \tn u - \pi_h u \tn_h \lesssim h^p \| u \|_{H^{p+1}(\Omega)}
\end{equation}
To establish~\eqref{eq:interpol-energy}, we note that by virtue of a standard trace inequality (see, \emph{e.g.}, \cite{BrennerScott2008})  
\begin{equation}
\| v \|^2_{T \cap \partial \Omega_D} \lesssim h^{-1} \| v \|^2_T  + h \| \nabla v \|^2_T \qquad v \in H^1(T)
\end{equation}
it holds that 
\begin{equation}\label{eq:part1bound}
\begin{aligned}
\tn v \tn^2_h &= \| \nabla v \|^2_{\Omega}+ \| h^{-\frac{1}{2}} v \|^2_{\partial \Omega_D} + \| h^{\frac{1}{2}} \partial_n v \|^2_{\partial \Omega_D} 
\\
& \lesssim \| \nabla v \|^2_{\Omega_h} + \| h^{-1} v \|^2_{\Omega_h} + \| h \nabla^2 v \|^2_{\Omega_h}
\end{aligned}
\end{equation}
for $v \in V_{h,\bigstar} + H^2(\Omega)$.
Setting $v = u -\pi_h u$ and using (\ref{eq:approx}), we obtain (\ref{eq:interpol-energy}). Equation \eqref{eq:part1bound} applies the definition of $\tn \cdot \tn^2_h$ given in \eqref{eq:energynorm}.

Considering the second term in (\ref{eq:errorsplit}), we note that the definition of $\tn \cdot \tn_{h,\bigstar}$ and the coercivity 
property~(\ref{eq:coer}) imply
\begin{align}\label{eq:energy-d}
\tn  \pi_h  u - u_h \tn_h \lesssim \tn  \pi_h u  - u_h \tn_{h,\bigstar}  \lesssim \sup_{ w_h \in V_{h,\bigstar}\setminus\{0\}}\frac{a_{h,\bigstar}(\pi_h  u - u_h, w_h) }{\tn w_h \tn_{h,\bigstar}}
\end{align}
In the case of the aggregation method, we have $a_{h,\bigstar}(\cdot,\cdot) = a_h(\cdot,\cdot)$ and 
$\tn \cdot \tn_{h,\bigstar} = \tn \cdot \tn_{h}$ and it immediately follows that
\begin{equation}
\begin{aligned}
a_{h,\bigstar}( \pi_h u - u_h,w_h) &= a_h( \pi_h u - u_h ,w_h) 
\\
&= a_h( \pi_h u  ,w_h)  - l_h(w_h) 
\\
&= a_h(\pi_h u - u ,w_h) 
\\
&\lesssim \tn \pi_h u - u \tn_h \tn w_h \tn_h
\\ \label{eq:energy-e}
&\lesssim h^p \| u \|_{H^{p+1}(\Omega)} \tn w_h \tn_h
\end{aligned}
\end{equation}
where we used, consecutively,  the consistency relation (\ref{eq:consistency}) to replace $l_h(w_h)$ by $a_h(u,w_h)$,  
continuity of $a_h$ according to~(\ref{eq:cont}), and the interpolation error estimate~(\ref{eq:interpol-energy}). In the case of ghost-penalty stabilization, it holds that
\begin{equation}
\begin{aligned}
a_{h,\bigstar}( \pi_h u - u_h,w_h) 
&= a_{h,\bigstar}( \pi_h u  ,w_h)  - l_h(w_h) 
\\
&= a_h( \pi_h u  ,w_h)  - l_h(w_h) + s_h(\pi_h u,w_h)  
\\
&= a_h(\pi_h u - u ,w_h) + s_h(\pi_h u, w_h) 
\\
&\lesssim \tn \pi_h u - u \tn_h \tn w_h \tn_h + \| \pi_h u \|_{s_h} \| w_h \|_{s_h}
\\
&\lesssim \Big( \tn \pi_h u - u \tn^2_h  + \| \pi_h u \|^2_{s_h} \Big)^{1/2} \tn w_h \tn_{h,\bigstar}
\\ \label{eq:energy-f}
& \lesssim h^p \| u \|_{H^{p+1}(\Omega)} \tn w_h \tn_{h,\bigstar}
\end{aligned}
\end{equation}
where we used the interpolation error estimate~(\ref{eq:interpol-energy}) and 
the weak consistency condition~(\ref{eq:approxsh}) on $s_h(\cdot,\cdot)$. 
From (\ref{eq:energy-d})--(\ref{eq:energy-f}) it follows that the second term in~\eqref{eq:errorsplit} is also bounded
in accordance with~\eqref{eq:opt_bound}.
\end{proof}

\clearpage

\subsubsection*{Condition number estimate} 
Under certain (non-restrictive) conditions, both aggregation and ghost-penalty stabilization engender stiffness matrices that exhibit similar condition numbers as conventional boundary-fitted finite element methods. To analyze the condition numbers corresponding to both methods, we consider the basis $\{\phi_i\}_{i=1}^N$ in $V_{h,\bigstar}$, so that any $v_h\in{}V_{h,\bigstar}$ can be expanded as
\begin{align}
v_h = \sum_{i=1}^N y_i \phi_i
\end{align}
where $\mathbf{y} \in \mathbb{R}^N$ is the coefficient vector. It is to be noted that with ghost-penalty stabilization the basis $V_{h,\bigstar} = \Vh$ is a standard nodal basis, while with aggregation $V_{h,\bigstar} = \Vhag$ is the basis that is modified at the boundary in such a way that the support of each basis function has a sufficiently large intersection with $\Omega$, more precisely $|\text{supp}(\phi_i) \cap \Omega| \sim h^d$. The stiffness matrix 
$\mathbf{A}\in \mathbb{R}^{N\times N}$ is defined as
\begin{equation}\label{eq:stiffmat}
\mathbf{z}^T \mathbf{A} \mathbf{y} = a_{h,\bigstar} (w_h,v_h) \qquad w_h,v_h \in V_{h,\bigstar}
\end{equation}
and for the considered symmetric bilinear forms, the condition number of the stiffness matrix $\mathbf{A}$ corresponds to
\begin{equation}
\label{eq:kappaA}
\kappa\left( \mathbf{A} \right) = \frac{\lambda_{\max}}{\lambda_{\min}}
\end{equation}
where $\lambda_{\max}$ and $\lambda_{\min}$ are the maximum and minimum eigenvalues 
of $\mathbf{A}$, respectively.  To estimate the condition number we start from the following basic bounds, which hold by construction for both aggregated and weakly stabilized methods:
\begin{itemize}
\item There is a constant (hidden in the binary relation $\lesssim$) such that 
\begin{align}\label{eq:cond-inv}
\tn v_h \tn_{h,\bigstar} \lesssim h^{-1} \| v_h \|_{\Omega_h} \qquad v_h \in V_{h,\bigstar}
\end{align}
Both for weakly stabilized and aggregated methods, this bound follows from standard inverse inequalities; see \eqref{eq:ghost-inverse} for the inverse estimate of the ghost-penalty form.

\item There is a (hidden) constant such that 
\begin{align}\label{eq:cond-poin}
\| v_h \|_{\Omega_h} \lesssim \tn v_h \tn_{h,\bigstar} \qquad v_h \in V_{h,\bigstar}
\end{align}
This is a Poincar\'e inequality, requiring aggregation of the approximation space or ghost-penalty stabilization of the bilinear form to gain control on the approximate solution outside~$\Omega$.

\item There are constants such that the following equivalence holds:
\begin{align}\label{eq:Rneqv}
h^d \| \mathbf{y} \|^2_2 \sim \| v_h \|^2_{\Omega_h} \qquad v_h \in V_{h,\bigstar}
\end{align}
For weakly stabilized methods we have  $V_{h,\bigstar} = \Vh$ and this is a standard estimate that follows 
from the quasi-uniformity of the mesh and the use of a nodal basis. For the aggregated finite 
element space $V_{h,\bigstar} = \Vhag$, this estimate holds by virtue of the construction of the 
extension operator.
\end{itemize}
\begin{thm} Consider the stiffness matrix in~\eqref{eq:stiffmat}. Assume that the bilinear form $a_{h,\bigstar}(\cdot,\cdot)$ is coercive according to~\eqref{eq:coer} and that conditions~\eqref{eq:cond-inv}--\eqref{eq:Rneqv} hold. The condition number~$\kappa\left( \mathbf{A} \right)$ of the stiffness matrix satisfies the following bound
\begin{equation}
\kappa\left( \mathbf{A} \right) \lesssim h^{-2}
\end{equation}
\end{thm}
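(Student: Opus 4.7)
The strategy is the standard two-sided Rayleigh quotient argument, where the three auxiliary estimates~\eqref{eq:cond-inv}, \eqref{eq:cond-poin}, and~\eqref{eq:Rneqv} act as the three bridges between the matrix-vector world ($\|\mathbf{y}\|_2$), the continuous function-space world ($\|v_h\|_{\Omega_h}$), and the operator-norm world ($\tn v_h\tn_{h,\bigstar}$). First I would express, for any coefficient vector $\mathbf{y}\in\mathbb{R}^N$ with corresponding function $v_h=\sum_i y_i\phi_i\in V_{h,\bigstar}$, the Rayleigh-quotient identity
\begin{equation*}
\frac{\mathbf{y}^T\mathbf{A}\mathbf{y}}{\mathbf{y}^T\mathbf{y}}=\frac{a_{h,\bigstar}(v_h,v_h)}{\|\mathbf{y}\|_2^2}
\end{equation*}
so that $\lambda_{\max}$ and $\lambda_{\min}$ are obtained by maximizing and minimizing this quotient over $V_{h,\bigstar}\setminus\{0\}$.

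For the upper bound on $\lambda_{\max}$, I would chain the inequalities top-down. Continuity~\eqref{eq:cont} applied with $w=v=v_h$ gives $a_{h,\bigstar}(v_h,v_h)\lesssim\tn v_h\tn_{h,\bigstar}^2$. The inverse estimate~\eqref{eq:cond-inv} then yields $\tn v_h\tn_{h,\bigstar}^2\lesssim h^{-2}\|v_h\|_{\Omega_h}^2$, and the norm equivalence~\eqref{eq:Rneqv} converts the $L^2$-norm on $\Omega_h$ into $h^d\|\mathbf{y}\|_2^2$. Combining these three estimates gives
\begin{equation*}
\lambda_{\max}\lesssim h^{d-2}
\end{equation*}

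For the lower bound on $\lambda_{\min}$ I would chain the same three estimates in the opposite direction. Coercivity~\eqref{eq:coer} gives $\tn v_h\tn_{h,\bigstar}^2\lesssim a_{h,\bigstar}(v_h,v_h)$, the Poincar\'e-type estimate~\eqref{eq:cond-poin} gives $\|v_h\|_{\Omega_h}^2\lesssim\tn v_h\tn_{h,\bigstar}^2$, and the lower half of~\eqref{eq:Rneqv} gives $h^d\|\mathbf{y}\|_2^2\lesssim\|v_h\|_{\Omega_h}^2$. Concatenating,
\begin{equation*}
h^d\|\mathbf{y}\|_2^2\lesssim a_{h,\bigstar}(v_h,v_h),\qquad\text{hence}\qquad\lambda_{\min}\gtrsim h^d
\end{equation*}
Dividing the two bounds and recalling~\eqref{eq:kappaA} then produces $\kappa(\mathbf{A})\lesssim h^{-2}$, which is the claim.

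The only real conceptual point in the proof is that the same triple of inequalities~\eqref{eq:cond-inv}--\eqref{eq:Rneqv} is used twice but traversed in opposite directions, so I would present the two chains as a transparent diagram and then simply read the condition number off as the ratio of the two terminal bounds. The main obstacle is not algebraic but structural: one has to be careful that the Poincar\'e estimate~\eqref{eq:cond-poin} genuinely controls $\|v_h\|_{\Omega_h}$ (and not merely $\|v_h\|_\Omega$), which is precisely the added value of aggregation or of the ghost-penalty contribution via~\eqref{eq:stab}, and that the norm equivalence~\eqref{eq:Rneqv} holds with constants independent of the cut configuration. Both of these are precisely the reasons the statement is restricted to the stabilized setting; in the unstabilized case the estimate~\eqref{eq:cond-poin} degenerates on small cut elements and the argument collapses, consistent with the scaling $\kappa(\mathbf{A})\gtrsim\eta^{-(2p+1-2/d)}$ recorded in~\eqref{eq:scaling}.
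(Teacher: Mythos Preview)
Your proposal is correct and follows essentially the same route as the paper: both use the Rayleigh-quotient characterization of $\lambda_{\max}$ and $\lambda_{\min}$, chain continuity~\eqref{eq:cont} with the inverse estimate~\eqref{eq:cond-inv} and the norm equivalence~\eqref{eq:Rneqv} to obtain $\lambda_{\max}\lesssim h^{d-2}$, and chain coercivity~\eqref{eq:coer} with the Poincar\'e estimate~\eqref{eq:cond-poin} and~\eqref{eq:Rneqv} to obtain $\lambda_{\min}\gtrsim h^d$, then divide. Your closing remark on why~\eqref{eq:cond-poin} fails without stabilization is also consistent with the paper's discussion.
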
 

\begin{proof} To estimate the condition number according to~\eqref{eq:kappaA}, we use the following Rayleigh quotient characterization of the eigenvalues to bound the maximum (resp. minimum) eigenvalue from above (resp. below):
\begin{align}
\lambda_{\max} = \max_{\mathbf{y} \in \mathbb{R}^N\setminus \{0\}} 
\frac{\mathbf{y}^T \mathbf{A} \mathbf{y}}{ \| \mathbf{y} \|^2_2} , \qquad 
\lambda_{\min} = \min_{\mathbf{y} \in \mathbb{R}^N\setminus \{0\}} 
\frac{\mathbf{y}^T \mathbf{A} \mathbf{y}}{ \| \mathbf{y} \|^2_2} 
\end{align}
We bound the Rayleigh quotient from above as follows
\vspace{.5mm}
\begin{align}
\frac{\mathbf{y}^T \mathbf{A} \mathbf{y}}{ \| \mathbf{y} \|^2_2} 
\sim \frac{a_{h,\bigstar}(v_h,v_h)}{ h^{-d} \| v_h \|^2_{\Omega_h}} 
= h^d \frac{a_{h,\bigstar}(v_h,v_h)}{\tn v_h \tn^2_{h,\bigstar}} \frac{\tn v_h \tn^2_{h,\bigstar}}{\| v_h \|^2_{\Omega_h}} 
\lesssim h^{d-2}
\end{align}
\vspace{.5mm}
where we used, consecutively, the definition (\ref{eq:stiffmat}) of the stiffness matrix,  the equivalence 
(\ref{eq:Rneqv}), and continuity of $a_{h,\bigstar}(\cdot,\cdot)$ according to~(\ref{eq:cont}) and the inverse inequality (\ref{eq:cond-inv}). 
Conversely, to bound the quotient from below, we infer from the equivalence relation~\eqref{eq:Rneqv}, coercivity condition~(\ref{eq:coer}), and the Poincar\'e inequality~(\ref{eq:cond-poin}) that 
\vspace{.5mm}
\begin{align}
\frac{\mathbf{y}^T \mathbf{A} \mathbf{y}}{ \| \mathbf{y} \|^2_2} 
\gtrsim 
\frac{\tn v_h \tn^2_{h,\bigstar}}{ h^{-d} \| v_h \|^2_{\Omega_h}}
\gtrsim 
h^d
\end{align}
\vspace{.5mm}
Combining the upper and lower bounds, we obtain the desired estimate
\vspace{.5mm}
\begin{equation}
\kappa = \frac{\lambda_{\max}}{\lambda_{\min}} \lesssim \frac{h^{d-2}}{h^d} \lesssim h^{-2}
\end{equation}
\end{proof}
\begin{remark}\label{rem:comparison}\textbf{Comparison to Nitsche's method without stabilization.}
We here briefly discuss which aspects of Section~\ref{sec:stabilityAnalysis} do, and do not, apply when Nitsche's method is applied in an immersed setting without an additional stability-enhancing technique. First and foremost, the intrinsic property \eqref{eq:stab} will not be satisfied. Consequently, the normal gradients on the boundary are not sufficiently controlled by the gradients in the bulk, such that coercivity only holds with locally very large (\emph{i.e.}, \emph{a priori} unbounded) values for the Nitsche parameter $\beta$. Nevertheless, with the Nitsche parameter large enough, in the $\beta$-norm both \eqref{eq:cont} and \eqref{eq:coer} hold (\emph{i.e.}, respectively continuity and coercivity), and only the continuous dependence on the data in the Lax--Milgram lemma can formally not be verified as the right-hand-side operator $l_h(\cdot)$ is not bounded in the $\beta$-norm such that $\tn u_h \tn_\beta$ cannot be \emph{a priori} bounded. The error estimate provided in \eqref{eq:opt_bound} does not hold for unstabilized formulations in combination with the $\beta$-norm, as $\beta \sim h_{T_\Omega}^{-1} \gtrsim h^{-1}$ such that \eqref{eq:part1bound}, and consequently \eqref{eq:interpol-energy}, do not apply. Note that \eqref{eq:interpol-energy} is also employed in the proof of the bound on the second part of the right-hand-side of \eqref{eq:errorsplit}. As discussed in Section~\ref{sec:introSec4}, this conclusion is also obtained in \cite{Prenter2018}, which shows that a best approximation property in the $\beta$-norm can be demonstrated following C\'{e}a's lemma, but that the problem lies in the error estimates because a bound on the error of the best approximation in the $\beta$-norm cannot be provided. As already demonstrated in Section~\ref{sec:conditioningAnalysis}, condition number bounds cannot be provided and tailored preconditioning is required. Related to the analysis presented in this section, both \eqref{eq:cond-inv} and \eqref{eq:cond-poin} are contingent on ghost-penalty stabilization or aggregation. For \eqref{eq:cond-inv} the bound cannot be provided as there is no \emph{a priori} upper bound on $\beta$. Nevertheless, this lack of an upper bound to the eigenvalues is generally not the cause of the conditioning problems of these methods, and this type of large eigenvalues only occurs on highly irregularly shaped cut elements. The main problem with the conditioning is that without additional stabilization, \eqref{eq:cond-poin} does not hold, and as demonstrated in Section~\ref{sec:conditioningAnalysis} it is this lack of a lower bound to the eigenvalues that generally causes the ill-conditioning of unstabilized immersed methods.\end{remark}

% discussion
\section{Discussion and conclusion}\label{sec:discussion}

In this review, we considered the problems resulting from small cut elements in immersed finite element methods, as well as the solutions that have been developed to counter these problems. This section assesses the current state of the field and presents a discussion about future research directions. Similarities and differences between the different techniques that are discussed in this manuscript are treated in the respective sections, and we do not provide a qualitative comparison here. This is because, first, the presented techniques pursue different goals related to the different problems on small cut elements, \emph{i.e.}, stability and conditioning. Second, the approaches are not mutually exclusive and combinations can be made. In Section~\ref{sec:stabilization} the combination of aggregation and ghost-penalty stabilization is already discussed, and similarly it is possible to combine it with, \emph{e.g.}, the minimal stabilization procedure, which fixes the stability but not the conditioning, with a preconditioning approach.

The field of immersed finite element methods has substantially grown and developed over the past decade. The employed formulations have matured, and are currently based on rigorous theoretical and mathematical foundations. Furthermore, efficient parallel implementations have been developed that enable the application to large problems into the billions of degrees of freedom. Certain aspects are still challenging, however, which particularly manifest in the application to real-world problems. These challenging aspects, and the corresponding unresolved questions, form the basis of future research directions that we believe are important for the further advancement of the field of immersed finite element methods. In this section we discuss these open research questions in a general sense, and indicate it if these are specifically related to one of the techniques that are discussed in this manuscript.
    
One aspect that requires further consideration to advance the field of immersed finite element methods, is the detection of small features in the geometry or the solution. The decoupling of the geometry description from the finite element mesh makes the consideration of adaptive discretizations natural to immersed formulations. In this regard, it is notable that grid refinements in the vicinity of domain boundaries are commonly considered in the $hp$-adaptive finite cell method \cite{Zander2015,Angella2016,Duester2017,Elhaddad2017}. Such refinements are not required along all boundaries, but mainly in regions where small solution features such as large gradients are expected. For efficiency purposes, we therefore consider the further advancement of error estimators and error-estimation-based adaptivity schemes tailored to immersed formulations as presented in \cite{SaiEEA} a desired development. One complication in regard of \emph{a posteriori} error estimates is that the bilinear form is only coercive on discrete spaces, and that the dependence of the bilinear form on the discretization (both on the mesh size and on the order) complicates saturation assumptions. Besides the accuracy considerations related to enriched approximation spaces along small geometrical features, stabilized methods also require a certain level of geometry resolution, which makes local adaptive schemes well suited for the application of these techniques to problems on (scan-based) porous domains, as the refinements reduce the ratio of elements that are cut. Since element aggregation strongly ties degrees of freedom on badly cut elements to well posed degrees of freedom, local refinement procedures can be beneficial in cases where a large portion (or in certain regions even the majority) of the active mesh consists of badly cut elements. This is similar for ghost-penalty stabilization terms on domains with (relative to the mesh) thin structures, which can potentially make the domain artificially stiff in such cases. Note that in these cases the choice of the parameters $\eta^*$ (with aggregation; see Section~\ref{sec:aggregation}) and $\tau$ (with ghost-penalty stabilization; see Section~\ref{sec:ghost}) is particularly critical, and that in many cases problems can be avoided by a adequate parameter choice. Nevertheless, the accurate approximation of problems with very thin features in an unfitted setting remains a difficult problem for any method, and deserves further attention. Schwarz preconditioners are not affected by the matter of mesh resolution, but it should be noted that this only pertains to the solving of the linear system. With a natural boundary condition on the unfitted boundary and a preconditioner to resolve the conditioning problems, a relatively low level of mesh resolvement can therefore lead to reasonable solutions. It should be mentioned that this can still result in artificial couplings as described in \emph{e.g.}, \cite{Verhoosel2015,Prenter2020}, however. With essential boundary conditions on the unfitted boundary, such as with flow problems in porous media, the application of a preconditioner without an additional stabilization technique will lead to inaccurate approximations, as described in Section~\ref{sec:stabilization}.

While this manuscript focuses on single-field problems, mixed formulations, in particular flow problems, are an important application area of immersed finite element methods. The techniques presented in this manuscript are all applicable to such problems, and can retain or precondition inf-sup stable formulations (see, \emph{e.g.}, \cite{Badia2018b} for aggregation, \cite{Massing2014,Hoang2019} for ghost penalty, and \cite{Prenter2019} for preconditioning). An unresolved topic is the application of (pointwise) divergence-conforming or geometry-preserving discretizations for incompressible problems. Compatible pairs of approximation spaces that result in pointwise divergence-free approximations for boundary-fitted discretizations, generally lose this property in an unfitted setting. Furthermore, stabilization techniques complicate the formulation of pointwise divergence-free methods. Ghost-penalty terms interfere with the enforcement of incompressibility in the bilinear form, and it has not been thoroughly investigated how the existing discrete aggregation and extension operators affect the compatibility of structure-preserving pairs of approximation spaces. Besides these comments about $H$(div)-conforming discretizations for the conservation of geometry, similar considerations can be made about $H$(curl)-conforming discretizations when solving the Maxwell equations in electromagnetism.

An aspect that relates to both of the previous topics of mesh adaptivity and flow problems is the consideration of anisotropic grid refinements. To accurately capture boundary layers at a reasonable computational cost, boundary layer meshes are commonly anisotropic with a much finer resolution normal to the boundary. While the aspect of boundary layers in immersed methods has already been considered in multiple studies (see, \emph{e.g.}, \cite{Gerstenberger2010,Xu2016}), present understanding of this aspect is incomplete, and this is a topic that requires further investigation.

While only briefly considered in Section~\ref{sec:introStabilityConditioning}, a topic that is strongly related to small cut elements is explicit time integration for problems in dynamics. This is particularly interesting for crash simulations, as such problems are commonly solved by explicit time steppers \cite{Leidinger2020}. Although tools exist that result in stable time steps that are not affected by the cut elements (\emph{i.e.}, element aggregation, ghost penalty terms in the mass matrix, or lumping of the mass matrix \cite{Leidinger2019,Leidinger2020,burman2020explicit}), the research on this topic is not exhaustive. In particular, a comprehensive analysis of stable time-step sizes with different choices for the mass and stiffness forms (\emph{i.e.}, boundary condition enforcement technique, ghost-penalty stabilization of both forms, lumping of the mass matrix, etc.) does, to the authors' knowledge, not yet exist.

Another open question deals with the choice, or the optimization, of parameters in the presented approaches. This does not only concern, \emph{e.g.}, the parameters $\eta^*$ and $\tau$ that are explicitly contained in element aggregation and the ghost penalty, but also hidden parameters (or choices) in the setup such as the choice of the index blocks with Schwarz preconditioning. While these parameters generally do not affect the asymptotic behavior, well-chosen (or, conversely, poorly-chosen) parameters can significantly affect the performance pre-asymptotically. We therefore consider an optimal choice of (hidden) parameters an important topic for future research. This specifically applies to the dependence of this optimality on the basis-function type and the discretization order and, in particular for discretizations with local refinements or local order elevations, to the option of choosing parameters locally or element-wise (similar to the Nitsche parameter without stabilization). 

Over the past years, many well-developed, well-documented, and user-friendly open source codes have become available that aid the application of immersed finite element methods, a comprehensive list of which is beyond the scope of this work. These toolboxes support all facets of immersed finite element computations, such as pre-processing steps (in particular integration and the application of boundary conditions), the solution process itself, and also post-processing, where often a subtriangulation is employed like this is also done for higher-order boundary-fitted finite elements. Something that is not yet available is a graphical user interface that makes immersed finite element methods accessible also to non-expert users. Besides the developments in open source codes, also commercial software has adopted (aspects of) immersed methods. To provide some examples:
in Ansys Fluent the Immersed Boundary Method (IBM) is available \cite{Mittal2005,Wolfe2009};
Ansys LS-DYNA facilitates the application of trimmed isogeometric analysis in both shell and solid models for static and dynamic analyses \cite{Hartmann2019,Messmer2021};
in Abaqus it is possible to (partially) include and exclude elements from a computation with \emph{element progressive activation} \cite{Favaloro2017,Courter2017}, which is mainly intended for the simulation of additive manufacturing;
and the platform Hyperganic \cite{Hyperganic} employs immersed finite element techniques to perform simulations. The implementation of the stabilization methods discussed in this work can require tools that are not generally available in existing software and codes. For instance, face-based ghost-penalty stabilization requires the computation of jumps in normal gradients up to the order of approximation. For quadratic and higher-order discretizations, this functionality is generally not available, even for software frameworks that include the possibility to perform face loops and compute jumps for standard Discontinuous Galerkin (DG) formulations. The implementation of patch-based ghost-penalty stabilization or aggregation require the implementation of an aggregation routine and the computation of aggregate-wise terms. These terms can be implemented using standard element-wise terms if the finite element software provides functionality to enforce linear constraints. Many codes already provide this functionality as this is essential for local grid refinements and periodic boundary conditions (\emph{e.g.}, deal.ii \cite{Bangerth2007a}, fenics \cite{Alnaes2015}, gridap \cite{Badia2020-gridap}, fempar \cite{Badia2018-fempar}, netgen \cite{Schberl1997}, or dune \cite{Bastian2021}). While (geometric) multigrid routines are intrusive to the finite element software, the implementation of tailored Schwarz preconditioners is exceptionally non-intrusive and only requires the matrix itself and the basis function connectivity. Furthermore, as additive Schwarz preconditioning is a well-established concept also in boundary-fitted finite element methods, these preconditioners are available in many codes. The construction of the index blocks is not generally the same in existing software, however, and it should be noted that the effectivity for immersed methods relies on the specific construction of these blocks. Additionally, advanced procedures in which \emph{e.g.}, index blocks are only devised for basis functions with support on cut elements, require the availability of this information or a routine to extract it. 

By virtue of the stabilization, aggregation, and preconditioning techniques as discussed in this review, immersed finite element methods have developed into reliable and versatile computational analysis instruments. In our assessment, the potential of immersed methods and, in particular, their ability to provide new simulation workflows with minimal user intervention, has not been fully exploited yet. Another recommendation therefore pertains to the exploration of immersed methods in new application fields, such as hyperbolic systems, diffuse interface models, poro-elasticity, computational aeroacoustics, and phase field fracture.

\subsubsection*{Conflict of interest statement}
On behalf of all authors, the corresponding author states that there is no conflict of interest.

\bibliographystyle{unsrt}
\bibliography{references}

\end{document}